\newtheorem{theorem}{Theorem}[section]
\newtheorem{remark}{Remark}[section]
\newtheorem{lemma}[theorem]{Lemma}
\newtheorem{proposition}[theorem]{Proposition}
\newtheorem{define}{Definition}[section]  
\newtheorem{hypothesis}{Hypothesis}[section]
\DeclareMathOperator*{\Id}{Id}
\DeclareMathOperator*{\esssup}{esssup}
\DeclareMathOperator*{\supp}{supp}
\DeclareMathOperator*{\divergence}{div}
\begin{document}
\title[Non-uniqueness in law for the SQG equations]{Non-uniqueness in law of the surface quasi-geostrophic equations: the case of linear multiplicative noise}

\subjclass[2010]{35A02; 35R60; 76W05}
 
\author[Kazuo Yamazaki]{Kazuo Yamazaki}  
\address{Department of Mathematics, University of Nebraska, Lincoln, 243 Avery Hall, PO Box, 880130, Lincoln, NE 68588-0130, U.S.A.; Phone: 402-473-3731; Fax: 402-472-8466}
\email{kyamazaki2@unl.edu}
\date{}
\keywords{Convex integration; Fractional Laplacian; Linear multiplicative noise; Non-uniqueness; Surface quasi-geostrophic equations.}

\begin{abstract}
The momentum formulation of the surface quasi-geostrophic equations consists of two nonlinear terms, besides the pressure term, one of which cannot be written in a divergence form. When the anti-divergence operator is applied to such nonlinear terms, in general, one cannot take advantage of the differentiation operator of order minus one unless the nonlinear terms are compactly supported away from the origin in Fourier frequency. Moreover, the two nonlinear terms of the momentum surface quasi-geostrophic equations are one derivative more singular than that of the Navier-Stokes equations. Upon employing the convex integration technique to the random partial differential equations corresponding to the momentum surface quasi-geostrophic equations forced by linear multiplicative noise, these issues create various difficulties, unseen in the deterministic scenario and even in the case the noise is additive. By making a key observation in case the solution is a shear flow and rewriting the difficult stochastic commutator error in terms of the oscillation error, we prove its non-uniqueness in law. 
\end{abstract}

\maketitle

\section{Introduction}
\subsection{Motivation}\label{Section 1.1} 
As far back as 1956, Landau and Lifschitz \cite{LL56} introduced the concept of fluctuating hydrodynamics in which fluctuations can be described by replacing a force term of the partial differential equations (PDEs) with certain random noise. Since then, the study of the stochastic Navier-Stokes equations has seen many developments (e.g. \cite{F08} and references therein). Very recently, the breakthrough technique of convex integration has led to many non-uniqueness results  in the deterministic case. Especially, the Nash-type iteration scheme seems very fit for the stochastic case, and we have seen successful convex integration schemes for various stochastic PDEs that resulted in their non-uniqueness in law. Expanding the list of equations on which convex integration may be applied is strongly desired, especially to the equations of mathematical physics such as the stochastic Yang-Mills equation; e.g. we refer to  \cite[Section 1.2]{HZZ22a}. However, many of these PDEs have nonlinear terms that are not of divergence form, and, to the best of the author's knowledge, successful convex integration schemes for equations of non-divergence form seem to be extremely limited.

In this connection, initiated by Charney \cite{C48} in 1948, the two-dimensional (2D) surface quasi-geostrophic (SQG) equations has attracted interest from many physicists and engineers due to its applications in geophysics, atmospheric sciences, and meteorology (e.g. \cite{HPGS95, MW06}). Application of convex integration on the SQG equations was considered to be especially difficult among active scalars (e.g. \cite{IV15}), and the first breakthrough approach by Buckmaster, Shkoller, and Vicol \cite{BSV19} was to consider its momentum form (see Equation \eqref{momentum}); interestingly, the nonlinear terms of the momentum SQG equations as a sum cannot be written in a divergence form in contrast to the nonlinear terms of the Navier-Stokes and the SQG equations (see Equations \eqref{SQG}-\eqref{NS}). In \cite{Y23a}, the author considered such momentum SQG equations forced by additive noise and employed convex integration to prove its non-uniqueness in law; however, due to multiple technical difficulties that arise precisely due to the nonlinear term of non-divergence form, it seemed difficult to extend its result to the case of linear multiplicative noise (see Remark \ref{Remark 1.3} for detailed explanations). In this manuscript, we overcome such difficulties and prove the non-uniqueness in law of the SQG equations in momentum formulation forced by linear multiplicative noise. 

\subsection{Review of past works}\label{Section 1.2}
We write ``$n$D'' for $n$-dimensional, denote $\partial_{t} \triangleq \frac{\partial}{\partial t}$, $\mathcal{R} = (\mathcal{R}_{1}, \mathcal{R}_{2})$ as the Riesz transform vector, and $(\Omega, \mathcal{F}, \textbf{P})$ as the standard probability space. For any $x = (x_{1}, x_{2}) \in \mathbb{R}^{2}$, we define $x^{\bot} \triangleq (-x_{2}, x_{1})$, as well as $\Lambda \triangleq (-\Delta)^{\frac{1}{2}}$ defined via Fourier transform so that $\Lambda^{\gamma} g(x) \triangleq \sum_{k\in\mathbb{Z}^{2}} \lvert k \rvert^{\gamma} \hat{g}(k) e^{ik\cdot x}$, $\gamma \in \mathbb{R}$, where $\hat{g}$ is the Fourier transform of $g$. Finally, we write $A \lesssim B$ and $A \approx B$ when there exist some universal constants $C_{1}, C_{2} \geq 0$ such that $A \leq C_{1}B$, and $C_{2} B \leq A \leq C_{1} B$, respectively. 

We let $\theta: \mathbb{R}_{+} \times \mathbb{T}^{2} \mapsto \mathbb{R}$ represent potential temperature and $\nu \geq 0$ so that the generalized SQG equations forced by $f: \mathbb{R}_{+} \times \mathbb{T}^{2} \mapsto \mathbb{R}$ can be written as 
\begin{subequations}\label{SQG}
\begin{align}
& \partial_{t} \theta + \divergence (u\theta) + \nu \Lambda^{\gamma_{1}} \theta = f,\\
& u = \Lambda^{1-\gamma_{2}} \mathcal{R}^{\bot} \theta 
\end{align}
\end{subequations}
with initial data $\theta^{\text{in}}(x) = \theta(0,x)$, where $\gamma_{1} \in (0, 2]$ and $\gamma_{2} \in [1,2]$. The diffusive term $\nu \Lambda^{\gamma_{1}} \theta$  comes from Ekman pumping at the boundary, while $f$ may represent sources or damping. 

For the purpose of comparison, let us denote the velocity and pressure fields respectively by $u: \mathbb{R}_{+} \times \mathbb{T}^{n} \mapsto \mathbb{R}^{n}$ and $p: \mathbb{R}_{+} \times \mathbb{T}^{n} \mapsto \mathbb{R}$ where $n \in \mathbb{N} \setminus \{1\}$ so that the system 
\begin{subequations}\label{NS}
\begin{align}
&\partial_{t} u + \divergence (u\otimes u) + \nabla p - \nu \Delta u = f, \\
& \nabla\cdot u = 0, 
\end{align}
\end{subequations} 
represents the Navier-Stokes and the Euler equations when $\nu > 0$, and $\nu = 0$, respectively. Such generalized SQG equations was introduced in \cite[Equation (1.3)]{K10} (see also \cite{CIW08}); hereafter, we refer to special case when $\gamma_{2} = 1$ as the SQG equations. In the following discussions, we focus on the unforced case $f \equiv 0$. 

As a pioneering work, Constantin, Majda, and Tabak \cite{CMT94} in 1994 analytically and numerically demonstrated various similarities in the structures and the properties of the solutions to the Navier-Stokes, the Euler, and the SQG equations. E.g., the generalized SQG equations has the rescaling property such that if $\theta(t,x)$ solves it, then so does $\theta_{\lambda}(t,x) = \lambda^{\gamma_{1} + \gamma_{2} - 2} \theta (\lambda^{\gamma_{1}} t, \lambda x)$ for any $\lambda \in \mathbb{R}_{+}$; moreover, its smooth solution has a uniform $L^{\infty}(\mathbb{T}^{2})$-bound by that of initial data just like the vorticity formulation of the 2D Euler equations. Many important works concerning the global regularity issue of the SQG equations appeared, most notably, \cite{CW99} in the $L^{\infty}(\mathbb{T}^{2})$-subcritical case $\gamma_{1}  >1, \gamma_{2} = 1$, \cite{CV10, CV12, KN10, KNV07} in the $L^{\infty}(\mathbb{T}^{2})$-critical case $\gamma_{1} = 1, \gamma_{2} = 1$; the global regularity issue in the $L^{\infty}(\mathbb{T}^{2})$-supercritical case $\gamma_{1} + \gamma_{2} < 2$ remains largely unknown. Concerning weak solutions, Resnick \cite[Theorem 2.1]{R95} constructed global-in-time weak solutions to the non-diffusive SQG equations starting from initial data $\theta^{\text{in}} \in L^{2}(\mathbb{T}^{2})$, and Marchand \cite[Theorem 1.1]{M08} subsequently improved it to $\theta^{\text{in}} \in L^{p}(\mathbb{T}^{2})$ for $p \in (\frac{4}{3},\infty)$. 

The mathematical theory of the stochastic SQG equations was launched by Zhu \cite{Z12a} and Zhu \cite{Z12b} who obtained many important results. We highlight those of the most relevance to our current manuscript: the existence of Markov selection if $\gamma_{1} > 0$ in \cite[Theorem 4.2.5]{Z12a}; the existence of a probabilistically weak martingale solution proven via Galerkin approximation if $\gamma_{1} > 0$ in \cite[Theorem 4.3.2]{Z12b}; the existence of a path-wise unique and consequently probabilistically strong solution starting from $\theta^{\text{in}} \in L^{p}(\mathbb{T}^{2})$ for $ p> (\gamma_{1} - \frac{1}{2})^{-1}$ where $\gamma_{1} > \frac{1}{2}$ in \cite[Theorem 4.4.4]{Z12b}. We also refer to \cite{BNSW18, RZZ15, Y22a} and references therein.  

Let us review the recent developments of the convex integration technique that has originated in the deterministic case. De Lellis and Sz$\acute{\mathrm{e}}$kelyhidi Jr. proved non-uniqueness of $n$D Euler equations for all $n \in \mathbb{N}\setminus \{1\}$ via differential inclusion approach in \cite{DS09}, and followed with a series of influential works \cite{DS10, DS13} where \cite{DS13}, for the first time, proved the negative direction of the Onsager's conjecture at the $C_{t,x}^{0}$-level. De Lellis and Sz$\acute{\mathrm{e}}$kelyhidi Jr., along with Choffrut, also provided the 2D analog of \cite{DS13} in \cite{CDS12a}.  Refinements and improvements of the convex integration technique through many works (e.g. \cite{BDIS15}) eventually led to \cite{I18} in which Isett settled the negative direction of the Onsager's conjecture in any dimension $n \geq 3$; this problem was open for more than two decades since the positive direction was solved by Constantin, E, and Titi \cite{CET94} and Eyink \cite{E94} in 1994. We mention that the applications of the convex integration technique generally become more difficult in lower dimensions, essentially due to the lack of Mikado flows in the 2D case, and it required a very recent breakthrough by Giri and Radu \cite{GR23} to prove the negative direction of the Onsager's conjecture in the 2D case. Besides Onsager's conjecture, Buckmaster and Vicol \cite{BV19a} settled Serrin's conjecture concerning whether a non-constant weak solution to the 3D Navier-Stokes equations can come to rest in finite time; non-uniqueness of the 2D Navier-Stokes equations diffused via $(-\Delta)^{m}$ for $m < 1$ was subsequently proven by Luo and Qu \cite{LQ20}. We also refer to the following works and references therein for applications of the convex integration technique with an emphasis that all of their nonlinear terms are of divergence form: \cite{LZZ22} on the magnetohydrodynamics (MHD) system; \cite{MS20} on transport-diffusion equation; \cite{BMS21} on the power-law model; \cite{LTZ20} on the Boussinesq system. We also refer to \cite{BV19b} for an excellent review. 

In direct relevance to the SQG equations, we now recall that C$\acute{\mathrm{o}}$rdoba, Faraco, and Gancedo \cite{CFG11} first applied the convex integration technique on the non-diffusive incompressible porous media equations. Subsequently, Isett and Vicol \cite{IV15} employed convex integration on a general family of active scalars; however, the odd Fourier symbol of the velocity field $u = \mathcal{R}^{\bot} \theta$ in \eqref{SQG} created technical difficulties and led to an exclusion of the SQG equations in \cite{IV15}. Several years later, the research direction of convex integration on the SQG equations  started with \cite{BSV19} by Buckmaster, Shkoller, and Vicol. Specifically, they considered the momentum SQG equations solved by $v$ such that 
\begin{equation}\label{est 1}
\theta = - \nabla^{\bot} \cdot v; 
\end{equation}
i.e., 
\begin{subequations}\label{momentum} 
\begin{align}
&\partial_{t} v + (u\cdot\nabla) v - (\nabla v)^{T} \cdot u + \nabla p + \Lambda^{\gamma_{1}} v = 0, \\
&u =\Lambda v, \hspace{3mm} \nabla\cdot v = 0, 
\end{align}
\end{subequations} 
for which due to an identity of 
\begin{equation}\label{est 38}
(u\cdot\nabla) v - (\nabla v)^{T} \cdot u = u^{\bot} (\nabla^{\bot} \cdot v),
\end{equation} 
the global-in-time weak solution $v$ such that $\lVert v(t) \rVert_{\dot{H}_{x}^{\frac{1}{2}}}^{2} \leq \lVert v^{\text{in}} \rVert_{\dot{H}_{x}^{\frac{1}{2}}}^{2}$ starting from $v(0) = v^{\text{in}} \in \dot{H}^{\frac{1}{2}}(\mathbb{T}^{2})$ was known due to \cite[Theorem 1.2 (ii)]{M08}. The authors of \cite{BSV19} were able to prescribe initial energy $e(t)$ and construct a solution $v$ to \eqref{momentum} such that $\lVert v(t) \rVert_{\dot{H}_{x}^{\frac{1}{2}}}^{2} = e(t)$ via convex integration. Subsequently, Isett and Ma \cite{IM21}, as well as Cheng, Kwon, and Li \cite{CKL20} also provided different lines of attack in applying convex integration on the SQG equations; we will briefly elaborate on the latter approach in Remark \ref{Remark 1.1}.  

Extensions of the convex integration technique to the stochastic PDEs was initiated in the works of \cite{BFH20} by Breit, Feireisl, and Hofmanov$\acute{\mathrm{a}}$, and \cite{CFF19} by Chiodaroli, Feireisl, and Flandoli  in which path-wise non-uniqueness was proven for certain stochastic Euler equations. Hofmanov$\acute{\mathrm{a}}$, Zhu, and Zhu \cite{HZZ19} were the first to proven non-uniqueness in law via convex integration and they achieved this on the 3D Navier-Stokes equations forced by noise of additive, linear multiplicative, and nonlinear types. Let us point out that due to the classical Yamada-Watanabe theorem, non-uniqueness in law implies non-uniqueness path-wise. Subsequently, we have witnessed rapid developments of convex integration to prove non-uniqueness in law for stochastic PDEs: \cite{HZZ20} on the Euler equations; \cite{RS21, Y20a, Y20c, Y21c} on the fractional Navier-Stokes equations in 2D and 3D cases; \cite{Y21a} on the Boussinesq system; \cite{Y21d} on the MHD system; \cite{KY22} on the transport-diffusion equation; \cite{LZ22} on the power-law model. We also mention that Hofmanov$\acute{\mathrm{a}}$, Zhu, and Zhu \cite{HZZ21a} were able to prescribe energy or initial data following the approach of \cite{BMS21} for the 3D Navier-Stokes equations forced by additive noise; the same authors were able to treat the case of space-time white noise in \cite{HZZ21b}. In all these works, the authors somehow transformed the stochastic PDEs to random PDEs and employed convex integration on the latter to eventually claim the non-uniqueness by transforming back to the original stochastic PDEs. In contrast, Chen, Dong, and Zhu \cite{CDZ22} employed the convex integration technique directly on the stochastic Navier-Stokes equations although after applying mathematical expectation. For more recent works on convex integration applied to stochastic PDEs, we refer to \cite{B23, HLP23, HPZZ23a, HPZZ23b, HZZ22b, HZZ23a, BJLZ23, LZ23, MS23, P23}. 

Convex integration on stochastic SQG equations has been accomplished in \cite{BLW23, HZZ22a, HLZZ23, Y23a}. First, Hofmanov$\acute{\mathrm{a}}$, Zhu, and Zhu \cite{HZZ22a} employed the convex integration scheme of \cite{CKL20} on the SQG equations forced by an additive white-in-space noise. Independently, the author \cite{Y23a} employed the convex integration scheme of \cite{BSV19} on the momentum SQG equations forced by an additive white-in-time noise. It was elaborated in \cite[Section 2.2]{Y23a} why one cannot simply transform \eqref{momentum} forced by an additive noise to a random PDE by subtracting out a stochastic Stokes equation forced by the same additive noise and employing convex integration from \cite{BSV19}; in short, the inductive estimates of \cite{BSV19} included a material derivative estimate on the Reynolds stress 
\begin{equation}\label{est 13}
\text{``} \lVert (\partial_{t} + u_{q} \cdot \nabla) \mathring{R}_{q} \rVert_{C^{0}} \leq \lambda_{q}^{2} \delta_{q}^{\frac{1}{2}} \lambda_{q+1} \delta_{q+1}\text{''}
\end{equation} 
(see \cite[Equation (3.7)]{BSV19}) and while this was indispensable, the Reynold stress in the stochastic case cannot be differentiable in time, inherently from the fact that Brownian path is $\textbf{P}$-almost surely ($\textbf{P}$-a.s.) only locally H$\ddot{\mathrm{o}}$lder continuous with any exponent in $(0,\frac{1}{2})$. Finally, very recently, Hofmanov$\acute{\mathrm{a}}$, Luo, Zhu, and Zhu \cite{HLZZ23} were able to consider the case of space-time white noise as well (see also \cite{BLW23}). 

As we discussed in Section \ref{Section 1.1}, the examples of convex integration on PDEs of non-divergence form seem to be extremely rare. To the best of the author's knowledge, the only exception is the work of \cite{VK18} in which Vo and Kim worked on deterministic conservation models that included the Burgers' equation but rewrote it in the form of Kardar-Parisi-Zhang equation in its proof, effectively proving non-uniqueness for the latter equation. Nevertheless, the non-uniqueness result from  \cite{VK18} was in the 1D, non-diffusive, and unforced case, and relied on the approach of differential inclusion from \cite{DS09} rather than Nash-type iteration, and it seems difficult to adapt to the stochastic case with diffusion in higher dimensions.  
 
In this manuscript we employ convex integration on the 2D momentum SQG equations \eqref{momentum} forced by linear multiplicative noise, specifically  
\begin{subequations}\label{est 30}
\begin{align} 
&dv + [(u\cdot\nabla) v - (\nabla v)^{T} \cdot u + \nabla p + \Lambda^{\gamma_{1}} v]dt = v dB, \label{est 30a}\\
&u = \Lambda^{2-\gamma_{2}} v,  \hspace{2mm} \nabla\cdot v = 0  \label{est 30b}
\end{align}
\end{subequations}
with initial data $v^{\text{in}}(x) = v(0,x)$, where $B$ is a $\mathbb{R}$-valued Wiener process. We consider this case $u = \Lambda^{2-\gamma_{2}} v$ for $\gamma_{2} \in [1, 2)$ (see Equation \eqref{est 315a}) for generality and possible use in future works although we must compromise to the case $\gamma_{2} = 1$ at the last step of the proof of Theorem \ref{Theorem 2.1}. Let us explain three of our main motivation aspects separately in the following remarks. 

\begin{remark}\label{Remark 1.1}
As we mentioned in Section \ref{Section 1.2}, the convex integration technique has been successfully employed on the stochastic SQG equations in \cite{BLW23, HZZ22a, HLZZ23, Y23a} and all of \cite{BLW23, HZZ22a, HZZ23a} have relied on the convex integration scheme of \cite{CKL20} that studies the formulation of the SQG equations solved by $f$ such that $\Lambda f = \theta$; cf. $-\nabla^{\bot}\cdot v = \theta$ in \eqref{est 1}. There are multiple advantages in the convex integration scheme of \cite{CKL20}. On the other hand, we recall from Section \ref{Section 1.1} the significant desire to employ convex integration on PDEs in mathematical physics that consist of nonlinear terms in non-divergence form. Because the nonlinear terms $(u\cdot\nabla) v - (\nabla v)^{T} \cdot u$ of \eqref{est 30a} are not of divergence form, the momentum SQG equations \eqref{momentum} seems to offers a rare opportunity to understand convex integration scheme on PDEs with nonlinear terms of non-divergence form. Furthermore, the convex integration scheme of \cite{BSV19} is interesting because, to the best of the authors' knowledge,  
\begin{enumerate}
\item it is the only successful attempt of convex integration on fully dissipative system with the frequency parameter that grows only exponentially rather than super-exponentially, i.e., 
\begin{quotation} 
\begin{equation}\label{est 11}
\text{``}\lambda_{q} = \lambda_{0}^{q} \text{ for some integer } \lambda_{0} \gg 1\text{''} 
\end{equation}
\end{quotation} 
on \cite[p. 1823]{BSV19},
\item while being a rare example of doing so without relying on intermittency, and consequently all the estimates are based on H$\ddot{\mathrm{o}}$lder spaces. 
\end{enumerate} 
We note that Colombo, De Lellis, and De Rosa employed convex integration to construct weak solutions to the 3D Navier-Stokes equations diffused via fractional Laplacian $(-\Delta)^{m}$ without intermittency in \cite[Corollary 2.3]{CDD18}. Nevertheless, their result required super-exponential frequency parameter $\lambda_{q} = a^{b^{q}}$ (see \cite[Equation (16)]{CDD18}) and the range of $m$ was limited to $m \in (0, \frac{1}{2})$ in which $\frac{1}{2}$ can can be understood as the natural threshold up to which $\mathcal{B} (-\Delta)^{m}$ can informally be dominated by an identity function, where $\mathcal{B}$ is the anti-divergence operator from  Lemma \ref{Inverse-divergence lemma} (see also \cite{D19}). In contrast, \cite{BSV19} was able to cover the full dissipation range of $\gamma_{1} \in [0, \frac{3}{2})$.
\end{remark}

\begin{remark}
Linear multiplicative noise is one of the most fundamental examples of random noise that appears in various real-world applications, e.g. \cite{F75} by Fleming on population models forced by such linear multiplicative noise (see also \cite{D72}).  The special regularization effect from linear multiplicative noise in general is widely established. E.g., although the 3D deterministic Euler equations does not admit a global-in-time unique solution starting from sufficiently small initial data, Kim \cite{K11} observed that certain multiplicative noise can play the role of damping and therefore PDEs forced by such random noise, even in the absence of any diffusion or damping terms, can admit a global-in-time unique solution starting from sufficiently small initial data. Buckmaster, Nahmod, Staffilani, and Widmayer \cite{BNSW18} obtained such a result for the SQG equations forced by multiplicative noise with the size condition in terms of the noise coefficient; we mention that the size condition may also be imposed on a coefficient of its nonlinear term (e.g. \cite{YM19} for the Hall-MHD system forced by L$\acute{\mathrm{e}}$vy-type linear multiplicative noise). We also refer to \cite{RZZ14} for other regularizing effects from multiplicative noise. In light of such phenomena, it is of interest to employ convex integration on various stochastic PDEs forced by linear multiplicative noise in hope to improve to general multiplicative noise in future works. 
\end{remark}

\begin{remark}\label{Remark 1.3}
In effort to employ convex integration on \eqref{est 30}, we may consider 
\begin{equation}\label{est 325} 
\Upsilon(t) \triangleq e^{B(t)} \hspace{1mm} \text{ and } \hspace{1mm} y(t) \triangleq \Upsilon^{-1} v 
\end{equation} 
which lead us from \eqref{est 30} to investigate  
\begin{align*} 
&\partial_{t} y + \frac{1}{2} y + \Upsilon [ (\Lambda^{2-\gamma_{2}} y \cdot \nabla) y - (\nabla y)^{T} \cdot \Lambda^{2- \gamma_{2}} y] + \nabla (\Upsilon^{-1} p) + \Lambda^{\gamma_{1}} y = 0, \\
&\nabla\cdot y = 0,
\end{align*}
and thus for all $q \in \mathbb{N}_{0} \triangleq \mathbb{N} \cup \{0\}$ 
\begin{subequations}\label{est 320}
\begin{align}
& \partial_{t} y_{q} + \frac{1}{2} y_{q} + \Upsilon [(\Lambda^{2- \gamma_{2}} y_{q} \cdot \nabla) y_{q} - (\nabla y_{q})^{T} \cdot \Lambda^{2- \gamma_{2}} y_{q} ] + \nabla p_{q} + \Lambda^{\gamma_{1}} y_{q}= \divergence \mathring{R}_{q}, \label{est 320a} \\
& \nabla\cdot y_{q} = 0, \label{est 320b}
\end{align}
\end{subequations}
where $\mathring{R}_{q}$ is a symmetric, trace-free matrix. We define   
\begin{equation}\label{est 35}
\lambda_{q} \triangleq a^{b^{q}} \text{ for } a \in 5 \mathbb{N} \text{ and }  b \in \mathbb{N} \text{ sufficiently large, as well as } \delta_{q} \triangleq \lambda_{q}^{-2\beta}, 
\end{equation}
(cf. Equation \eqref{est 11}). We set the convention that $\sum_{1 \leq \iota \leq 0} \triangleq 0$, define 
\begin{equation}\label{est 463}
t_{q} \triangleq -2 + \sum_{1 \leq \iota \leq q} \delta_{\iota}^{\frac{1}{2}} < -1 
\end{equation} 
where the inequality can be guaranteed as long as 
\begin{equation}\label{est 442} 
2 \leq a^{b\beta},  
\end{equation} 
(see Equation \eqref{est 153b}), and consider \eqref{est 320} inductively over $[t_{q}, T_{L}]$ for $T_{L}$ to be determined in \eqref{est 312}. We let 
\begin{equation}\label{est 32}
l \triangleq \lambda_{q+1}^{-\alpha}  
\end{equation} 
where $\alpha$ will be optimized in \eqref{est 315c}, and $\{\phi_{l}\}_{l > 0}$ and $\{ \varphi_{l}\}_{l > 0}$ be the families of standard mollifiers with mass one and compact support respectively on $\mathbb{R}^{2}$ and $(\tau_{q+1}, 2 \tau_{q+1}] \subset \mathbb{R}_{+}$, where $\tau_{q+1}$ will be chosen in \eqref{est 210}. Then we extend $B(t)= B(0)$ for all $t \in [-2, 0]$ and mollify $y_{q}, \mathring{R}_{q}$, and $\Upsilon$ to obtain 
\begin{equation}\label{est 337} 
y_{l} \triangleq y_{q} \ast_{x} \phi_{l} \ast_{t} \varphi_{l}, \hspace{1mm} \mathring{R}_{l} \triangleq \mathring{R}_{q} \ast_{x} \phi_{l} \ast_{t} \varphi_{l}, \hspace{1mm} \text{ and } \hspace{1mm} \Upsilon_{l} \triangleq \Upsilon \ast_{t} \varphi_{l}. 
\end{equation} 
It follows from \eqref{est 38}, \eqref{est 320}, and \eqref{est 337} that 
\begin{equation}\label{est 341} 
\partial_{t} y_{l} + \frac{1}{2} y_{l} + \Upsilon_{l} [ ( \Lambda^{2- \gamma_{2}} y_{l} \cdot\nabla) y_{l} - (\nabla y_{l} )^{T} \cdot \Lambda^{2- \gamma_{2}} y_{l} ] + \nabla p_{l} + \Lambda^{\gamma_{1}} y_{l} = \divergence \mathring{R}_{l} + \divergence R_{\text{Com1}} 
\end{equation} 
if we define 
\begin{subequations}\label{est 338} 
\begin{align}
R_{\text{Com1}} \triangleq& \mathcal{B} \left[ \Upsilon_{l} [ \Lambda^{2-\gamma_{2}} y_{l}^{\bot} (\nabla^{\bot} \cdot y_{l})] -  (\Upsilon [  \Lambda^{2- \gamma_{2}} y_{q}^{\bot} (\nabla^{\bot} \cdot y_{q} ) ]) \ast_{x} \phi_{l} \ast_{t} \varphi_{l} \right],  \\
p_{l} \triangleq& p_{q} \ast_{x} \phi_{l} \ast_{t} \varphi_{l}.  
\end{align}
\end{subequations} 
Now in the case of the Navier-Stokes equations, once convex integration has been successfully employed in the case of additive noise, extending to the case of linear multiplicative noise has been considered to be relatively straight-forward, as shown in \cite[Equation (6.20)]{HZZ19}. In short, if $a_{k,j}(t,x)$ was the amplitude function for the perturbation 
\begin{equation}\label{est 164} 
w_{q+1} \triangleq y_{q+1} - y_{l}
\end{equation}    
(cf. Equation \eqref{est 447}), then concerning the $w_{q+1}$ part of the main nonlinear term $(\Lambda^{2-\gamma_{2}} y_{q+1} \cdot \nabla) y_{q+1} - (\nabla y_{q+1})^{T} \cdot \Lambda^{2-\gamma_{2}} y_{q+1}$, namely 
\begin{equation*}
( \Lambda^{2- \gamma_{2}} w_{q+1} \cdot\nabla) w_{q+1} - (\nabla w_{q+1} )^{T} \cdot \Lambda^{2- \gamma_{2}} w_{q+1}, 
\end{equation*}
an analogous approach following \cite[Equation (6.20)]{HZZ19} in the case of \eqref{est 320a} would be to define 
\begin{align}\label{est 454} 
\bar{a}_{k,j} (t,x) \triangleq \Upsilon_{l}^{-\frac{1}{2}} a_{k,j}
\end{align}
so that it can play the crucial role within 
\begin{align*}
&\Upsilon_{l}[( \Lambda^{2- \gamma_{2}} w_{q+1} \cdot\nabla) w_{q+1} - (\nabla w_{q+1} )^{T} \cdot \Lambda^{2- \gamma_{2}} w_{q+1}]  \nonumber \\
=& (\Lambda^{2- \gamma_{2}} \Upsilon_{l}^{\frac{1}{2}} w_{q+1} \cdot\nabla) \Upsilon_{l}^{\frac{1}{2}}w_{q+1} - (\nabla \Upsilon_{l}^{\frac{1}{2}}w_{q+1} )^{T} \cdot \Lambda^{2- \gamma_{2}} \Upsilon_{l}^{\frac{1}{2}}w_{q+1}. 
\end{align*}
This approach effectively reduces the oscillation term in the linear multiplicative case with $\bar{a}_{k,j}$ to the oscillation term in the additive case with $a_{k,j}$ because $\Upsilon_{l}^{\frac{1}{2}} \bar{a}_{k,j} = a_{k,j}$ due to \eqref{est 454}, and makes the estimates in the additive case  become directly applicable in the linear multiplicative case. 

This nice strategy turns out to face multiple difficulties for other models that are more complex than the Navier-Stokes equations. E.g., the author in \cite{Y21d} considered the following generalized MHD system forced by linear multiplicative noise:
\begin{align*}
&du + [\nu_{1} (-\Delta)^{m_{1}} u + \text{div} (u\otimes u - b \otimes b) + \nabla p] dt = u dB_{1}, \hspace{2mm} \nabla\cdot u = 0, \\
&db + [\nu_{2} (-\Delta)^{m_{2}} b + \text{div} (b\otimes u - u\otimes b)]dt = b dB_{2}, \hspace{10mm} \nabla\cdot b = 0, 
\end{align*}
where $b: \mathbb{R}_{+} \times \mathbb{T}^{3} \mapsto \mathbb{R}^{3}$ denotes the magnetic field, $B_{1}$ and $B_{2}$ are $\mathbb{R}$-valued independent Wiener processes, and $m_{1}, m_{2} \in (0,1)$. We can define 
\begin{equation*}
\Upsilon_{1} \triangleq e^{B_{1}}, \Upsilon_{2} \triangleq e^{B_{2}}, v \triangleq \Upsilon_{1}^{-1} u, \Xi \triangleq \Upsilon_{2}^{-1}b 
\end{equation*}
analogously to \eqref{est 325}, which leads us to search for solutions $(v_{q}, \Xi_{q}, \mathring{R}_{q}^{v}, \mathring{R}_{q}^{\Xi})$ for $q \in \mathbb{N}_{0}$ that satisfies 
\begin{align*}
&\partial_{t} v_{q} + \frac{1}{2} v_{q} + (-\Delta)^{m_{1}} v_{q} + \divergence ( \Upsilon_{1} (v_{q} \otimes v_{q}) - \Upsilon_{1}^{-1} \Upsilon_{2}^{2} (\Xi_{q} \otimes \Xi_{q} ))  + \nabla p_{q} = \divergence \mathring{R}_{q}^{v},  \hspace{2mm} \nabla\cdot v_{q}  = 0,\\
&\partial_{t} \Xi_{q} + \frac{1}{2}\Xi_{q} + (-\Delta)^{m_{2}} \Xi_{q} + \Upsilon_{1} \divergence (\Xi_{q} \otimes v_{q} - v_{q} \otimes \Xi_{q} ) = \divergence \mathring{R}_{q}^{\Xi},  \hspace{22mm} \nabla\cdot \Xi_{q} = 0, 
\end{align*}
where $\mathring{R}_{q}^{v}$ is a symmetric trace-free matrix and $\mathring{R}_{q}^{\Xi}$ is a skew-symmetric matrix, analogously to \eqref{est 320}. We now mollify to obtain 
\begin{align*}
&v_{l} \triangleq v_{q} \ast_{x} \phi_{l} \ast_{t} \varphi_{l}, \hspace{3mm} \Theta_{l} \triangleq \Theta_{q} \ast_{x} \phi_{l} \ast_{t} \varphi_{l}, \hspace{3mm} \Upsilon_{k,l} \triangleq \Upsilon_{k}\ast_{t} \varphi_{l}, k\in \{1,2\},\\
&\mathring{R}_{l}^{v} \triangleq \mathring{R}_{q}^{v} \ast_{x} \phi_{l} \ast_{t} \varphi_{l}, \hspace{2mm} \mathring{R}_{l}^{\Theta} \triangleq \mathring{R}_{q}^{\Theta} \ast_{t} \phi_{l} \ast_{t} \varphi_{l} 
\end{align*}
analogously to \eqref{est 337}, and deduce an appropriate mollified system of 
\begin{subequations}\label{est 457}
\begin{align}
&\partial_{t} v_{l} + \frac{1}{2} v_{l} + (-\Delta)^{m_{1}} v_{l} \nonumber\\
& \hspace{20mm} + \divergence (\Upsilon_{1,l} (v_{l} \otimes v_{l}) - \Upsilon_{1,l}^{-1} \Upsilon_{2,l}^{2} (\Xi_{l} \otimes \Xi_{l}) ) + \nabla p_{l} = \divergence (\mathring{R}_{l}^{v}  + R_{\text{Com1}}^{v}), \label{est 457a}\\
& \partial_{t} \Xi_{l} + \frac{1}{2} \Xi_{l} + (-\Delta)^{m_{2}} \Xi_{l} + \Upsilon_{1,l} \divergence (\Xi_{l} \otimes v_{l} - v_{l} \otimes \Xi_{l} ) = \divergence( \mathring{R}_{l}^{\Xi}  + R_{\text{Com1}}^{\Xi}),  \label{est 457b}
\end{align}
\end{subequations}
where 
\begin{align*}
p_{l} \triangleq&  \left(\Upsilon_{1} \frac{ \lvert v_{q} \rvert^{2}}{3} \right)\ast_{x} \phi_{l} \ast_{t} \varphi_{l} - \left(\Upsilon_{1}^{-1} \Upsilon_{2}^{2} \frac{ \lvert \Xi_{q} \rvert^{2}}{3} \right)\ast_{x} \phi_{l} \ast_{t} \varphi_{l} \nonumber\\
& \hspace{20mm} + p_{q} \ast_{x} \phi_{l} \ast_{t} \varphi_{l} - \Upsilon_{1,l} \frac{ \lvert v_{l} \rvert^{2}}{3} + \Upsilon_{1,l}^{-1} \Upsilon_{2,l}^{2} \frac{ \lvert \Xi_{l} \rvert^{2}}{3},  \\
R_{\text{Com1}}^{v}\triangleq& - (\Upsilon_{1} (v_{q} \mathring{\otimes} v_{q}) ) \ast_{x} \phi_{l} \ast_{t} \varphi_{l} + (\Upsilon_{1}^{-1} \Upsilon_{2}^{2} (\Xi_{q} \mathring{\otimes} \Xi_{q} ))\ast_{x} \phi_{l} \ast_{t} \varphi_{l} \nonumber\\
& \hspace{20mm} + \Upsilon_{1,l} (v_{l} \mathring{\otimes} v_{l}) - \Upsilon_{1,l}^{-1} \Upsilon_{2,l}^{2} (\Xi_{l} \mathring{\otimes} \Xi_{l} ),  \\
R_{\text{Com1}}^{\Xi} \triangleq& -(\Upsilon_{1}(\Xi_{q} \otimes v_{q} )) \ast_{x} \phi_{l} \ast_{t} \varphi_{l} + (\Upsilon_{1} (v_{q} \otimes \Xi_{q}) )\ast_{x} \phi_{l} \ast_{t} \varphi_{l} \nonumber \\
& \hspace{20mm} + \Upsilon_{1,l} (\Xi_{l} \otimes v_{l} ) - \Upsilon_{1,l} (v_{l} \otimes \Xi_{l}). 
\end{align*}
A glance at \eqref{est 457} shows that there is no way to reduce all four nonlinear terms therein to the corresponding oscillation terms in its additive case.  Indeed, in order to cancel out $\Upsilon_{1,l}$ in $\Upsilon_{1,l} v_{l} \otimes v_{l}$ of \eqref{est 457a}, $v_{l}$ must eliminate $\Upsilon_{1,l}^{-\frac{1}{2}}$. With that fixed, in order to cancel out $\Upsilon_{1,l}$ in $\Upsilon_{1,l} \Theta_{l} \otimes v_{l} $ and $\Upsilon_{1,l} v_{l} \otimes \Theta_{l}$ in \eqref{est 457b}, we will have to ask $\Theta_{l}$ to eliminate $\Upsilon_{1,l}^{-\frac{1}{2}}$ as well. However, with such a choice on $v_{l}$ and $\Theta_{l}$, $\Upsilon_{1,l}^{-1} \Upsilon_{2,l}^{2} $ within $\Upsilon_{1,l}^{-1} \Upsilon_{2,l}^{2} \Theta_{l} \otimes \Theta_{l}$ of \eqref{est 457a} does not cancel out; in fact, it becomes $\Upsilon_{1,l}^{-2} \Upsilon_{2,l}^{2}$. We refer to  \cite[Remarks 2.1 and 5.1]{Y21d} for detailed discussions on the problems and new ideas to overcome them. 

The challenges in the momentum SQG equations forced by linear multiplicative noise are even more serious. There are two major difficulties and the first one starts with the inductive step $q=0$. In order to explain, let us suppose that the inductive hypothesis (cf. Hypothesis \ref{Hypothesis 5.1}) is simply  
\begin{subequations}\label{est 431}
\begin{align}
&\lVert y_{q} \rVert_{C_{t,x,q}} \leq F_{1}(t,L) \left(1+ \sum_{1 \leq \iota \leq q} \delta_{\iota}^{\frac{1}{2}} \right), \label{est 431a} \\
& \lVert \mathring{R}_{q} \rVert_{C_{t,x,q}} \leq \epsilon_{\gamma} F_{2}(t,q,L) \label{est 431b}
\end{align} 
\end{subequations} 
for some functions $F_{1}(t,L)$ and $F_{2}(t,q,L)$, where $\epsilon_{\gamma} > 0$ is a universal constant from the Geometric Lemma of Lemma \ref{Geometric Lemma} and $\lVert \cdot \rVert_{C_{t,x,q}}$ will be introduced in \eqref{est 00}. A typical choice of a shear flow at step $q = 0$ such as 
\begin{equation}\label{est 432}
y_{0}(t,x) \approx F_{1}(t,L) 
\begin{pmatrix}
\sin(x_{2})\\
0
\end{pmatrix}
\end{equation}  
satisfies $(\Lambda^{2-\gamma_{2}} y_{0} \cdot \nabla ) y_{0} = 0$ but 
\begin{equation}\label{est 434} 
(\nabla y_{0})^{T} \cdot \Lambda^{2-\gamma_{2}} y_{0} \neq 0. 
\end{equation} 
Therefore, the Reynolds stress at step $q= 0$, namely $\mathring{R}_{0}$, must consist of $\mathcal{B} ( ( \nabla y_{0})^{T} \cdot \Lambda^{2-\gamma_{2}} y_{0})$ where $\mathcal{B}$ is the anti-divergence operator from  Lemma \ref{Inverse-divergence lemma}. Unfortunately, the differentiation operator of order -1 is wasted and we end up with  
\begin{align} 
\lVert \mathring{R}_{0} \rVert_{C_{t,x,0}} \geq C_{1} F_{1}(t,L)^{2} \label{est 478}
\end{align}
for some $C_{1} \geq 0$ due to \eqref{est 431a}. Along with \eqref{est 431b}, this implies 
\begin{equation}\label{est 435}
\epsilon_{\gamma} F_{2}(t,0, L) \geq \lVert \mathring{R}_{0} \rVert_{C_{t,x,0}} \geq C_{1} F_{1}(t,L)^{2}. 
\end{equation} 
With $\gamma_{k} \in C^{\infty} (B( \Id, \epsilon_{\gamma}))$ from the Geometric Lemma of Lemma \ref{Geometric Lemma}, $\mathring{R}_{q,j}$ from \eqref{est 355}, and $\tau_{q+1}$ from \eqref{est 210}, we will need to define the amplitude function of the form 
\begin{equation}\label{est 436}
a_{k,j} (t,x) = D(j,q) \gamma_{k} \left( \Id - \frac{ \mathring{R}_{q,j} (t,x)}{F_{2}(\tau_{q+1} j, q,L)} \right) 
\end{equation} 
for some function $D(j,q)$ so that $\Id - \frac{\mathring{R}_{q,j}}{F_{2}(\tau_{q+1} j, q, L)} \overset{\eqref{est 431b}}{\in} B( \Id, \epsilon_{\gamma}) = Dom(\gamma_{k})$. Then, at the application of the Geometric Lemma, we have for $\chi_{j}$ to be defined in \eqref{est 209},   
\begin{align*}
& \sum_{j} \chi_{j}^{2} \mathring{R}_{q,j} + \frac{ \gamma_{2} \lambda_{q+1}^{2-\gamma_{2}}}{2}  \sum_{j,k}  k^{\bot} \otimes k^{\bot} \chi_{j}^{2} a_{k,j}^{2} - \frac{\gamma_{2} \lambda_{q+1}^{2-\gamma_{2}}}{2}  \sum_{j,k}  \chi_{j}^{2} \Id a_{k,j}^{2} \\
\overset{\eqref{est 436}}{=}& \sum_{j} \chi_{j}^{2} \lambda_{q+1}^{2- \gamma_{2}} \left[ \frac{ \mathring{R}_{q,j}}{\lambda_{q+1}^{2- \gamma_{2}}}   + \frac{\gamma_{2}}{2} \sum_{k}  k^{\bot} \otimes k^{\bot} D^{2}(j,q) \gamma_{k}^{2} \left( \Id - \frac{ \mathring{R}_{q,j}}{F_{2}(\tau_{q+1} j, q, L)} \right) - \frac{\gamma_{2}}{2} \sum_{k} \Id a_{k,j}^{2} \right]  \\
\overset{\eqref{est 265}}{=}& \sum_{j} \chi_{j}^{2} \lambda_{q+1}^{2- \gamma_{2}} \left[ \frac{ \mathring{R}_{q,j}}{\lambda_{q+1}^{2- \gamma_{2}}} + \gamma_{2} D^{2}(j,q) \left( \Id - \frac{ \mathring{R}_{q,j} }{F_{2} (\tau_{q+1} j, q, L)} \right) - \frac{\gamma_{2}}{2} \sum_{k} \Id a_{k,j}^{2} \right]
\end{align*}
(see Equation \eqref{est 459} for details).  Thus, to cancel out the previous error of $\frac{\mathring{R}_{q,j}}{\lambda_{q+1}^{2- \gamma_{2}}}$ with $- \gamma_{2} D^{2} (j,q) \frac{ \mathring{R}_{q,j}}{F_{2}(\tau_{q+1} j, q, L)}$, we must choose 
\begin{equation*}
D(j,q) = \sqrt{\frac{F_{2}(\tau_{q+1} j, q, L)}{\gamma_{2} \lambda_{q+1}^{2-\gamma_{2}}}}
\end{equation*} 
which implies  
\begin{align}
D(j,q)  = \sqrt{\frac{F_{2}(\tau_{q+1} j, q, L)}{\gamma_{2} \lambda_{q+1}^{2-\gamma_{2}}}} \overset{\eqref{est 435}}{\gtrsim} F_{1}(\tau_{q+1} j) \lambda_{q+1}^{-1+ \frac{\gamma_{2}}{2}}.\label{est 437} 
\end{align}
Consequently, \eqref{est 436} implies that $a_{k,j}(t,x)$ would have the same lower bound, modulus constants, for all $t \in \supp \chi_{j}$; in turn, this implies that the perturbation $w_{q+1}$ from \eqref{est 164} would have the same lower bound as well. This, together with \eqref{est 431a} implies 
\begin{equation}\label{est 438}
\lambda_{q+1}^{-\beta} F_{1}(t, L) \overset{\eqref{est 35}}{=} \delta_{q+1}^{\frac{1}{2}} F_{1}(t, L) \overset{\eqref{est 431a}}{\gtrsim} w_{q+1} (t) \gtrsim  F_{1}(\tau_{q+1} j, L) \lambda_{q+1}^{-1 + \frac{\gamma_{2}}{2}}
\end{equation}  
which seems to be ``almost possible'' considering the fact that $\beta > 1 - \frac{\gamma_{2}}{2}$ due to \eqref{est 315b}. If, in \eqref{est 431b}, we use the typical upper bound of 
\begin{equation*}
\lVert \mathring{R}_{q} \rVert_{C_{t,x,q}} \leq \epsilon_{\gamma} M_{0}(t) \lambda_{q+1}^{2-\gamma_{2}} \delta_{q+1} 
\end{equation*} 
for $M_{0}(t)$ to be defined in \eqref{est 326},  then the difficulty described in \eqref{est 438} cannot be overcome. In the case of an additive noise, \cite[Equation (90)]{Y23a}, in order to close all the necessary estimates, the author chose instead 
\begin{equation}\label{est 460}
\lVert \mathring{R}_{q} \rVert_{C_{t,x,q}} \leq \bar{C} L M_{0}(t) \lambda_{q+1}^{2-\gamma_{2}} \delta_{q+1} 
\end{equation} 
for a carefully selected constant $\bar{C} > 0$ (see \cite[Equation (85)]{Y23a}) and required $L > 0$ to satisfy 
\begin{equation}\label{est 461}
a^{-b (1- \frac{\gamma_{2}}{2} - \beta)}  \lesssim L^{\frac{1}{2}}
\end{equation} 
in \cite[Equation (94b)]{Y23a} where the exponent ``$1 - \frac{\gamma_{2}}{2} - \beta$'' is negative (see also Equation \eqref{est 315b}). 

Yet, tracing these computations in the case of the linear multiplicative noise shows that the problem is more severe and surprisingly cannot be overcome by any extension of the approach described in \eqref{est 460}-\eqref{est 461}. First, in contrast to \eqref{est 434}, we now obtain  
\begin{equation*}
\Upsilon (\nabla y_{0})^{T} \cdot \Lambda^{2-\gamma_{2}} y_{0} \neq 0. 
\end{equation*} 
As we will see in \eqref{est 324}, we may assume the bound of $\lvert B(t) \rvert \leq L^{\frac{1}{4}}$ over $[-2, T_{L}]$ and therefore, we now have 
\begin{equation*}
\epsilon_{\gamma} F_{2}(t,0, L) \geq \lVert \mathring{R}_{0} \rVert_{C_{t,x,0}} \geq C_{1} e^{L^{\frac{1}{4}}} F_{1}(t, L)^{2}
\end{equation*}
for some $C_{1} \geq 0$ due to \eqref{est 431},  in contrast to \eqref{est 435} which leads to 
\begin{align}
D(j,q)   \gtrsim e^{\frac{1}{2} L^{\frac{1}{4}}} F_{1}(\tau_{q+1} j, L) \lambda_{q+1}^{-1 + \frac{\gamma_{2}}{2}} 
\end{align}
in contrast to \eqref{est 437}. This leads ultimately to, in contrast to \eqref{est 438}, 
\begin{equation}
\lambda_{q+1}^{-\beta} F_{1}(t, L)  \overset{\eqref{est 431a}}{\gtrsim} w_{q+1} \gtrsim  e^{\frac{1}{2} L^{\frac{1}{4}}} F_{1}(\tau_{q+1} j, L) \lambda_{q+1}^{-1 + \frac{\gamma_{2}}{2}}.
\end{equation} 
Informally, any bound in terms of $L$ should already be part of $F_{1}(t, L)$ and thus this presents a gap that is more significant than \eqref{est 438} and cannot be closed by simply carefully choosing a constant and introducing some function of $L$ and asking it to cover the gap between the exponents $1 - \frac{\gamma_{2}}{2}$ and $\beta$. 

An immediate next idea is that perhaps a shear flow $y_{0}(t,x) \approx F_{1}(t,L) 
\begin{pmatrix}
\sin(x_{2})\\
0
\end{pmatrix}$ in \eqref{est 432} at the step $q = 0$ is no longer the correct choice and we need an alternative choice that satisfies $(\nabla y_{0})^{T} \cdot \Lambda^{2-\gamma_{2}} y_{0} = 0$ in contrast to \eqref{est 434}. However, we could not come up with a good candidate for this endeavor. Of course, we can write the two nonlinear terms together via \eqref{est 38} as $(\Lambda^{2-\gamma_{2}} y_{0})^{\bot} (\nabla^{\bot} \cdot y_{0})$; unfortunately, for this to vanish, it seems that $y_{0}$ must be curl-free and this, in addition to the divergence-free condition that is already imposed from \eqref{est 320b}, seemed too restrictive. After attempting multiple other ideas all of which failed, at the time of writing \cite{Y23a} we felt that this difficulty due to the nonlinear term in non-divergence form requires new approaches and chose to reconsider it in a future work. 

The second major difficulty is the singularity of the nonlinear term that now includes the operator  $\Lambda^{2-\gamma_{2}}$, where $\gamma_{2} \in [1, 2)$ for generality (see \eqref{est 315a}), in contrast to other PDEs such as the Navier-Stokes equations. In \eqref{est 345e} we will define the second stochastic commutator error of 
\begin{align}\label{est 462}
\divergence R_{\text{Com2}} = (\Upsilon - \Upsilon_{l}) [(\Lambda^{2- \gamma_{2}} y_{q+1} \cdot \nabla) y_{q+1} - (\nabla y_{q+1})^{T} \cdot \Lambda^{2-\gamma_{2}} y_{q+1} ]; 
\end{align}
we refer to $R_{\text{Com2}}$ as the stochastic commutator error because such an error is absent in the deterministic case. In all the previous works, an analogous estimate on $R_{\text{Com2}}$ has always been relatively easy, consisting of proving an analog of Hypothesis \ref{Hypothesis 5.1} \ref{Hypothesis 5.1 (a)} at level $q+1$ first and then a straight-forward mollifier estimate; e.g. we recall the estimate of 
\begin{align}
\lVert R_{\text{Com2}} \rVert_{C_{t,q+1}L_{x}^{1}} =& \lVert (\Upsilon_{l} - \Upsilon) (v_{q+1} \mathring{\otimes} v_{q+1}) \rVert_{C_{t, q+1}} \nonumber \\
\lesssim& l^{\frac{1}{2} - 2\delta} m_{L}^{2} \lVert v_{q+1} \rVert_{C_{t,q+1}L_{x}^{2}}^{2} \lesssim l^{\frac{1}{2} - 2 \delta} m_{L}^{4} M_{0}(t) \leq \frac{M_{0}(t) c_{R} \delta_{q+2}}{5}  \label{est 479}
\end{align}
from \cite[Equation (173)]{Y20a} in the case of the Navier-Stokes equations. Surprisingly, an analogous attempt fails in the case of the momentum SQG equations as follows: for any $\delta \in (0, \frac{1}{4})$, we only obtain 
\begin{align}
\lVert R_{\text{Com2}} \rVert_{C_{t,x,q+1}} \overset{\eqref{est 38}}{=}&\lVert \mathcal{B} \left( ( \Upsilon - \Upsilon_{l}) \Lambda^{2- \gamma_{2}} y_{q+1}^{\bot} (\nabla^{\bot} \cdot y_{q+1}) \right) \rVert_{C_{t,x,q+1}}  \nonumber \\
\lesssim&\lVert \Upsilon - \Upsilon_{l} \rVert_{C_{t,q+1}} \lVert \Lambda^{2- \gamma_{2}} y_{q+1}\rVert_{C_{t,x,q+1}} \lambda_{q+1}^{\gamma_{2} -1} \lVert \Lambda^{2-\gamma_{2}} y_{q+1} \rVert_{C_{t,x,q+1}} \nonumber \\
\overset{\eqref{est 324} \eqref{est 317}\eqref{est 32}}{\lesssim}& \lambda_{q+1}^{-\alpha (\frac{1}{2} - 2 \delta)} m_{L}^{4}M_{0}(t)^{\frac{3}{2}}  \lambda_{q+1}^{3- \gamma_{2}} \delta_{q+1} \label{est 346} 
\end{align} 
which is far too large. Indeed, to bound the right hand side of \eqref{est 346} by an appropriate inductive hypothesis of $\lVert \mathring{R}_{q+1} \rVert_{C_{t,x,q+1}}$, we found out that we will need $\alpha$ larger than the limit of range that is allowed from other estimates. Specifically, ultimately we will optimize over all other constraints and choose $\alpha = 1 + \frac{\beta}{2}$ in \eqref{est 315c}; yet, we realize that while we will need the exponent of 
\begin{align*}
- \left(1+ \frac{\beta}{2} \right)\left( \frac{1}{2} - 2 \delta \right) + 3 - \gamma_{2} - 2 \beta
\end{align*}
from $\lambda_{q+1}^{-\alpha (\frac{1}{2} - 2 \delta)} \lambda_{q+1}^{3-\gamma_{2}} \delta_{q+1}$ in \eqref{est 346} to be negative, as $\beta > 1 - \frac{\gamma_{2}}{2}$ will be taken to be very close to $1- \frac{\gamma_{2}}{2}$,  even for $\delta \in (0, \frac{1}{4})$ arbitrary small, we have at best 
\begin{align*}
-\left( 1 + \frac{\beta}{2} \right) \left(\frac{1}{2} - 2 \delta \right) + 3 - \gamma_{2} - 2 \beta \approx \frac{1}{4} + \frac{\gamma_{2}}{8} \geq \frac{3}{8}. 
\end{align*}
We emphasize again that the difficulty here stems from the nonlinear term $(\nabla y_{q+1})^{T} \cdot \Lambda^{2-\gamma_{2}} y_{q+1}$ in \eqref{est 462} that is not in a divergence form, forcing $\mathcal{B}$ to go to waste in the first inequality of \eqref{est 346}, in sharp contrast to \eqref{est 479} where the $R_{\text{Com2}}$ in the case of the Navier-Stokes equations does not even have the divergence operator to start with. 

We summarize why this second difficulty due to $\Lambda^{2-\gamma_{2}}$ within the nonlinear terms never appeared in previous works. 
\begin{enumerate} 
\item First, such difficulty never existed in the case of the Navier-Stokes equations forced by linear multiplicative noise (see Equation \eqref{est 479}) because the nonlinear term therein did not have derivatives such as $\Lambda^{2-\gamma_{2}}$ in our current $R_{\text{Com2}}$ and it was in a divergence form allowing one to take full advantage of $\mathcal{B}$. 
\item Second, such difficult terms did not exist even in the case of the SQG equations forced by additive noise (e.g. see \cite[Equations (245)--(246)]{Y23a}) because in the case of additive noise, within the product of 
\begin{align*}
(\Lambda^{2-\gamma_{2}} f \cdot \nabla)g - (\nabla f)^{T} \cdot \Lambda^{2-\gamma_{2}} g, 
\end{align*}
$f$ or $g$ was the difference of the Ornstein-Uhlenbeck process; e.g. 
\begin{align*}
\text{``}R_{\text{Com2,1}} \triangleq \mathcal{B} ( \Lambda^{2-\gamma_{2}} y_{q+1} \cdot \nabla (z_{q+1} - z_{q}) + \Lambda^{2-\gamma_{2}} (z_{q+1} - z_{q}) \cdot \nabla y_{q+1} )\text{''}
\end{align*}
from \cite[Equation (246a)]{Y23a}. In the case of the linear multiplicative noise, the \emph{difference} within $R_{\text{Com2}}$ of \eqref{est 462} is $(\Upsilon - \Upsilon_{l})$ which can only provide us $l^{\frac{1}{2} - 2 \delta} = \lambda_{q+1}^{\alpha(\frac{1}{2} - 2 \delta)}$ for $\delta \in (0, \frac{1}{4})$ while both spatial derivatives of $\Lambda^{2-\gamma_{2}}$ and $\nabla$ fall on $y_{q+1}$ which cost $\lambda_{q+1}^{2-\gamma_{2}}$ and $\lambda_{q+1}$, respectively. This makes the $R_{\text{Com2}}$ in \eqref{est 462} much more singular than the  $R_{\text{Com2}}$ in the case of additive noise. 
\item Lastly, this difficulty did not exist in the convex integration scheme for the deterministic SQG equations in \cite{BSV19} because the stochastic commutator error $R_{\text{Com2}}$ is an error due to the exponential Brownian motion $\Upsilon(t)$; i.e., $B \equiv 0$ turns $R_{\text{Com2}} \equiv 0$.  
\end{enumerate} 
\end{remark} 

In this work we overcome these major difficulties and succeed on employing convex integration on \eqref{est 30}; we present our main results in the next section. 

\section{Statement of main results}
\subsection{Statements of Theorems \ref{Theorem 2.1}-\ref{Theorem 2.2}}
First, given any deterministic initial data $v^{\text{in}} \in \dot{H}^{\frac{1}{2}}(\mathbb{T}^{2})$, via a Galerkin approximation one can construct a solution $v$ to \eqref{est 30} with $\gamma_{2} = 1$ that satisfies the inequality of 
\begin{equation}\label{est 37} 
\mathbb{E}^{\textbf{P}} [\lVert v(t) \rVert_{\dot{H}_{x}^{\frac{1}{2}}}^{2}] \leq e^{t} \lVert v^{\text{in}} \rVert_{\dot{H}_{x}^{\frac{1}{2}}}^{2} \hspace{3mm} \forall \hspace{1mm} t \geq 0.
\end{equation} 

\begin{theorem}\label{Theorem 2.1} 
Suppose that $\gamma_{1} \in (0, \frac{3}{2})$ and $\gamma_{2} = 1$. Then, given any $T> 0, K > 1$, and $\kappa \in (0,1)$, there exist a sufficiently small $\iota > 0$ and a $\textbf{P}$-a.s. strictly positive stopping time $\mathfrak{t}$ such that
\begin{equation}\label{est 148}
\textbf{P} (\{\mathfrak{t} \geq T \}) > \kappa 
\end{equation} 
and the following is additionally satisfied. There exists an $(\mathcal{F}_{t})_{t\geq 0}$-adapted process $v$ that is a weak solution to \eqref{est 30} starting from a deterministic initial data $v^{\text{in}}$ that satisfies  
\begin{equation}\label{est 9}
\esssup_{\omega \in \Omega} \lVert v(\omega) \rVert_{C_{\mathfrak{t}} C_{x}^{\frac{1}{2} + \iota}} + \esssup_{\omega \in \Omega} \lVert v(\omega) \rVert_{C_{\mathfrak{t}}^{\eta}C_{x}}  < \infty \hspace{3mm} \forall \hspace{1mm} \eta \in \left(0, \frac{1}{3}\right], 
\end{equation} 
and on the set $\{ \mathfrak{t} \geq T \}$,  
\begin{equation}\label{est 313} 
\lVert v(T) \rVert_{\dot{H}_{x}^{\frac{1}{2}}}  > K e^{\frac{T}{2}} \lVert v^{\text{in}} \rVert_{\dot{H}_{x}^{\frac{1}{2}}}. 
\end{equation} 
\end{theorem}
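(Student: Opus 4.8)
The plan is to pass from the stochastic system \eqref{est 30} to the random PDE \eqref{est 320} via the transformation \eqref{est 325}, $\Upsilon=e^{B}$, $y=\Upsilon^{-1}v$, to run a convex integration iteration on the random PDE up to a suitable stopping time, and then to transform back to \eqref{est 30}. Fix $T>0$, $K>1$, $\kappa\in(0,1)$. After extending $B(t)=B(0)=0$ on $[-2,0]$, for each large $L$ set $\mathfrak{t}_{L}=\inf\{t\ge 0:\ \lvert B(t)\rvert\ge L^{\frac{1}{4}}\}\wedge L$, a $\textbf{P}$-a.s. strictly positive stopping time, increasing in $L$, with $\textbf{P}(\mathfrak{t}_{L}\ge T)\to 1$ as $L\to\infty$; choose $L$ so large that $\textbf{P}(\mathfrak{t}_{L}\ge T)>\kappa$, fix the deterministic horizon $T_{L}\ge T$ of \eqref{est 312}, and put $\mathfrak{t}=\mathfrak{t}_{L}\wedge T_{L}$, so that \eqref{est 148} holds. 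On $\{t\le\mathfrak{t}\}$ one has $\lvert B(t)\rvert\le L^{\frac{1}{4}}$ as in \eqref{est 324}, hence $\Upsilon$ and $\Upsilon^{-1}$ are bounded by $e^{L^{\frac{1}{4}}}$ and all coefficients of \eqref{est 320} are controlled — this is what makes the random PDE amenable to convex integration.

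The core is an inductive construction (in the detailed form of Hypothesis \ref{Hypothesis 5.1}): there exist $(\mathcal{F}_{t})_{t\ge 0}$-adapted pairs $(y_{q},\mathring{R}_{q})$, $q\in\mathbb{N}_{0}$, solving \eqref{est 320} on $[t_{q},T_{L}]$, deterministic and of shear form on $[-2,0]$ with $y_{q}(0)=v^{\text{in}}$, and satisfying bounds of the type \eqref{est 431}, namely $\lVert y_{q}\rVert_{C_{t,x,q}}\le F_{1}(t,L)(1+\sum_{1\le\iota\le q}\delta_{\iota}^{\frac{1}{2}})$ and $\lVert\mathring{R}_{q}\rVert_{C_{t,x,q}}\le\epsilon_{\gamma}F_{2}(t,q,L)$ with $F_{2}(t,q,L)\to 0$ as $q\to\infty$, together with the material-derivative estimates needed for the transport error; here $\lambda_{q}=a^{b^{q}}$, $\delta_{q}=\lambda_{q}^{-2\beta}$ as in \eqref{est 35}, and $a,b,\beta,\alpha$ are fixed at the end subject to \eqref{est 442}, \eqref{est 315c}, $\beta$ close to $1-\gamma_{2}/2$ from above, etc. For the base case $q=0$ one takes the shear flow \eqref{est 432}, $y_{0}(t,x)\approx F_{1}(t,L)(\sin x_{2},0)$ — divergence-free, in $\dot{H}^{\frac{1}{2}}(\mathbb{T}^{2})$, with $(\Lambda^{2-\gamma_{2}}y_{0}\cdot\nabla)y_{0}=0$ — where $F_{1}$ grows on $[0,T]$ from $F_{1}(0,L)\approx\lVert v^{\text{in}}\rVert_{\dot{H}_{x}^{\frac{1}{2}}}$ to a value so large that $\frac{1}{2}e^{-L^{\frac{1}{4}}}\lVert y_{0}(T)\rVert_{\dot{H}_{x}^{\frac{1}{2}}}>Ke^{\frac{T}{2}}\lVert v^{\text{in}}\rVert_{\dot{H}_{x}^{\frac{1}{2}}}$; the corresponding $\mathring{R}_{0}$ then has to absorb $\mathcal{B}((\nabla y_{0})^{T}\cdot\Lambda^{2-\gamma_{2}}y_{0})$ from \eqref{est 434} together with $\mathcal{B}(\partial_{t}y_{0}+\frac{1}{2}y_{0}+\Lambda^{\gamma_{1}}y_{0})$.

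The inductive step from $q$ to $q+1$ mollifies as in \eqref{est 337} to produce $y_{l},\mathring{R}_{l},\Upsilon_{l}$ and the system \eqref{est 341} with commutator $R_{\text{Com1}}$ of \eqref{est 338}; it defines the perturbation $w_{q+1}$ of \eqref{est 164} through amplitudes of the form \eqref{est 436} with the linear-multiplicative correction \eqref{est 454} and building blocks adapted to the symbol $\Lambda^{2-\gamma_{2}}$, uses the Geometric Lemma \ref{Geometric Lemma} to cancel $\mathring{R}_{l}$ against the low-frequency part of the oscillation term, and assembles $\mathring{R}_{q+1}$ from the oscillation, transport, Nash, linear, corrector and commutator errors $R_{\text{Com1}}$, $R_{\text{Com2}}$. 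The hard part, as detailed in Remark \ref{Remark 1.3}, is twofold and is exactly where the new ideas enter. First, the non-divergence-form term $(\nabla y_{0})^{T}\cdot\Lambda^{2-\gamma_{2}}y_{0}\neq 0$ forces $\mathcal{B}$ to be wasted, so a crude bound gives $\lVert\mathring{R}_{0}\rVert\gtrsim e^{L^{\frac{1}{4}}}F_{1}(t,L)^{2}$, which no inductive hypothesis of the type of \eqref{est 460} can accommodate; the remedy is the key observation that for a shear flow the combined nonlinearity $(\Lambda^{2-\gamma_{2}}y_{q})^{\bot}(\nabla^{\bot}\cdot y_{q})$ has a structure that is itself of oscillation-error type, so it can be folded into the oscillation error rather than paid for through $\mathcal{B}$ — the ``rewriting the difficult stochastic commutator error in terms of the oscillation error'' of the abstract — which is what permits the choice of $\beta$ close to $1-\gamma_{2}/2$ and of $F_{1},F_{2}$. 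Second, the stochastic commutator error \eqref{est 462}, $\divergence R_{\text{Com2}}=(\Upsilon-\Upsilon_{l})[(\Lambda^{2-\gamma_{2}}y_{q+1}\cdot\nabla)y_{q+1}-(\nabla y_{q+1})^{T}\cdot\Lambda^{2-\gamma_{2}}y_{q+1}]$, is far more singular than in every previous work, since $\Upsilon-\Upsilon_{l}$ gains only $l^{\frac{1}{2}-2\delta}$ while two spatial derivatives of orders $2-\gamma_{2}$ and $1$ fall on $y_{q+1}$, so the naive mollification bound \eqref{est 346} is hopeless; again one uses the shear-flow structure, preserved along the iteration, to rewrite $R_{\text{Com2}}$ as part of an oscillation-type error that closes with the optimal $\alpha=1+\frac{\beta}{2}$ of \eqref{est 315c}. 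With these resolved, a consistent choice of $a,b,\beta,\alpha,L$ makes every error estimate close, which completes the inductive step — and this is where I expect essentially all the difficulty to lie.

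Finally, the inductive estimates force $(y_{q})$ to be Cauchy in $C([0,\mathfrak{t}];C_{x}^{\frac{1}{2}+\iota})$ and in $C^{\eta}([0,\mathfrak{t}];C_{x})$ for $\iota>0$ small and every $\eta\in(0,\frac{1}{3}]$, so it converges to an adapted limit $y$; since $\lVert\mathring{R}_{q}\rVert_{C_{t,x,q}}\to 0$, $y$ is a weak solution of the random PDE on $[0,\mathfrak{t}]$ (extended past $\mathfrak{t}$ by a probabilistically weak solution of the same random PDE, keeping adaptedness) with $y(0)=v^{\text{in}}$, and it may be arranged as in \cite{BSV19} that $\lVert y(t)\rVert_{\dot{H}_{x}^{\frac{1}{2}}}$ at $t=T$ is comparable to $\lVert y_{0}(T)\rVert_{\dot{H}_{x}^{\frac{1}{2}}}$, in particular at least $\frac{1}{2}\lVert y_{0}(T)\rVert_{\dot{H}_{x}^{\frac{1}{2}}}$. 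Setting $v=\Upsilon y$ gives an $(\mathcal{F}_{t})_{t\ge 0}$-adapted weak solution of \eqref{est 30} from the deterministic datum $v^{\text{in}}$, and since $\lvert B\rvert\le L^{\frac{1}{4}}$ on $[0,\mathfrak{t}]$ the regularity of $y$ transfers to $v$, yielding \eqref{est 9}. On $\{\mathfrak{t}\ge T\}$ one has $T\le T_{L}$ and $\lVert v(T)\rVert_{\dot{H}_{x}^{\frac{1}{2}}}=e^{B(T)}\lVert y(T)\rVert_{\dot{H}_{x}^{\frac{1}{2}}}\ge e^{-L^{\frac{1}{4}}}\lVert y(T)\rVert_{\dot{H}_{x}^{\frac{1}{2}}}\ge\frac{1}{2}e^{-L^{\frac{1}{4}}}\lVert y_{0}(T)\rVert_{\dot{H}_{x}^{\frac{1}{2}}}>Ke^{\frac{T}{2}}\lVert v^{\text{in}}\rVert_{\dot{H}_{x}^{\frac{1}{2}}}$ by the choice of $F_{1}$, which is \eqref{est 313}. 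The decisive obstacle is the inductive step, specifically reconciling the non-divergence-form nonlinearity and the $\Lambda^{2-\gamma_{2}}$-singular stochastic commutator error with the super-exponential frequency growth $\lambda_{q}=a^{b^{q}}$, through the shear-flow observation.
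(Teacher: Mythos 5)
Your overall architecture — transform to the random PDE via $\Upsilon=e^{B}$, run a Hölder-space convex integration with inductive hypotheses of the form of Hypothesis \ref{Hypothesis 5.1}, start from a shear flow, pass to the limit, and undo the transformation — matches the paper. But at the two points you yourself identify as decisive, the mechanisms you propose are not the ones that make the argument close, and one of them would fail outright. For the first difficulty, you write that $\mathring{R}_{0}$ ``has to absorb $\mathcal{B}((\nabla y_{0})^{T}\cdot\Lambda^{2-\gamma_{2}}y_{0})$'' and then that the remedy is to fold this term into the oscillation error. The first statement is precisely the dead end the paper exhibits in \eqref{est 478}--\eqref{est 438}: paying for this term through $\mathcal{B}$ forces a lower bound on $\mathring{R}_{0}$, hence on the amplitudes $a_{k,j}$, hence on $w_{q+1}$, that is incompatible with $\lVert w_{q+1}\rVert\lesssim\delta_{q+1}^{1/2}$ when the extra factor $e^{L^{1/4}}$ from $\Upsilon$ is present. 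The second statement does not rescue it: at $q=0$ there is no oscillation error against which this quadratic-in-$y_{0}$ term could cancel, and the ``rewriting in terms of the oscillation error'' of the abstract refers to $R_{\text{Com2}}$, not to this term. The actual resolution is elementary but essential: for a shear flow $y_{0}=f(x_{2})e_{1}$ one has $(\nabla y_{0})^{T}\cdot\Lambda^{2-\gamma_{2}}y_{0}=(0,f'(x_{2})\Lambda^{2-\gamma_{2}}f(x_{2}))^{T}$, which is an exact gradient (of $(0,g(x_{2}))^{T}$ with $g'=f'\Lambda^{2-\gamma_{2}}f$; see \eqref{est 477}), so it is absorbed into the pressure \eqref{est 444} and never enters $\mathring{R}_{0}$ in \eqref{est 322}. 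Without this observation the base case does not close.

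For the stochastic commutator error $R_{\text{Com2}}$ of \eqref{est 462}, your proposed mechanism — ``the shear-flow structure, preserved along the iteration'' — is not available: only $y_{0}$ is a shear flow, and nothing of that structure survives past $q=0$. What actually works is the decomposition $y_{q+1}=y_{l}+w_{q+1}$ of \eqref{est 389}: the $w_{q+1}$ self-interaction in \eqref{est 389a} is exactly $\Upsilon_{l}^{-1}$ times the nonlinear part of $\divergence R_{O}$, so adding and subtracting $\mathring{R}_{l}$ yields \eqref{est 390}--\eqref{est 391}, with $R_{\text{Com2,1}}=-(\Upsilon-\Upsilon_{l})\Upsilon_{l}^{-1}\mathring{R}_{l}$ and $R_{\text{Com2,2}}=(\Upsilon-\Upsilon_{l})\Upsilon_{l}^{-1}R_{O}$ both controlled by the mollification gain $\lVert\Upsilon-\Upsilon_{l}\rVert\lesssim l^{\frac{1}{2}-2\delta}m_{L}^{2}$ together with the already-established bound \eqref{est 307} on $R_{O}$, while the cross terms \eqref{est 389b}--\eqref{est 389c} gain $\lambda_{q+1}^{-1}$ from $\mathcal{B}\tilde{P}_{\approx\lambda_{q+1}}$ by frequency localization. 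A smaller inaccuracy: the initial datum is not prescribed as $y_{q}(0)=v^{\text{in}}$ along the iteration; $v^{\text{in}}$ is defined a posteriori as $y\rvert_{t=0}$ of the limit (which is deterministic because each $y_{q}$ is deterministic on $[t_{q},0]$), and the growth \eqref{est 313} comes from comparing $\lVert y(T)\rVert_{\dot H^{1/2}_{x}}$ to $\lVert y_{0}(T)\rVert_{\dot H^{1/2}_{x}}\approx m_{L}M_{0}(T)^{1/2}$ against $\lVert y(0)\rVert_{\dot H^{1/2}_{x}}$ via \eqref{est 333}--\eqref{est 336}, not from tuning $F_{1}(0,L)\approx\lVert v^{\text{in}}\rVert_{\dot H^{1/2}_{x}}$.
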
 

\begin{theorem}\label{Theorem 2.2} 
Suppose that  $\gamma_{1} \in (0, \frac{3}{2})$ and $\gamma_{2} = 1$. Then non-uniqueness in law holds for \eqref{est 30} on $[0,\infty)$. Moreover, for all $T > 0$ fixed, non-uniqueness in law holds for \eqref{est 30} on $[0,T]$. 
\end{theorem}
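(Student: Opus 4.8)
The plan is to deduce Theorem \ref{Theorem 2.2} from Theorem \ref{Theorem 2.1} together with the Galerkin-based existence of the solution satisfying \eqref{est 37}, following the now-standard route via the Yamada--Watanabe-type argument adapted to the martingale-solution setting (as in, e.g., \cite{HZZ19, Y20a, Y23a}). First I would carefully set up the notion of a probabilistically weak (martingale) solution to \eqref{est 30}: a probability space, a filtration, an $\mathbb{R}$-valued Wiener process $B$, and an adapted process $v$ taking values in an appropriate space (e.g. $C_{t}\dot H_{x}^{1/2}$ or, more conservatively, trajectories in $C_{t}C_{x}^{1/2+\iota}$ localized up to the stopping time) satisfying \eqref{est 30a}--\eqref{est 30b} in the analytically weak sense against divergence-free test functions, with the constraint \eqref{est 30b} built in. Non-uniqueness in law then means exhibiting two such solutions, starting from the \emph{same} deterministic initial datum $v^{\text{in}}$, whose laws on the path space differ.

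Next I would produce the two competing solutions. For the first, fix $v^{\text{in}} \in \dot H^{1/2}(\mathbb{T}^{2})$ and invoke the Galerkin construction yielding a martingale solution satisfying the energy inequality \eqref{est 37}, namely $\mathbb{E}^{\mathbf P}[\lVert v(t)\rVert_{\dot H_{x}^{1/2}}^{2}] \le e^{t}\lVert v^{\text{in}}\rVert_{\dot H_{x}^{1/2}}^{2}$ for all $t\ge 0$. For the second, apply Theorem \ref{Theorem 2.1} with a well-chosen $K>1$, $T>0$, and $\kappa\in(0,1)$: it gives an adapted weak solution $\tilde v$ from the same deterministic $v^{\text{in}}$, a $\mathbf P$-a.s.\ strictly positive stopping time $\mathfrak t$ with $\mathbf P(\mathfrak t\ge T) > \kappa$, and on $\{\mathfrak t \ge T\}$ the pointwise-in-$\omega$ lower bound \eqref{est 313}, $\lVert \tilde v(T)\rVert_{\dot H_{x}^{1/2}} > K e^{T/2}\lVert v^{\text{in}}\rVert_{\dot H_{x}^{1/2}}$. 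Squaring and taking expectations, $\mathbb{E}^{\mathbf P}[\lVert \tilde v(T)\rVert_{\dot H_{x}^{1/2}}^{2}] \ge K^{2} e^{T}\lVert v^{\text{in}}\rVert_{\dot H_{x}^{1/2}}^{2}\,\mathbf P(\mathfrak t \ge T) > \kappa K^{2} e^{T}\lVert v^{\text{in}}\rVert_{\dot H_{x}^{1/2}}^{2}$, so choosing $K$ large enough that $\kappa K^{2} > 1$ forces $\mathbb{E}^{\mathbf P}[\lVert \tilde v(T)\rVert_{\dot H_{x}^{1/2}}^{2}] > e^{T}\lVert v^{\text{in}}\rVert_{\dot H_{x}^{1/2}}^{2}$. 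Since the Galerkin solution obeys the reverse inequality, the two solutions have different one-dimensional marginals at time $T$, hence different laws on path space; one should take $v^{\text{in}}\neq 0$ so that the gap is strict. This handles the finite-time-interval assertion for every fixed $T>0$; for the $[0,\infty)$ statement one restricts the two laws to $[0,T]$ and notes that distinct restrictions preclude equality on $[0,\infty)$.

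A technical point I would address is the compatibility of function spaces and the definition of ``solution'': the convex-integration solution of Theorem \ref{Theorem 2.1} lives in $C_{\mathfrak t}C_{x}^{1/2+\iota}$ only up to the stopping time $\mathfrak t$, whereas the Galerkin solution is global; to compare laws one should either extend the convex-integration solution beyond $\mathfrak t$ (e.g.\ by concatenating with \emph{some} martingale solution after $\mathfrak t$, using a gluing/stability lemma for martingale solutions, as is done in the cited works) or restrict attention to events measurable with respect to the trajectory on $[0,T]\cap\{\mathfrak t\ge T\}$. I would invoke such an extension lemma (the analog of the results used in \cite{HZZ19, Y20a}) so that both objects are bona fide martingale solutions on $[0,\infty)$ with the same initial law, after which the marginal comparison above is legitimate. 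Finally, the implication to path-wise non-uniqueness follows from the Yamada--Watanabe theorem, though that is not needed for the stated conclusion.

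The main obstacle I expect is not the probabilistic bookkeeping but rather ensuring that the extension of the convex-integration solution past $\mathfrak t$ does not silently change its law on $[0,T]$ and that the measurability/adaptedness is preserved through the gluing; this requires a careful statement of a ``restriction and reconstruction'' lemma for martingale solutions of \eqref{est 30} (existence of martingale solutions from arbitrary initial laws, plus a Markov-type gluing at the stopping time), which in turn rests on the Galerkin existence theory. Modulo that lemma---which is routine in spirit but must be stated precisely for the momentum SQG system with linear multiplicative noise---the proof is the short marginal-comparison argument sketched above.
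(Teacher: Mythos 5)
Your proposal is correct and follows essentially the same route as the paper: compare the Galerkin solution obeying \eqref{est 37} with the convex-integration solution of Theorem \ref{Theorem 2.1}, extended past the stopping time by the gluing construction (Propositions \ref{Proposition 4.4}--\ref{Proposition 4.5} and Lemmas \ref{Lemma 4.2}--\ref{Lemma 4.3}), and use $\kappa K^{2}\geq 1$ to contradict the energy inequality at time $T$. The "restriction and reconstruction" lemma you flag as the main technical point is exactly what the paper supplies in Section \ref{Section 4}, with the final passage from non-uniqueness of joint laws to non-uniqueness in law handled by Cherny's theorem.
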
 
\noindent Previous applications of convex integration technique on the stochastic SQG equations, namely \cite{BLW23, HZZ22a, HLZZ23, Y23a}, were in the case of an additive noise. To the best of the author's knowledge, Theorems \ref{Theorem 2.1}-\ref{Theorem 2.2} represent first non-uniqueness results in the case of linear multiplicative noise. 

\subsection{New ideas and novelties to overcome the difficulties}
Let us briefly describe new ideas and novelties of the proof of Theorems \ref{Theorem 2.1}-\ref{Theorem 2.2} that were needed to overcome the difficulties described in Remark \ref{Remark 1.3}. 

Concerning the first difficulty of $(\nabla y_{0})^{T} \cdot \Lambda^{2-\gamma_{2}} y_{0}$ in \eqref{est 434}, even though this term certainly does not vanish, remarkably, it turns out that when $y_{0}$ is a shear flow, this problematic term can be written in a gradient form. This allows us to use the unique loophole of convex integration schemes, namely to choose pressure to absorb it so that the Reynolds stress $\mathring{R}_{0}$ does not need to take this term into account (see Equations \eqref{est 322} and \eqref{est 444}). Consequently, $\mathring{R}_{0}$ is freed from the restriction of the lower bound that we saw in \eqref{est 478}.   

Concerning the second issue of the stochastic commutator error $R_{Com2}$ in \eqref{est 462}, our idea is to first use \eqref{est 164} to rewrite it as 
\begin{subequations}\label{est 389}
\begin{align}
\divergence R_{\text{Com2}} \overset{\eqref{est 462}\eqref{est 164}}{=}&  (\Upsilon - \Upsilon_{l}) [ \Lambda^{2- \gamma_{2}} w_{q+1} \cdot \nabla w_{q+1} - (\nabla w_{q+1})^{T} \cdot \Lambda^{2- \gamma_{2}} w_{q+1}  \label{est 389a} \\
&+ \Lambda^{2- \gamma_{2}} w_{q+1} \cdot \nabla y_{l}  - (\nabla w_{q+1})^{T} \cdot \Lambda^{2- \gamma_{2}} y_{l}    \label{est 389b}\\
&+ \Lambda^{2- \gamma_{2}} y_{l} \cdot \nabla w_{q+1} - (\nabla y_{l})^{T} \cdot \Lambda^{2- \gamma_{2}} w_{q+1}    \label{est 389c} \\
&+ \Lambda^{2- \gamma_{2}} y_{l} \cdot \nabla y_{l} - (\nabla y_{l})^{T} \cdot \Lambda^{2- \gamma_{2}} y_{l} ].  \label{est 389d}
\end{align}
\end{subequations}
First, for the products of $w_{q+1}$ and $y_{l}$ in \eqref{est 389b}-\eqref{est 389c}, we can equivalently consider them with $\tilde{P}_{\approx \lambda_{q+1}}$ (see Section \ref{Section 5.1} for precise definition) applied in consideration of their Fourier support from Hypothesis \ref{Hypothesis 5.1}(a) and \eqref{est 163} and thereby gain $\lambda_{q+1}^{-1}$ from $\mathcal{B}$. Second, the products of $y_{l}$ with itself in \eqref{est 389d} are less of a concern due to a lack of singularity in $y_{q}$ in comparison to $y_{q+1}$. Finally, the remaining products of $w_{q+1}$ with itself in \eqref{est 389a} are the most singular and problematic. Upon a closer look, it turns out that these terms are very similar to the usual most problematic oscillation error $R_{O}$ from \eqref{est 345b}. Therefore, we subtract and add back $\mathring{R}_{l}$ and rewrite $R_{\text{Com2}}$ from \eqref{est 389} as follows: 
\begin{equation}\label{est 390}
R_{\text{Com2}} \overset{\eqref{est 389} \eqref{est 38}}{=} \sum_{k=1}^{3}  R_{\text{Com2,k}} 
\end{equation} 
where 
\begin{subequations}\label{est 391}
\begin{align}
& R_{\text{Com2,1}} \triangleq - (\Upsilon - \Upsilon_{l}) \Upsilon_{l}^{-1}  \mathring{R}_{l}, \label{est 391a}\\
& R_{\text{Com2,2}} \triangleq (\Upsilon - \Upsilon_{l}) \Upsilon_{l}^{-1}  R_{O},\label{est 391b} \\
& R_{\text{Com2,3}} \triangleq  (\Upsilon - \Upsilon_{l})  \mathcal{B}[\Lambda^{2-\gamma_{2}} w_{q+1}^{\bot} \nabla^{\bot} \cdot y_{l} + \Lambda^{2-\gamma_{2}} y_{l}^{\bot} \nabla^{\bot} \cdot w_{q+1} + \Lambda^{2-\gamma_{2}} y_{l}^{\bot} \nabla^{\bot} \cdot y_{l} ].\label{est 391c}
\end{align}
\end{subequations}
In contrast to typical convex integration schemes where the importance of canceling out the previous error $\mathring{R}_{l}$ is always emphasized, we are now left with $\mathring{R}_{l}$ in $R_{\text{Com2,1}}$ of \eqref{est 391a}. Nevertheless, it turns out that the difference of $\Upsilon - \Upsilon_{l}$ that can provide us with $l^{\frac{1}{2} - 2 \delta}$ is enough to handle this term. As a result, we will estimate the oscillation error $\lVert R_{O} \rVert_{C_{t,x,q+1}}$ in \eqref{est 307} and effectively make use of it twice, the second time in \eqref{est 394} to estimate $R_{\text{Com2,2}}$ that is defined via \eqref{est 391b}. This approach seems to be new. Let us estimate all of $R_{\text{Com2,k}}$ for $k \in \{1,2,3\}$ separately and carefully in Section \ref{Section 5.2.6}.

\begin{remark}\label{Remark 2.1} 
Additionally, we continue to adopt all the new ideas from \cite{Y23a} to adjust the deterministic convex integration scheme of \cite{BSV19} to the stochastic case as follows. 
\begin{enumerate} 
\item We replace the ``$\rho_{j}$'' which is part of the definition of the amplitude function $a_{k,j}$ in \cite[Equations (4.14)-(4.15)]{BSV19} by $\delta_{q+1}$. This simplification allows us to drop both inductive hypotheses \cite[Equations (3.8)--(3.9)]{BSV19} concerning the prescribed energy $e(t)$. 
\item We replace $\mathring{R}_{q}(\tau_{q+1} j, x)$ in \cite[Equation (4.12b)]{BSV19} by $\mathring{R}_{l}(\tau_{q+1} j, x)$; this is crucial allowing us to drop one of the inductive hypothesis \eqref{est 13} from \cite[Equation (3.7)]{BSV19} because the Reynolds stress in the stochastic case cannot be differentiable in time (see Equation \eqref{est 383} and the discussion thereafter). It also allows us to drop the inductive hypothesis \cite[Equation (3.6)]{BSV19} (see \cite[Section 2.2]{Y23a} for details). 
\item Finally, we define differently from \cite[Equation (136)]{Y23a}, 
\begin{equation}\label{est 349} 
D_{t,q} \triangleq \partial_{t} + \Upsilon_{l} \Lambda^{2-\gamma_{2}} y_{l} \cdot \nabla 
\end{equation}
to handle the Reynolds stress transport error. The multiplication by ``$\Upsilon_{l}$'' in \eqref{est 349} is crucial upon employing the $C_{x}^{\alpha}$-type convex integration schemes to stochastic PDEs forced by linear multiplicative noise, as the author discovered in \cite[Equation (195)]{Y21c}. On the other hand,  ``$\Lambda^{2-\gamma_{2}}$'' in \eqref{est 349} is new and important considering the fact that the nonlinear term of the momentum SQG equations is more singular than those of many other PDEs on which convex integration was successfully employed. 
\end{enumerate} 
\end{remark}

We introduce a minimum amount of notations that are immediately necessary in Section \ref{Section 3}, leaving the rest in the Appendix for completeness. We prove Theorem \ref{Theorem 2.2} assuming Theorem \ref{Theorem 2.1} in Section \ref{Section 4}, and prove Theorem \ref{Theorem 2.1} in Section \ref{Section 5}.  

\section{Preliminaries}\label{Section 3}
Let us denote the spatial mean of $f$ by $\fint_{\mathbb{T}^{2}} f dx \triangleq \lvert \mathbb{T}^{2} \rvert^{-1} \int_{\mathbb{T}^{2}} f(x) dx$ and then $C_{0}^{\infty} \triangleq \{f \in C^{\infty} (\mathbb{T}^{2}): \fint_{\mathbb{T}^{2}} f dx = 0 \}$ while $C_{0,\sigma}^{\infty} \triangleq \{f \in C_{0}^{\infty} (\mathbb{T}^{2}): \nabla\cdot f = 0 \}$. We denote the Sobolev spaces by $\dot{H}_{\sigma}^{s} \triangleq \{f \in \dot{H}^{s}(\mathbb{T}^{2}): \fint_{\mathbb{T}^{2}} f dx = 0, \nabla\cdot f = 0 \}$ and $H_{\sigma}^{s}$ similarly for any $s \in \mathbb{R}$. We let $\mathbb{P} \triangleq \Id + \mathcal{R} \otimes \mathcal{R}$ represent the Leray projection onto the space of divergence-free vector fields. We define $\lVert \cdot \rVert_{C_{t,x}^{N}} \triangleq \sum_{0 \leq k + \lvert \alpha \rvert \leq N} \lVert \partial_{t}^{k} D^{\alpha} \cdot \rVert_{L_{t,x}^{\infty}}$ where $k \in \mathbb{N}_{0}$ and $\alpha$ is a multi-index. For all $p \in [1,\infty]$, $\iota_{1} \in \mathbb{N}_{0}$, and $\iota_{2} \in \mathbb{R}$ and any sufficiently smooth function $f$, we define the following quantities:
\begin{subequations}\label{est 00}
\begin{align}
& \lVert f \rVert_{C_{t,x,q}^{\iota_{1}}} \triangleq \lVert f \rVert_{C_{[t_{q}, t], x}^{\iota_{1}}} \triangleq \sup_{s \in [t_{q}, t]} \sum_{0 \leq j + \lvert \alpha \rvert \leq \iota_{1}} \lVert \partial_{s}^{j} D^{\alpha} f(s) \rVert_{C_{x}}, \hspace{2mm} \lVert f \rVert_{C_{t,q} L_{x}^{p}} \triangleq \sup_{s \in [t_{q}, t]} \lVert f(s) \rVert_{L_{x}^{p}} \\
&\lVert f \rVert_{C_{t,q}^{j} C_{x}^{\iota_{1}}} \triangleq \sum_{0\leq k \leq j, 0 \leq \lvert \alpha \rvert \leq \iota_{1}} \lVert \partial_{s}^{k} D^{\alpha} f(s) \rVert_{C_{[t_{q}, t], x}}, \hspace{15mm} \lVert f \rVert_{C_{t,q} H_{x}^{\iota_{2}}} \triangleq \sup_{s \in [t_{q}, t]} \lVert f(s) \rVert_{H_{x}^{\iota_{2}}}, 
\end{align}
\end{subequations} 
where $t_{q}$ was defined in \eqref{est 463}. 

Given any Polish space $H$, we let $\mathcal{B}(H)$ denote the $\sigma$-algebra of Borel sets in $H$. We denote the $L^{2}(\mathbb{T}^{2})$-inner products, a duality pairing of $\dot{H}^{-\frac{1}{2}}(\mathbb{T}^{2})-\dot{H}^{\frac{1}{2}}(\mathbb{T}^{2})$, and a quadratic variation of $A$ and $B$, respectively by $\langle \cdot, \cdot \rangle$, $\langle \cdot, \cdot \rangle_{\dot{H}_{x}^{-\frac{1}{2}}-\dot{H}_{x}^{\frac{1}{2}}}$, 
and $\langle\langle A, B \rangle \rangle$. Additionally, we define $\langle \langle A \rangle \rangle \triangleq \langle \langle A, A \rangle \rangle$. For any Hilbert space $U$, we denote by $L_{2}(U, \dot{H}_{\sigma}^{s})$ with $s \in \mathbb{R}_{+}$ the space of all Hilbert-Schmidt operators from $U$ to $\dot{H}_{\sigma}^{s}$ with norm $\lVert \cdot \rVert_{L_{2}(U, \dot{H}_{\sigma}^{s})}$. We impose on $G: \dot{H}_{\sigma}^{1- \frac{\gamma_{2}}{2}} \mapsto L_{2} (U, \dot{H}_{\sigma}^{1 - \frac{\gamma_{2}}{2}})$ to be $\mathcal{B} (\dot{H}_{\sigma}^{1- \frac{\gamma_{2}}{2}})/\mathcal{B} (L_{2} (U, \dot{H}_{\sigma}^{1- \frac{\gamma_{2}}{2}}))$-measurable and satisfy 
\begin{equation}\label{est 60} 
\lVert G(\phi ) \rVert_{L_{2}(U, \dot{H}_{\sigma}^{1- \frac{\gamma_{2}}{2}})} \leq C(1+ \lVert \phi \rVert_{\dot{H}_{x}^{1- \frac{\gamma_{2}}{2}}}), \hspace{3mm} \lim_{j\to\infty} \lVert G(\psi^{j})^{\ast} \phi - G(\psi)^{\ast} \phi \rVert_{U} = 0 
\end{equation} 
for all $\phi, \psi^{j}, \psi \in C^{\infty} (\mathbb{T}^{2}) \cap \dot{H}_{\sigma}^{1- \frac{\gamma_{2}}{2}}$ such that $\lim_{j\to\infty} \lVert \psi^{j} - \psi \rVert_{\dot{H}_{x}^{1- \frac{\gamma_{2}}{2}}} = 0$. We assume the existence of another Hilbert space $U_{1}$ such that the embedding $U \subset U_{1}$ is Hilbert-Schmidt and define 
\begin{equation*} 
\bar{\Omega} \triangleq C([0,\infty); (H_{\sigma}^{4})^{\ast} \times U_{1}) \cap L_{\text{loc}}^{\infty} ([0,\infty); \dot{H}_{\sigma}^{1- \frac{\gamma_{2}}{2}} \times U_{1}) 
\end{equation*} 
where $(H_{\sigma}^{4})^{\ast}$ denotes the dual of $H_{\sigma}^{4}$. Additionally, we let $\mathcal{P} (\bar{\Omega})$ represent the set of all probability measures on $(\bar{\Omega}, \bar{\mathcal{B}})$ where $\bar{\mathcal{B}}$ is the Borel $\sigma$-algebra on $\bar{\Omega}$. Furthermore, we define the canonical process on $\bar{\Omega}$ by $(\xi, \theta): \bar{\Omega} \mapsto (H_{\sigma}^{4})^{\ast} \times U_{1}$ to satisfy $(\xi, \theta)_{t}(\omega) \triangleq \omega(t)$. Finally, for $t \geq 0$ we define 
\begin{equation*}
\bar{\mathcal{B}}^{t} \triangleq \sigma \{ (\xi, \theta)(s): s \geq t \}, \hspace{3mm} \bar{\mathcal{B}}_{t}^{0} \triangleq \sigma \{(\xi, \theta)(s): s \leq t \}, \hspace{3mm} \bar{\mathcal{B}}_{t} \triangleq \cap_{s > t} \bar{\mathcal{B}}_{s}^{0}. 
\end{equation*} 

\section{Proof of Theorem \ref{Theorem 2.2}}\label{Section 4}
Throughout this Section \ref{Section 4} we only consider the case $\gamma_{2} = 1$ in \eqref{est 30} because Proposition \ref{Proposition 4.1} can be proven only for this special case. We start with the definitions of a solution to 
\begin{subequations}\label{est 480} 
\begin{align} 
&dv + [(u\cdot\nabla) v - (\nabla v)^{T} \cdot u + \nabla p + \Lambda^{\gamma_{1}} v]dt = G(v)dD, \\
&u = \Lambda v,  \hspace{2mm} \nabla\cdot v = 0, 
\end{align}
\end{subequations}
represents a stochastic force because the materials presented in Definitions \ref{Definition 4.1}-\ref{Definition 4.2}, Proposition \ref{Proposition 4.1}, and Lemmas \ref{Lemma 4.2}-\ref{Lemma 4.3} apply to a general infinite-dimensional stochastic perturbation of the system \eqref{est 30}; its special case includes $G(v) dD = vdB$ in \eqref{est 30}. 

\begin{define}\label{Definition 4.1}
Fix $\iota \in (0,1)$. Let $s \geq 0$, $\xi^{\text{in}} \in \dot{H}_{\sigma}^{\frac{1}{2}}$, and $\theta^{\text{in}} \in U_{1}$. Then $P \in \mathcal{P} (\bar{\Omega})$ is a probabilistically weak solution to \eqref{est 480} with initial data $(\xi^{\text{in}}, \theta^{\text{in}})$ at initial time $s$ if 
\begin{enumerate}[wide=0pt]
\item [](M1) $P (\{ \xi(t) = \xi^{\text{in}}, \theta(t) = \theta^{\text{in}} \hspace{1mm} \forall \hspace{1mm} t \in [0,s] \}) = 1$ and for all $l \in \mathbb{N}$, 
\begin{equation*}
P \left( \left\{ (\xi,\theta) \in \bar{\Omega}: \int_{0}^{l} \lVert G(\xi(r)) \rVert_{L_{2} (U, \dot{H}_{\sigma}^{\frac{1}{2}})}^{2} dr < \infty \right\} \right) = 1, 
\end{equation*} 
\item [](M2) under $P$, $\theta$ is a cylindrical $(\bar{\mathcal{B}}_{t})_{t\geq s}$-Wiener process on $U$ starting from initial data $\theta^{\text{in}}$ at initial time $s$ and for all $\psi^{k} \in C^{\infty} (\mathbb{T}^{2}) \cap \dot{H}_{\sigma}^{\frac{1}{2}}$ and $t\geq s$,  
\begin{align}
&\langle \xi(t) - \xi(s), \psi^{k} \rangle - \int_{s}^{t} \sum_{i,j=1}^{2} \langle \Lambda \xi_{i}, \partial_{i} \psi_{j}^{k} \xi_{j} \rangle_{\dot{H}_{x}^{-\frac{1}{2}} -\dot{H}_{x}^{\frac{1}{2}}} \nonumber \\
& \hspace{20mm} - \frac{1}{2} \langle \partial_{i} \xi_{j}, [\Lambda,  \psi_{i}^{k}] \xi_{j} \rangle_{\dot{H}_{x}^{-\frac{1}{2}}-\dot{H}_{x}^{\frac{1}{2}}} - \langle \xi, \Lambda^{\gamma_{1}} \psi^{k} \rangle dr = \int_{s}^{t} \langle \psi^{k}, G(\xi(r)) d\theta(r) \rangle, \label{est 2}
\end{align}
\item [](M3) for any $q \in \mathbb{N}$, there exists a function $t \mapsto C_{t,q} \in\mathbb{R}_{+}$ for all $t \geq s$ such that
\begin{equation}\label{est 70}
\mathbb{E}^{P} \left[ \sup_{r \in [0,t]} \lVert \xi(r) \rVert_{\dot{H}_{x}^{\frac{1}{2}}}^{2q} + \int_{s}^{t} \lVert \xi(r) \rVert_{\dot{H}_{x}^{\frac{1}{2} + \iota}}^{2} dr\right] \leq C_{t,q} (1+ \lVert \xi^{\text{in}} \rVert_{\dot{H}_{x}^{\frac{1}{2}}}^{2q}). 
\end{equation} 
\end{enumerate}
The set of all such probabilistically weak solutions with the same constant $C_{t,q}$ in \eqref{est 70} for every $q \in \mathbb{N}$ and $t\geq s$ will be denoted by $\mathcal{W} ( s, \xi^{\text{in}}, \theta^{\text{in}}, \{C_{t,q} \}_{q\in\mathbb{N}, t \geq s})$.  
\end{define} 

\begin{define}\label{Definition 4.2}
Fix $\iota \in (0,1)$. Let $s \geq 0$, $\xi^{\text{in}} \in \dot{H}_{\sigma}^{\frac{1}{2}}$, and $\theta^{\text{in}} \in U_{1}$. Let $\tau \geq s$ be a $(\bar{\mathcal{B}}_{t})_{t\geq s}$-stopping time and set 
\begin{equation*}
\bar{\Omega}_{\tau} \triangleq \{\omega( \cdot \wedge \tau(\omega)): \omega \in \bar{\Omega} \} = \{ \omega \in \bar{\Omega}: (\xi, \theta) (t,\omega) = (\xi, \theta) (t\wedge \tau(\omega), \omega)  \hspace{1mm} \forall \hspace{1mm} t \geq 0\}. 
\end{equation*} 
Then $P \in \mathcal{P} (\bar{\Omega}_{\tau})$ is a probabilistically weak solution to \eqref{est 480} on $[s,\tau]$ with initial data $(\xi^{\text{in}}, \theta^{\text{in}})$ at initial time $s$ if 
\begin{enumerate}[wide=0pt]
\item [](M1) $P (\{ \xi(t) = \xi^{\text{in}}, \theta(t) = \theta^{\text{in}} \hspace{1mm} \forall \hspace{1mm} t \in [0,s] \}) = 1$ and for all $l \in \mathbb{N}$, 
\begin{equation*}
P \left( \left\{ (\xi,\theta) \in \bar{\Omega}: \int_{0}^{l \wedge \tau} \lVert G(\xi(r)) \rVert_{L_{2} (U, \dot{H}_{\sigma}^{\frac{1}{2}})}^{2} dr < \infty \right\} \right) = 1, 
\end{equation*} 
\item [](M2) under $P$, $\langle \theta (\cdot \wedge \tau), l^{i} \rangle_{U}$ where $\{l^{i}\}_{i\in\mathbb{N}}$ is an orthonormal basis of $U$, is a continuous, square-integrable $(\bar{\mathcal{B}}_{t})_{t\geq s}$-martingale with initial data $\langle \theta^{\text{in}}, l^{i}  \rangle$ at initial time $s$ with a quadratic variation process given by $(t\wedge \tau - s) \lVert l^{i} \rVert_{U}^{2}$ and for every $\psi^{k} \in C^{\infty} (\mathbb{T}^{2}) \cap \dot{H}_{\sigma}^{\frac{1}{2}}$ and $t \geq s$,   
\begin{align*}
&\langle \xi(t \wedge \tau) - \xi(s), \psi^{k} \rangle - \int_{s}^{t\wedge \tau} \sum_{i,j=1}^{2} \langle \Lambda \xi_{i}, \partial_{i} \psi_{j}^{k} \xi_{j} \rangle_{\dot{H}_{x}^{-\frac{1}{2}} -\dot{H}_{x}^{\frac{1}{2}}}  \\
& \hspace{12mm} - \frac{1}{2} \langle \partial_{i} \xi_{j}, [\Lambda,  \psi_{i}^{k}] \xi_{j} \rangle_{\dot{H}_{x}^{-\frac{1}{2}}-\dot{H}_{x}^{\frac{1}{2}}} - \langle \xi, \Lambda^{\gamma_{1}} \psi^{k} \rangle dr  = \int_{s}^{t \wedge \tau} \langle \psi^{k}, G(\xi(r)) d\theta(r) \rangle, \nonumber
\end{align*} 
\item [](M3) for any $q \in \mathbb{N}$, there exists a function $t \mapsto C_{t,q} \in\mathbb{R}_{+}$ for all $t \geq s$ such that
\begin{equation*}
\mathbb{E}^{P} \left[ \sup_{r \in [0,t \wedge \tau]} \lVert \xi(r) \rVert_{\dot{H}_{x}^{\frac{1}{2}}}^{2q} + \int_{s}^{t \wedge \tau} \lVert \xi(r) \rVert_{\dot{H}_{x}^{\frac{1}{2} + \iota}}^{2} dr\right] \leq C_{t,q} (1+ \lVert \xi^{\text{in}} \rVert_{\dot{H}_{x}^{\frac{1}{2}}}^{2q}). 
\end{equation*} 
\end{enumerate} 
\end{define} 

Let $\bar{\mathcal{B}}_{\tau}$ denote the $\sigma$-algebra associated to any given stopping time $\tau$. The following proposition can be deduced from  \cite[Proposition 4.1]{Y23a} identically to how \cite[Theorem 5.1]{HZZ19} was deduced from \cite[Theorem 3.1]{HZZ19}; we leave its proof in the Appendix for completeness.

\begin{proposition}\label{Proposition 4.1}
\begin{enumerate}
\item For every $(s, \xi^{\text{in}}, \theta^{\text{in}}) \in [0,\infty) \times \dot{H}_{\sigma}^{\frac{1}{2}} \times U_{1}$, there exists a probabilistically weak solution $P \in \mathcal{P} (\bar{\Omega})$ to \eqref{est 30} with initial data $(\xi^{\text{in}}, \theta^{\text{in}})$ at initial time $s$ according to Definition \ref{Definition 4.1}. 
\item Moreover, if there exists a family $\{(s_{l}, \xi_{l}, \theta_{l}) \}_{l \in \mathbb{N}} \subset [0,\infty) \times \dot{H}_{\sigma}^{\frac{1}{2}} \times U_{1}$ such that $\lim_{l\to\infty} \lVert (s_{l}, \xi_{l},\theta_{l}) - (s, \xi^{\text{in}},\theta^{\text{in}}) \rVert_{\mathbb{R} \times \dot{H}_{x}^{\frac{1}{2}} \times U_{1}} = 0$ and $P_{l} \in \mathcal{W} ( s_{l}, \xi_{l}, \theta_{l}, \{C_{t,q} \}_{q\in \mathbb{N}, t \geq s_{l}} )$, then there exists a subsequence $\{P_{l_{k}} \}_{k\in\mathbb{N}}$ and $P \in \mathcal{W} (s, \xi^{\text{in}},\theta^{\text{in}}, \{C_{t,q} \}_{q\in\mathbb{N}, t \geq s})$ such that $P_{l_{k}}$ converges weakly to $P$.
\end{enumerate} 
\end{proposition}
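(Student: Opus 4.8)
The plan is to follow the argument of \cite[Proposition 4.1]{Y23a} essentially verbatim, the one modification being that the additive forcing used there is replaced by the general stochastic perturbation $G(\xi)\,d\theta$ in the formulation of Definition \ref{Definition 4.1} (which specializes to $\xi\,dB$ in \eqref{est 30}, for which the conditions in \eqref{est 60} hold trivially since $G$ is then linear). This replacement is carried out exactly as in the passage from \cite[Theorem 3.1]{HZZ19} to \cite[Theorem 5.1]{HZZ19} for the Navier-Stokes equations.

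For part (1) I would construct a solution by Galerkin approximation. Let $\Pi_{n}$ denote the Fourier truncation onto modes $\lvert k \rvert \leq n$; using the identity \eqref{est 38}, the $n$-th Galerkin system is a finite-dimensional It\^o SDE for $\xi^{n} = \Pi_{n}\xi$ with (Leray-projected) drift $\Pi_{n}[\mathbb{P}((\Lambda\xi^{n})^{\bot}(\nabla^{\bot}\cdot\xi^{n})) - \Lambda^{\gamma_{1}}\xi^{n}]$ and diffusion $\Pi_{n}G(\xi^{n})$, which admits a global-in-time solution thanks to the continuity of the coefficients and the linear growth in \eqref{est 60}. The crucial a priori bound comes from It\^o's formula applied to $\lVert \xi^{n}(t) \rVert_{\dot{H}_{x}^{\frac{1}{2}}}^{2q}$: the nonlinear contribution is controlled because the cancellation $(u\cdot\nabla)v - (\nabla v)^{T}\cdot u = u^{\bot}(\nabla^{\bot}\cdot v)$ endows the nonlinearity with a transport structure when paired in $\dot{H}^{\frac{1}{2}}$, the dissipation $\Lambda^{\gamma_{1}}$ furnishes the $\int \lVert \xi^{n} \rVert_{\dot{H}_{x}^{\frac{1}{2}+\iota}}^{2}$ term once $\iota$ is small relative to $\gamma_{1}/2$, and the stochastic term is handled by the Burkholder-Davis-Gundy inequality together with \eqref{est 60}; this yields \eqref{est 70} uniformly in $n$ with $n$-independent constants $\{C_{t,q}\}$. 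Reading the equation for $\xi^{n}$ in a negative Sobolev space gives in addition a uniform fractional-in-time modulus of continuity, and the two facts together imply tightness of the laws of $(\xi^{n},\theta)$ on $\bar{\Omega}$; since $\bar{\Omega}$ carries the weak-$\ast$ topology on its $L^{\infty}_{\text{loc}}\dot{H}_{\sigma}^{\frac{1}{2}}$ component, I would invoke the Jakubowski-Skorokhod representation theorem to pass to almost surely convergent copies on a new probability space. Passing to the limit in the weak formulation \eqref{est 2} then requires strong convergence of $\xi^{n}$ in $L^{2}_{\text{loc}}([0,\infty); \dot{H}_{\sigma}^{\frac{1}{2} - \epsilon})$ to treat the quadratic term, which follows from the uniform bounds by an Aubin-Lions argument; the stochastic integral converges by the second condition in \eqref{est 60}; and a standard martingale representation (Doob) argument identifies the limit as a probabilistically weak solution in the sense of Definition \ref{Definition 4.1}, carrying the same $\{C_{t,q}\}$.

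For part (2) the solutions $P_{l} \in \mathcal{W}(s_{l}, \xi_{l}, \theta_{l}, \{C_{t,q}\})$ already satisfy \eqref{est 70} with the \emph{common} constants $\{C_{t,q}\}$, so the same tightness and compactness machinery applies directly to the family $\{P_{l}\}_{l}$: one extracts a weakly convergent subsequence, applies the Jakubowski-Skorokhod theorem, passes to the limit in the weak formulation, and verifies that the limit $P$ lies in $\mathcal{W}(s, \xi^{\text{in}}, \theta^{\text{in}}, \{C_{t,q}\})$. Here the bound \eqref{est 70} survives in the limit by Fatou's lemma and lower semicontinuity of the $\dot{H}^{\frac{1}{2}}$ and $\dot{H}^{\frac{1}{2}+\iota}$ norms under the $\bar{\Omega}$-topology, while the convergence $\lVert (s_{l},\xi_{l},\theta_{l}) - (s,\xi^{\text{in}},\theta^{\text{in}}) \rVert_{\mathbb{R} \times \dot{H}_{x}^{\frac{1}{2}} \times U_{1}} \to 0$ pins down the initial condition (M1).

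The main obstacle is the one pervading the momentum SQG analysis: the nonlinearity is one derivative more singular than that of the Navier-Stokes equations. One must verify that the terms appearing in the weak formulation \eqref{est 2}, involving $\Lambda\xi$ tested in the $\dot{H}^{-\frac{1}{2}}$-$\dot{H}^{\frac{1}{2}}$ duality and the zeroth-order commutator $[\Lambda,\psi^{k}_{i}]$, are well defined for $\xi \in \dot{H}_{\sigma}^{\frac{1}{2}}$ and stable under the available modes of convergence, and that the compactness and time-regularity estimates are strong enough to justify the limit in the quadratic term despite this loss of a derivative; this is precisely where the dissipation-induced $L^{2}_{t}\dot{H}^{\frac{1}{2}+\iota}$ control in \eqref{est 70} is indispensable, and why $\iota$ must be chosen small relative to $\gamma_{1}$. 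By contrast, the upgrade of the noise from additive to $G(\xi)\,d\theta$ is routine once \eqref{est 60} is available, exactly as in \cite{HZZ19}.
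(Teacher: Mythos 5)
Your proposal follows essentially the same route as the paper: the paper obtains part (1) by invoking the Galerkin-based existence result of \cite[Proposition 4.1]{Y23a} together with the martingale representation theorem, and proves part (2) by exactly the tightness--Prokhorov--Skorokhod--martingale-problem scheme you outline (tightness of the Wiener laws plus the compact sets built from the uniform bound \eqref{est 70}, followed by identification of the limit via the quadratic and cross variations of $M_{t,s}^{\xi,k}$ using \eqref{est 60}). The points you single out as delicate --- the $\dot{H}^{1/2}$ energy structure coming from \eqref{est 38}, the $L^{2}_{t}\dot{H}^{1/2+\iota}$ control from the dissipation, and the stability of the commutator term $[\Lambda,\psi^{k}_{i}]$ --- are precisely the ones handled in the cited arguments, so no genuine gap is present.
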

Proposition \ref{Proposition 4.1} leads to the following two lemmas due to \cite[Propositions 5.2 and 5.3]{HZZ19}. 
\begin{lemma}\label{Lemma 4.2}
\rm{(\cite[Proposition 5.2]{HZZ19})} Let $\tau$ be a bounded $(\bar{\mathcal{B}}_{t})_{t \geq 0}$-stopping time. Then, for every $\omega \in \bar{\Omega}$, there exists $Q_{\omega} \in \mathcal{P}(\bar{\Omega})$ such that for $\omega \in \{\xi(\tau) \in \dot{H}_{\sigma}^{\frac{1}{2}} \}$ 
\begin{subequations}
\begin{align}
& Q_{\omega} ( \{ \omega' \in \bar{\Omega}: \hspace{0.5mm} \hspace{1mm}  ( \xi, \theta) (t, \omega') = (\xi, \theta) (t,\omega) \hspace{1mm} \forall \hspace{1mm} t \in [0, \tau(\omega)] \}) = 1, \\
& Q_{\omega} (A) = R_{\tau(\omega), \xi(\tau(\omega), \omega), \theta(\tau(\omega), \omega)} (A) \hspace{1mm} \forall \hspace{1mm} A \in \bar{\mathcal{B}}^{\tau(\omega)}, 
\end{align} 
\end{subequations}
where $R_{\tau(\omega), \xi(\tau(\omega), \omega), \theta(\tau(\omega), \omega)} \in \mathcal{P} (\bar{\Omega})$ is a probabilistically weak solution to \eqref{est 30} with initial data $(\xi(\tau(\omega), \omega), \theta(\tau(\omega), \omega))$ at initial time $\tau(\omega)$. Moreover, for every $A \in \bar{\mathcal{B}}$, the mapping $\omega \mapsto Q_{\omega}(A)$ is $\bar{\mathcal{B}}_{\tau}$-measurable.  
\end{lemma}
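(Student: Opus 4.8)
The plan is to carry out the Stroock--Varadhan construction of a measurable family of ``solution continuations'' along a prescribed path, exactly as in \cite[Proposition 5.2]{HZZ19}, with Proposition \ref{Proposition 4.1} supplying the two ingredients that make the construction work: non-emptiness of the solution set and its stability under convergence of the initial data.

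First I would record that, by Proposition \ref{Proposition 4.1}(1), for every $(s,\xi^{\text{in}},\theta^{\text{in}}) \in [0,\infty) \times \dot{H}_{\sigma}^{\frac{1}{2}} \times U_{1}$ the set of probabilistically weak solutions to \eqref{est 30} with data $(\xi^{\text{in}},\theta^{\text{in}})$ at initial time $s$ is non-empty, and by Proposition \ref{Proposition 4.1}(2) it is sequentially compact in $\mathcal{P}(\bar{\Omega})$ and the associated multifunction has closed graph; hence this multifunction is Borel-measurable with closed values in the Polish space $\mathcal{P}(\bar{\Omega})$, and the Kuratowski--Ryll-Nardzewski (or Jankov--von Neumann) measurable selection theorem produces a Borel map $(s,\xi^{\text{in}},\theta^{\text{in}}) \mapsto R_{s,\xi^{\text{in}},\theta^{\text{in}}}$ selecting one such solution. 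Next, for each $\omega \in \bar{\Omega}$ I would define $Q_{\omega}$ as the law of the canonical process that follows the deterministic path $\omega$ on $[0,\tau(\omega)]$ and then, from time $\tau(\omega)$ onward, evolves according to $R_{\tau(\omega), \xi(\tau(\omega),\omega), \theta(\tau(\omega),\omega)}$ on the event $\{\xi(\tau) \in \dot{H}_{\sigma}^{\frac{1}{2}}\}$ (on the complement the precise value of $Q_{\omega}$ is immaterial and one may take, e.g., $R_{\tau(\omega),0,\theta(\tau(\omega),\omega)}$ instead). Because the paths in $\bar{\Omega}$ are continuous into $(H_{\sigma}^{4})^{\ast} \times U_{1}$ and $R_{\tau(\omega), \xi(\tau(\omega),\omega), \theta(\tau(\omega),\omega)}$ starts precisely at $(\xi(\tau(\omega),\omega), \theta(\tau(\omega),\omega))$, this concatenation yields an element of $\mathcal{P}(\bar{\Omega})$; the first displayed identity holds by construction, and the second holds because the restriction of $Q_{\omega}$ to $\bar{\mathcal{B}}^{\tau(\omega)}$ is, again by construction, equal to $R_{\tau(\omega), \xi(\tau(\omega),\omega), \theta(\tau(\omega),\omega)}$, which is a probabilistically weak solution with data $(\xi(\tau(\omega),\omega), \theta(\tau(\omega),\omega))$ at initial time $\tau(\omega)$ in the sense of Definition \ref{Definition 4.1}.

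For the measurability assertion I would use that $\omega \mapsto (\tau(\omega), \xi(\tau(\omega),\omega), \theta(\tau(\omega),\omega))$ and $\omega \mapsto \omega(\cdot \wedge \tau(\omega))$ are $\bar{\mathcal{B}}_{\tau}$-measurable, a consequence of the definition of the stopping-time $\sigma$-algebra and of the $(\bar{\mathcal{B}}_{t})$-adaptedness of the canonical process, and then compose with the Borel selection above and with the Borel concatenation map $\bar{\Omega}_{\tau} \times \mathcal{P}(\bar{\Omega}) \to \mathcal{P}(\bar{\Omega})$ to conclude that $\omega \mapsto Q_{\omega}$ is $\bar{\mathcal{B}}_{\tau}/\mathcal{B}(\mathcal{P}(\bar{\Omega}))$-measurable, hence $\omega \mapsto Q_{\omega}(A)$ is $\bar{\mathcal{B}}_{\tau}$-measurable for each $A \in \bar{\mathcal{B}}$. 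I expect the main obstacle to be the very first step: verifying that the solution multifunction is genuinely Borel with closed values, which is precisely where Proposition \ref{Proposition 4.1}(2) is indispensable, since the weak sequential compactness of $\mathcal{W}(s,\xi^{\text{in}},\theta^{\text{in}},\{C_{t,q}\}_{q\in\mathbb{N}, t\geq s})$ together with the continuity of the solution operator under $(s_{l},\xi_{l},\theta_{l}) \to (s,\xi^{\text{in}},\theta^{\text{in}})$ gives the closed-graph property needed to invoke measurable selection. The remaining points --- continuity of the pasted path at $\tau(\omega)$, preservation of (M1)--(M3) beyond $\tau(\omega)$ via the optional-stopping structure of the martingale problem \eqref{est 2}, and Borel measurability of the concatenation map on $\mathcal{P}(\bar{\Omega})$ --- are routine and are spelled out in \cite[Proposition 5.2]{HZZ19}.
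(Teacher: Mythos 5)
Your proposal is correct and follows exactly the route the paper intends: the paper provides no proof of this lemma, presenting it as a direct consequence of Proposition \ref{Proposition 4.1} via \cite[Proposition 5.2]{HZZ19}, and your sketch faithfully reproduces that cited argument (measurable selection from the solution multifunction, whose non-emptiness and closed-graph/compactness properties are supplied by Proposition \ref{Proposition 4.1}, followed by concatenation of the frozen path with the selected solution at time $\tau(\omega)$). No gaps to report.
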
 

\begin{lemma}\label{Lemma 4.3}
\rm{(\cite[Proposition 5.3]{HZZ19})} Let $\tau$ be a bounded $(\bar{\mathcal{B}}_{t})_{t\geq 0}$-stopping time, $\xi^{\text{in}}  \in \dot{H}_{\sigma}^{\frac{1}{2}}$, and $P\in \mathcal{P} (\bar{\Omega})$ be a probabilistically weak solution to \eqref{est 30} on $[0,\tau]$ with initial data $(\xi^{\text{in}}, 0)$ at initial time $0$ according to Definition \ref{Definition 4.2}. Suppose that there exists a Borel set $\mathcal{N} \subset \bar{\Omega}_{\tau}$ such that $P(\mathcal{N}) = 0$ and that $Q_{\omega}$ from Lemma \ref{Lemma 4.2} satisfies for every $\omega \in \bar{\Omega}_{\tau} \setminus \mathcal{N}$ 
\begin{equation}
Q_{\omega} (\{ \omega' \in \bar{\Omega}: \hspace{1mm}  \tau(\omega') = \tau(\omega) \}) = 1. 
\end{equation} 
Then the probability measure $P\otimes_{\tau}R \in \mathcal{P} (\bar{\Omega})$ defined by 
\begin{equation}\label{est 309} 
P \otimes_{\tau} R (\cdot) \triangleq \int_{\bar{\Omega}} Q_{\omega} (\cdot) P(d \omega)
\end{equation} 
satisfies $P\otimes_{\tau}R = P$ on the $\sigma$-algebra $\sigma \{ \xi(t\wedge \tau), \theta (t\wedge \tau), t \geq 0 \}$ and it is a probabilistically weak solution to \eqref{est 30} on $[0,\infty)$ with initial data $(\xi^{\text{in}}, 0)$ at initial time $0$. 
\end{lemma}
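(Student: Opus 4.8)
The plan is to follow the classical concatenation argument for the martingale problem associated to \eqref{est 30}, adapting the proof of \cite[Proposition 5.3]{HZZ19} to Definition \ref{Definition 4.1}. Write $\bar{P}\triangleq P\otimes_{\tau}R$, so that \eqref{est 309} reads $\bar{P}(\cdot)=\int_{\bar{\Omega}}Q_{\omega}(\cdot)\,P(d\omega)$, and recall from Lemma \ref{Lemma 4.2} that, off the $P$-null set $\mathcal{N}$, the kernel $Q_{\omega}$ is supported on paths agreeing with $\omega$ on $[0,\tau(\omega)]$, satisfies $Q_{\omega}(\{\tau=\tau(\omega)\})=1$ by hypothesis, and, restricted to $\bar{\mathcal{B}}^{\tau(\omega)}$, coincides with $R_{\tau(\omega),\xi(\tau(\omega),\omega),\theta(\tau(\omega),\omega)}$, a probabilistically weak solution to \eqref{est 30} from initial time $\tau(\omega)$. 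I will verify the agreement statement and then the three defining properties of Definition \ref{Definition 4.1} for $\bar{P}$.

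\emph{Steps 1--2 (agreement up to $\tau$, and conditions (M1), (M3)).} For $A\in\sigma\{\xi(t\wedge\tau),\theta(t\wedge\tau):t\ge0\}$, the support property of $Q_{\omega}$ together with $Q_{\omega}(\{\tau=\tau(\omega)\})=1$ forces $Q_{\omega}(A)=\mathbf{1}_{A}(\omega)$ for $P$-a.e.\ $\omega$, hence $\bar{P}(A)=\int\mathbf{1}_{A}\,dP=P(A)$; in particular $(\xi(\cdot\wedge\tau),\theta(\cdot\wedge\tau))$ has the same law under $\bar{P}$ as under $P$, and $\bar{P}(\{\xi(0)=\xi^{\text{in}},\theta(0)=0\})=1$. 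For the integrability condition in (M1) and the bound \eqref{est 70} of (M3), split the relevant time integral at $\tau$: on $[0,\cdot\wedge\tau]$ the estimate follows from the previous sentence and from $P$ solving \eqref{est 30} on $[0,\tau]$ (Definition \ref{Definition 4.2}); on $[\tau,\cdot]$ it follows, after conditioning on $\bar{\mathcal{B}}_{\tau}$, from the fact that $R_{\tau(\omega),\dots}\in\mathcal{W}(\tau(\omega),\dots,\{C_{t,q}\}_{q\in\mathbb{N},t\ge\tau(\omega)})$ satisfies (M1) and (M3) with the \emph{same} constants $C_{t,q}$. Integrating in $\omega$ against $P$ (Fubini) yields the estimates for $\bar{P}$ with those same constants.

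\emph{Step 3 (condition (M2), the heart of the matter).} Recast \eqref{est 2} as a martingale problem: for $\psi^{k}\in C^{\infty}(\mathbb{T}^{2})\cap\dot{H}_{\sigma}^{\frac12}$ put
$$\mathcal{M}^{\psi^{k}}_{t}\triangleq\langle\xi(t)-\xi(0),\psi^{k}\rangle-\int_{0}^{t}b_{\psi^{k}}(\xi(r))\,dr,$$
where $b_{\psi^{k}}$ is the drift functional on the left-hand side of \eqref{est 2}; granting (M1) and (M3), a measure in $\mathcal{P}(\bar{\Omega})$ is a probabilistically weak solution precisely when $\theta$ is a cylindrical $(\bar{\mathcal{B}}_{t})$-Wiener process on $U$ and, for every $k$ and every orthonormal basis element $l^{i}$ of $U$, the processes $\mathcal{M}^{\psi^{k}}$, $(\mathcal{M}^{\psi^{k}})^{2}-\int_{0}^{\cdot}\lVert G(\xi(r))^{\ast}\psi^{k}\rVert_{U}^{2}\,dr$, and $\mathcal{M}^{\psi^{k}}\langle\theta,l^{i}\rangle_{U}-\int_{0}^{\cdot}\langle\psi^{k},G(\xi(r))l^{i}\rangle\,dr$ are continuous $(\bar{\mathcal{B}}_{t})$-martingales. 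To verify each such property under $\bar{P}$, fix $0\le s_{1}\le s_{2}$ and a bounded $\bar{\mathcal{B}}_{s_{1}}$-measurable $h$ and use $\mathbb{E}^{\bar{P}}[h(X_{s_{2}}-X_{s_{1}})]=\int\mathbb{E}^{Q_{\omega}}[h(X_{s_{2}}-X_{s_{1}})]\,P(d\omega)$. On $\{s_{2}\le\tau(\omega)\}$ the increment is $\sigma\{\xi(t\wedge\tau),\theta(t\wedge\tau)\}$-measurable and its contribution vanishes by Step 1 and Definition \ref{Definition 4.2}(M2); on $\{s_{1}\ge\tau(\omega)\}$ it vanishes since $Q_{\omega}=R_{\tau(\omega),\dots}$ on $\bar{\mathcal{B}}^{\tau(\omega)}$ solves \eqref{est 30} from $\tau(\omega)$ and $\omega\mapsto Q_{\omega}$ is $\bar{\mathcal{B}}_{\tau}$-measurable; the intermediate case $s_{1}<\tau(\omega)<s_{2}$ is reduced to the two previous ones by inserting $X_{\tau}$, which is legitimate exactly because $Q_{\omega}(\{\tau=\tau(\omega)\})=1$. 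The same conditioning shows $\theta$ has increments on $[\tau,\infty)$ independent of $\bar{\mathcal{B}}_{\tau}$ with the correct covariance, hence is a cylindrical Wiener process under $\bar{P}$. A martingale representation theorem applied to the pair $(\mathcal{M}^{\psi^{k}},\theta)$ then recovers \eqref{est 2} for $\bar{P}$ at all $t\ge0$, so $\bar{P}$ is a probabilistically weak solution to \eqref{est 30} on $[0,\infty)$ with initial data $(\xi^{\text{in}},0)$ at initial time $0$.

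\emph{Main obstacle.} Steps 1--2 are routine measure-theoretic bookkeeping (Fubini plus the defining bound \eqref{est 70}); the substantive difficulty is establishing the martingale property of Step 3 \emph{across the random gluing time} $\tau$. This is precisely a strong-Markov-type argument, and it hinges on three inputs already at our disposal: the restart property of Lemma \ref{Lemma 4.2}, the $\bar{\mathcal{B}}_{\tau}$-measurability of $\omega\mapsto Q_{\omega}$, and the consistency condition $Q_{\omega}(\{\tau=\tau(\omega)\})=1$. Together these let one paste the pre-$\tau$ martingale behavior inherited from $P$ to the post-$\tau$ behavior of the restarted solutions $R_{\tau(\omega),\dots}$ without creating a spurious jump in the compensator or destroying the independence of the reconstructed Wiener increments.
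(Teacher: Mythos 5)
Your proposal is correct and follows essentially the same route as the argument the paper relies on: the paper gives no independent proof of Lemma \ref{Lemma 4.3} but cites \cite[Proposition 5.3]{HZZ19} (itself in the spirit of \cite{GRZ09}), whose proof is exactly the concatenation scheme you describe — agreement of $P\otimes_{\tau}R$ with $P$ on the stopped $\sigma$-algebra, transfer of (M1)/(M3) by splitting at $\tau$ and using the uniform constants $C_{t,q}$, and pasting the martingale-problem formulation of (M2) across $\tau$ via the support property of $Q_{\omega}$, the $\bar{\mathcal{B}}_{\tau}$-measurability of $\omega\mapsto Q_{\omega}(A)$, and the hypothesis $Q_{\omega}(\{\tau=\tau(\omega)\})=1$. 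Your closing step could equally be phrased, as in the paper's Appendix treatment of Proposition \ref{Proposition 4.1}, by computing the quadratic and cross variations of $\mathcal{M}^{\psi^{k}}$ and showing $\langle\langle \mathcal{M}^{\psi^{k}}-\int_{0}^{\cdot}\langle\psi^{k},G(\xi)d\theta\rangle\rangle\rangle=0$ rather than invoking martingale representation, but this is a cosmetic difference.
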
 

Now we fix an $\mathbb{R}$-valued Wiener process $B$ on $(\Omega, \mathcal{F}, \textbf{P})$ with $(\mathcal{F}_{t})_{t\geq 0}$ as its normal filtration and $U = U_{1} = \mathbb{R}$. For $\lambda \in \mathbb{N}, L > 1$, and $\delta \in (0, \frac{1}{4})$ we define 
\begin{subequations}\label{est 127} 
\begin{align}
\tau_{L}^{\lambda}(\omega) \triangleq& \inf \left\{t \geq 0:  \lvert \theta (t, \omega) \rvert > \left(L - \frac{1}{\lambda} \right)^{\frac{1}{4}} \right\} \nonumber \\
& \wedge \inf\left\{ t \geq 0: \lVert \theta(\omega) \rVert_{C_{t}^{\frac{1}{2} - 2 \delta}} > \left(L - \frac{1}{\lambda} \right)^{\frac{1}{2}} \right\} \wedge L, \label{est 127a}\\
\tau_{L}(\omega) \triangleq& \lim_{\lambda \to \infty} \tau_{L}^{\lambda}(\omega). \label{est 127b}
\end{align}
\end{subequations} 
As a Brownian path is locally H$\ddot{\mathrm{o}}$lder continuous with exponent $\alpha \in (0, \frac{1}{2})$, we can also define for $L > 1$ and $\delta \in (0, \frac{1}{4})$, 
\begin{equation}\label{est 312}
T_{L} \triangleq \inf\{t> 0: \lvert B(t) \rvert \geq L^{\frac{1}{4}} \} \wedge \inf \{t > 0: \lVert B \rVert_{C_{t}^{\frac{1}{2} - 2 \delta}} \geq L^{\frac{1}{2}} \} \wedge L. 
\end{equation} 
It follows from \cite[Lemma 3.5]{HZZ19} that $\tau_{L}^{\lambda}$ and therefore $\tau_{L}$ is a $(\bar{\mathcal{B}}_{t})_{t\geq 0}$-stopping time. For the fixed $(\Omega, \mathcal{F}, \textbf{P})$, we assume Theorem \ref{Theorem 2.1}, denote by $v$ the solution constructed by Theorem \ref{Theorem 2.1} on $[0, \mathfrak{t}]$ where $\mathfrak{t} = T_{L}$ for $L > 1$ sufficiently large. Denoting the probability law of $(v, B)$ by $\mathcal{L}(v,B)$ and fixing $P = \mathcal{L} (v, B)$, we have the following two results from previous works, mainly \cite[Propositions 5.4-5.5]{HZZ19},  by making use of the fact that $\theta(t, (v,B)) = B(t)$ for all $t \in [0, T_{L}]$ $\textbf{P}$-a.s.

\begin{proposition}\label{Proposition 4.4} 
\rm{(\cite[Proposition 5.4]{HZZ19})} Let $\tau_{L}$ be defined by \eqref{est 127b}. Then $P= \mathcal{L} (v, B)$ is a probabilistically weak solution to \eqref{est 30} on $[0, \tau_{L}]$ that satisfies Definition \ref{Definition 4.2}.
\end{proposition}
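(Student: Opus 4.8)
The plan is to follow the structure of \cite[Proposition 5.4]{HZZ19}, verifying conditions (M1)--(M3) of Definition \ref{Definition 4.2} for $P=\mathcal{L}(v,B)$ on the stopped interval $[0,\tau_{L}]$. The single structural input is that, under $P$, the canonical process satisfies $\theta(t)=B(t)$ for all $t\in[0,T_{L}]$ $\textbf{P}$-a.s.; since the thresholds $\left(L-\frac{1}{\lambda}\right)^{\frac{1}{4}}$ and $\left(L-\frac{1}{\lambda}\right)^{\frac{1}{2}}$ defining $\tau_{L}^{\lambda}$ in \eqref{est 127a} increase to $L^{\frac{1}{4}}$ and $L^{\frac{1}{2}}$, the continuity of $B$ forces $\tau_{L}((v,B))=T_{L}$ $\textbf{P}$-a.s.\ (exactly as in \cite[Lemma 3.5]{HZZ19} and the discussion following it). Consequently $P$ is supported on $\bar{\Omega}_{\tau_{L}}$, every bound that Theorem \ref{Theorem 2.1} provides on $[0,\mathfrak{t}]=[0,T_{L}]$ is available on $[0,\tau_{L}]$ under $P$, and it remains to check the three items.

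For (M1): with initial time $s=0$ and $v^{\text{in}}$ deterministic together with $B(0)=0$, we obtain $P(\{\xi(0)=v^{\text{in}},\ \theta(0)=0\})=1$ immediately, while the integrability requirement $\int_{0}^{l\wedge\tau_{L}}\lVert G(\xi(r))\rVert_{L_{2}(U,\dot{H}_{\sigma}^{\frac{1}{2}})}^{2}\,dr<\infty$ $P$-a.s.\ reduces, since $G(\xi)=\xi$ and $U=\mathbb{R}$, to $\int_{0}^{T_{L}}\lVert v(r)\rVert_{\dot{H}_{x}^{\frac{1}{2}}}^{2}\,dr<\infty$, which follows from \eqref{est 9}, the embedding $C_{x}^{\frac{1}{2}+\iota}\hookrightarrow\dot{H}_{x}^{\frac{1}{2}}$ on $\mathbb{T}^{2}$, and $T_{L}\le L$. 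For (M2): under $P$ the process $\theta(\cdot\wedge\tau_{L})$ equals $B(\cdot\wedge T_{L})$, and since $(v,B)$ is $(\mathcal{F}_{t})_{t\ge0}$-adapted it pulls $\bar{\mathcal{B}}_{t}$ back into $\mathcal{F}_{t}$, so that $B(\cdot\wedge T_{L})$ being a continuous, square-integrable $(\mathcal{F}_{t})_{t\ge0}$-martingale started at $0$ with quadratic variation $t\wedge T_{L}$ yields the corresponding assertion for $\theta(\cdot\wedge\tau_{L})$ relative to $(\bar{\mathcal{B}}_{t})_{t\ge0}$ --- precisely the requirement for the one basis vector of $U=\mathbb{R}$. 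The analytically weak formulation is obtained by testing \eqref{est 30a} against an arbitrary $\psi^{k}\in C^{\infty}(\mathbb{T}^{2})\cap\dot{H}_{\sigma}^{\frac{1}{2}}$: the pressure drops by $\nabla\cdot\psi^{k}=0$, one has $\langle\Lambda^{\gamma_{1}}v,\psi^{k}\rangle=\langle v,\Lambda^{\gamma_{1}}\psi^{k}\rangle$, and --- using $u=\Lambda v$ (hence $\nabla\cdot u=0$), integration by parts, self-adjointness of $\Lambda$, the commutator $[\Lambda,\psi_{i}^{k}]$, and the identity \eqref{est 38} --- the tested nonlinear term $\langle(u\cdot\nabla)v-(\nabla v)^{T}\cdot u,\psi^{k}\rangle$ becomes the two duality pairings $\sum_{i,j=1}^{2}\langle\Lambda\xi_{i},\partial_{i}\psi_{j}^{k}\xi_{j}\rangle_{\dot{H}_{x}^{-\frac{1}{2}}-\dot{H}_{x}^{\frac{1}{2}}}$ and $\frac{1}{2}\langle\partial_{i}\xi_{j},[\Lambda,\psi_{i}^{k}]\xi_{j}\rangle_{\dot{H}_{x}^{-\frac{1}{2}}-\dot{H}_{x}^{\frac{1}{2}}}$ of \eqref{est 2}, all finite at the $\dot{H}^{\frac{1}{2}}$ level; the stochastic term $\int_{0}^{t\wedge\tau_{L}}\langle\psi^{k},G(\xi)\,d\theta\rangle$ then coincides with $\int_{0}^{t\wedge T_{L}}\langle\psi^{k},v\,dB\rangle$, matching the noise in \eqref{est 30a}.

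For (M3): by \eqref{est 9} and $\tau_{L}\le L$, for every $q\in\mathbb{N}$ we have $\esssup_{\omega\in\Omega}\sup_{r\in[0,\tau_{L}]}\lVert v(r)\rVert_{\dot{H}_{x}^{\frac{1}{2}}}^{2q}<\infty$ and $\esssup_{\omega\in\Omega}\int_{0}^{\tau_{L}}\lVert v(r)\rVert_{\dot{H}_{x}^{\frac{1}{2}+\iota'}}^{2}\,dr<\infty$ for a suitable $\iota'\in(0,\iota)$, using $C_{x}^{\frac{1}{2}+\iota}\hookrightarrow\dot{H}_{x}^{\frac{1}{2}}$ and $C_{x}^{\frac{1}{2}+\iota}\hookrightarrow\dot{H}_{x}^{\frac{1}{2}+\iota'}$; this produces the stopped form of \eqref{est 70} with a constant $C_{t,q}$ depending only on $L$, $q$, and $\iota$, in particular independent of $\omega$. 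The main obstacle is the verification in (M2) that the solution $v$ produced by the convex integration of Theorem \ref{Theorem 2.1} genuinely satisfies the commutator form \eqref{est 2} with all duality pairings well defined at the $\dot{H}^{\frac{1}{2}}$ regularity supplied by \eqref{est 9} --- here the fact that the nonlinearity $(u\cdot\nabla)v-(\nabla v)^{T}\cdot u$ is not of divergence form forces the commutator rewriting in place of a plain integration by parts. The remaining ingredients, namely transferring the martingale property through the measurable map $(v,B)$ and invoking the already-established uniform-in-$\omega$ estimates, are routine, exactly as in \cite[Proposition 5.4]{HZZ19}.
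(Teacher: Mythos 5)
Your proposal is correct and follows exactly the route the paper takes: the paper establishes Proposition \ref{Proposition 4.4} by direct appeal to \cite[Proposition 5.4]{HZZ19} together with the single observation that $\theta(t,(v,B))=B(t)$ on $[0,T_{L}]$ and hence $\tau_{L}((v,B))=T_{L}$ $\textbf{P}$-a.s., which is precisely the structural input you identify before checking (M1)--(M3). The details you supply --- the H\"older embedding for (M1) and (M3), the pullback of the canonical filtration through the adapted map $(v,B)$ for the martingale property, and the commutator rewriting of the non-divergence-form nonlinearity needed to make sense of \eqref{est 2} at the $\dot{H}^{\frac{1}{2}}$ level --- are the standard ones and are all sound.
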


\begin{proposition}\label{Proposition 4.5}
\rm{(\cite[Proposition 5.5]{HZZ19})} Let $\tau_{L}$ be defined by \eqref{est 127b}. Then $P\otimes_{\tau_{L}}R$ defined by \eqref{est 309} corresponding to $\tau_{L}$ is a probabilistically weak solution to \eqref{est 30} on $[0,\infty)$ that satisfies Definition \ref{Definition 4.1}. 
\end{proposition}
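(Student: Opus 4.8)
The plan is to obtain the proposition as a direct application of Lemma~\ref{Lemma 4.3} with the stopping time $\tau=\tau_{L}$, so that the entire argument reduces to verifying the three hypotheses of that lemma. First I would observe that $\tau_{L}$ is a bounded $(\bar{\mathcal{B}}_{t})_{t\geq0}$-stopping time: the bound $\tau_{L}\leq L$ is immediate from \eqref{est 127a}--\eqref{est 127b} since each $\tau_{L}^{\lambda}\leq L$, and the stopping-time property was already recorded just after \eqref{est 312}. Next, taking $\xi^{\text{in}}=v^{\text{in}}\in\dot{H}_{\sigma}^{\frac{1}{2}}$ and $\theta^{\text{in}}=0$, the measure $P=\mathcal{L}(v,B)$ is, by Proposition~\ref{Proposition 4.4}, a probabilistically weak solution to \eqref{est 30} on $[0,\tau_{L}]$ in the sense of Definition~\ref{Definition 4.2}. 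What then remains is to produce a Borel set $\mathcal{N}\subset\bar{\Omega}_{\tau_{L}}$ with $P(\mathcal{N})=0$ such that the kernels $Q_{\omega}$ supplied by Lemma~\ref{Lemma 4.2} satisfy $Q_{\omega}(\{\omega'\in\bar{\Omega}:\tau_{L}(\omega')=\tau_{L}(\omega)\})=1$ for every $\omega\in\bar{\Omega}_{\tau_{L}}\setminus\mathcal{N}$; once this is available, Lemma~\ref{Lemma 4.3} yields directly that $P\otimes_{\tau_{L}}R$ of \eqref{est 309} is a probabilistically weak solution to \eqref{est 30} on $[0,\infty)$ satisfying Definition~\ref{Definition 4.1}, the uniform moment bound (M3) being inherited through the stability statement of Proposition~\ref{Proposition 4.1} that underlies Lemma~\ref{Lemma 4.3}.

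For the remaining verification I would follow \cite[proof of Proposition~5.5]{HZZ19}. The key observation is that $\tau_{L}$ is a functional of the noise component $\theta$ alone and, moreover, is left unchanged if $\theta$ is perturbed only after time $\tau_{L}$; since Lemma~\ref{Lemma 4.2} concentrates $Q_{\omega}$ on paths $\omega'$ with $(\xi,\theta)(t,\omega')=(\xi,\theta)(t,\omega)$ for all $t\in[0,\tau_{L}(\omega)]$, this forces $\tau_{L}(\omega')=\tau_{L}(\omega)$ for $Q_{\omega}$-a.e.\ $\omega'$. The purpose of writing $\tau_{L}$ in \eqref{est 127b} as the increasing limit of the times $\tau_{L}^{\lambda}$ in \eqref{est 127a} is precisely to make this rigorous: each $\tau_{L}^{\lambda}$ is a first-exit time from an open set, so an exit occurring along $\omega$ at a time $\tau_{L}^{\lambda}(\omega)\leq\tau_{L}(\omega)$ transfers verbatim to any $\omega'$ agreeing with $\omega$ on $[0,\tau_{L}(\omega)]$; distinguishing the cases $\tau_{L}(\omega)<L$ and $\tau_{L}(\omega)=L$ and using in the former case that $\lvert\theta(\tau_{L}(\omega),\omega)\rvert\geq L^{\frac{1}{4}}$ or $\lVert\theta(\omega)\rVert_{C_{\tau_{L}(\omega)}^{\frac{1}{2}-2\delta}}\geq L^{\frac{1}{2}}$, one gets $\tau_{L}^{\lambda}(\omega')=\tau_{L}^{\lambda}(\omega)$ for all $\lambda$ outside a single $P$-null set, and then passes to the limit $\lambda\to\infty$. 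The measurability of $\omega\mapsto Q_{\omega}(\cdot)$ needed here is exactly the $\bar{\mathcal{B}}_{\tau_{L}}$-measurability asserted in Lemma~\ref{Lemma 4.2}.

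I expect the main obstacle to be precisely this stopping-time stability, i.e.\ showing that $\tau_{L}$ is, up to a $P$-null set, insensitive to the restarted noise built into $Q_{\omega}$. The delicacy is that $\tau_{L}$ mixes a first-passage time with a H$\ddot{\mathrm{o}}$lder seminorm of the noise path, so one must handle carefully whether the infima defining the exit times are attained and the behavior exactly at the thresholds $L^{\frac{1}{4}}$ and $L^{\frac{1}{2}}$; this is the reason the definition is set up in two stages in \eqref{est 127a}--\eqref{est 127b} rather than as a single hitting time. Everything else needed for Lemma~\ref{Lemma 4.3} --- boundedness of $\tau_{L}$, the solution property on $[0,\tau_{L}]$ from Proposition~\ref{Proposition 4.4}, and the measurability of the kernels from Lemma~\ref{Lemma 4.2} --- is already in place, so modulo this single verification the proposition follows.
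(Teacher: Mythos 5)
Your proposal is correct and follows essentially the same route as the paper, which does not reprove this proposition but cites it directly from \cite[Proposition 5.5]{HZZ19}: the argument is exactly the application of Lemma \ref{Lemma 4.3} with $\tau=\tau_{L}$, using Proposition \ref{Proposition 4.4} for the solution property on $[0,\tau_{L}]$ and the two-stage definition \eqref{est 127a}--\eqref{est 127b} to verify $Q_{\omega}(\{\tau_{L}(\omega')=\tau_{L}(\omega)\})=1$ outside a $P$-null set. Your identification of the stopping-time stability as the only nontrivial verification, and of the role of the approximating times $\tau_{L}^{\lambda}$ in handling the thresholds, matches the intended argument.
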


\begin{proof}[Proof of Theorem \ref{Theorem 2.2} assuming Theorem \ref{Theorem 2.1}]
This proof is similar to previous works such as \cite{Y23a} and thus left in the Appendix for completeness. 
\end{proof} 

\section{Proof of Theorem \ref{Theorem 2.1}}\label{Section 5}
Throughout this Section \ref{Section 5}, for generality we consider the generalized SQG equations with $u = \Lambda^{2-\gamma_{2}} v$ with $\gamma_{2} \in [1, 2)$ from \eqref{est 30b} and \eqref{est 315a}.  

\subsection{Setups}\label{Section 5.1}
We define for $k \in \mathbb{S}^{1}$, 
\begin{equation}\label{est 192} 
\mathbb{P}_{q+1, k} \triangleq \mathbb{P} P_{\approx k \lambda_{q+1}}
\end{equation} 
where $P_{\approx k\lambda_{q+1}}$ is a Fourier operator with a Fourier symbol $\hat{K}_{\approx k \lambda_{q+1}}(\xi) = \hat{K}_{\approx 1} \left( \frac{ \xi}{\lambda_{q+1}} - k \right)$ and $\hat{K}_{\approx 1}$ is a smooth bump function such that $\supp \hat{K}_{\approx 1} \subset \{\xi:  \lvert \xi \rvert \leq \frac{1}{8} \}$ and $\hat{K}_{\approx 1} \rvert_{\{\xi:  \lvert \xi \rvert \leq \frac{1}{16} \}} \equiv 1$. Within the support of $\hat{K}_{\approx 1} \left( \frac{\cdot}{\lambda_{q+1}} - k \right)$, we have $\frac{7}{8} \lambda_{q+1} \leq \lvert \xi \rvert \leq \frac{9}{8} \lambda_{q+1}$ so that $\supp \widehat{ \mathbb{P}_{q+1, k} f} \subset \{ \xi: \frac{7}{8} \lambda_{q+1} \leq \lvert \xi \rvert \leq \frac{9}{8} \lambda_{q+1} \}$. It follows that for any $a, b \geq 0$ and some suitable constant $C_{a,b}$ that is independent of $\lambda_{q+1}$, 
\begin{align*}
\sup_{\xi \in\mathbb{R}^{2}} \lvert \xi \rvert^{a} \left\lvert  (-\Delta)^{\frac{b}{2}} \hat{K}_{\approx k \lambda_{q+1} } (\xi) \right\rvert \leq C_{a,b} \lambda_{q+1}^{a-b};
\end{align*}
moreover, we have 
\begin{align*}
\left\lVert \lvert x \rvert^{b} (-\Delta)^{\frac{a}{2}} K_{\approx k \lambda_{q+1}} \right\rVert_{L^{1}(\mathbb{R}^{2})} \leq C_{a,b} \lambda_{q+1}^{a-b} \hspace{3mm} \forall \hspace{1mm} a, b \in [0,2]. 
\end{align*}
Thus, for all $f$ that is $\mathbb{T}^{2}$-periodic, we can write $\mathbb{P}_{q+1, k} f(x) = K_{q+1, k} \ast f(x)$ for $K_{q+1, k}$ that satisfies for all $a, b \geq 0$, 
\begin{equation*}  
\left\lVert \lvert x \rvert^{b}(-\Delta)^{\frac{a}{2}} K_{q+1, k} (x) \right\rVert_{L^{1}(\mathbb{R}^{2})}  \leq C_{a,b} \lambda_{q+1}^{a-b}.
\end{equation*}
By Young's inequality for convolution, we obtain a universal constant $C_{1} \geq 0$ such that 
\begin{equation}\label{est 202}
\lVert \mathbb{P}_{q+1, k} \rVert_{C_{x} \mapsto C_{x}} \leq C_{1}. 
\end{equation}  
At last, we define $\tilde{P}_{\approx \lambda_{q+1}}$ to be the Fourier operator with a symbol supported in frequency $\{\xi: \frac{\lambda_{q+1}}{4} \leq \lvert \xi \rvert \leq 4 \lambda_{q+1} \}$ and is identically one on $\{\xi: \frac{3 \lambda_{q+1}}{8} \leq \lvert \xi \rvert \leq 3 \lambda_{q+1}\}$. 

Next, we recall $l = \lambda_{q+1}^{-\alpha}$ from \eqref{est 32} and split $[t_{q+1}, T_{L}]$ into finitely many subintervals of size $\tau_{q+1}$ defined by 
\begin{equation}\label{est 210}
\tau_{q+1}^{-1} \triangleq l^{-\frac{1}{2}} \lambda_{q+1}^{\frac{ 3- \gamma_{2}}{2}} \delta_{q+1}^{\frac{1}{4}}. 
\end{equation} 
We let $\chi: \mathbb{R} \mapsto [0,1]$ be a smooth cut-off function with support in $(-1, 1)$ such that $\chi \equiv 1$ on $(-\frac{1}{4}, \frac{1}{4})$, and additionally define 
\begin{equation}\label{est 209} 
\chi_{j}(t) \triangleq \chi(\tau_{q+1}^{-1} t - j) \hspace{3mm} \text{ for } j \in \{ \lfloor t_{q+1} \tau_{q+1}^{-1} \rfloor, \lfloor t_{q+1} \tau_{q+1}^{-1} \rfloor + 1, \hdots,  \lceil T_{L} \tau_{q+1}^{-1} \rceil\}, 
\end{equation}
that satisfies 
\begin{equation*}
\sum_{j= \lfloor t_{q+1} \tau_{q+1}^{-1} \rfloor}^{\lceil T_{L} \tau_{q+1}^{-1}  \rceil} \chi_{j}^{2}(t)= 1 \hspace{3mm} \forall \hspace{1mm} t \in [t_{q+1}, T_{L}]. 
\end{equation*} 
For convenience, we denote 
\begin{align*}
&\sum_{j} \triangleq \sum_{j= \lfloor t_{q+1} \tau_{q+1}^{-1} \rfloor}^{\lceil T_{L} \tau_{q+1}^{-1} \rceil }, \\
& \sum_{k} \triangleq \sum_{k\in\Gamma_{j}} \text{ for fixed } j \in \{\lfloor t_{q+1} \tau_{q+1}^{-1} \rfloor, \lfloor t_{q+1} \tau_{q+1}^{-1} \rfloor + 1,  \hdots, \lceil T_{L} \tau_{q+1}^{-1} \rceil \}, \\
& \sum_{j,k} \triangleq \sum_{j= \lfloor t_{q+1} \tau_{q+1}^{-1} \rfloor}^{\lceil T_{L}  \tau_{q+1}^{-1} \rceil} \sum_{k\in\Gamma_{j}},  
\end{align*}
where we define 
\begin{align*}
\Gamma_{j} \triangleq 
\begin{cases}
\Gamma_{1} & \text{ when } j \text{ is odd}, \\
\Gamma_{2} & \text{ when } j \text{ is even},
\end{cases}
\end{align*}
with $\Gamma_{1}$ and $\Gamma_{2}$ from Lemma \ref{Geometric Lemma}.

The range of following parameters are crucial in our estimates. First, we consider 
\begin{equation}\label{est 315a}
\gamma_{2} \in [1, 2) 
\end{equation} 
as fixed. Then we fix 
\begin{equation}\label{est 315d}  
 \gamma_{1} \in \left(0, 2 - \frac{\gamma_{2}}{2}\right). 
\end{equation} 
For 
\begin{equation}\label{est 443}
L \geq \frac{44}{10 - 5 \gamma_{2}}, 
\end{equation} 
we choose $b \in \mathbb{N}$ such that 
\begin{equation}\label{est 446}
b >  \frac{11L^{2}}{2-\gamma_{2}} + 5. 
\end{equation} 
This choice of $b$ is made after determining all of its necessary lower bounds in \eqref{est 464}, \eqref{est 465}, \eqref{est 466}, \eqref{est 467}, \eqref{est 450}, \eqref{est 468}, \eqref{est 469}, \eqref{est 472}, \eqref{est 473}, \eqref{est 474},  \eqref{est 476}, \eqref{est 470}, and \eqref{est 471}. Furthermore, we consider $\beta$ in the range of 
\begin{equation}\label{est 315b}
1 - \frac{\gamma_{2}}{2} < \beta <  \frac{12 - 6 \gamma_{2}}{11},  
\end{equation} 
to be taken close to the lower bound as needed; finally, we choose 
\begin{equation}\label{est 315c}
\alpha \triangleq 1 + \frac{\beta}{2}. 
\end{equation} 
Next, we define 
\begin{subequations}\label{est 326}
\begin{align}
&M_{0} \in C^{\infty}(\mathbb{R}) \text{ such that } M_{0}(t) 
= 
\begin{cases}
e^{2L} & \text{ if } t \leq 0, \\
e^{4Lt + 2L} & \text{ if } t \geq T \wedge L, 
\end{cases}
\hspace{5mm} 0 \leq M_{0}'(t) \leq 8L M_{0}(t), \\
&m_{L} \triangleq \sqrt{3} L^{\frac{1}{4}} e^{\frac{1}{2} L^{\frac{1}{4}}}. 
\end{align}
\end{subequations}
We denote specific universal non-negative constants $C_{G}$ and $C_{S}$ respectively due to the Gagliardo-Nirenberg and the Sobolev's inequalities: for $f$ that is sufficiently smooth in $x \in \mathbb{T}^{2}$ and mean-zero,  
\begin{subequations}\label{est 155}
\begin{align}
&\lVert f \rVert_{C_{x}^{2-\gamma_{2}}} + \lVert \Lambda^{2-\gamma_{2}} f \rVert_{L_{x}^{\infty}} \leq C_{G} \lVert f\rVert_{L_{x}^{2}}^{\frac{\gamma_{2}}{3}} \lVert \Lambda^{3} f \rVert_{L_{x}^{2}}^{1- \frac{\gamma_{2}}{3}}, \label{est 155a} \\
&\lVert f \rVert_{L_{x}^{\infty}} \leq C_{S} \lVert f \rVert_{\dot{H}_{x}^{3-\gamma_{1}}}. \label{est 155b}
\end{align}
\end{subequations} 
Due to \eqref{est 312} and the fact that we extended $B(t)$ to $[-2, 0]$ by $B(t) = B(0)$ for all $t \in [-2, 0]$, we see that for all $\delta \in (0, \frac{1}{4})$ and $t \in [-2, T_{L}] = [t_{0}, T_{L}]$, 
\begin{equation}\label{est 324} 
\lvert B(t) \rvert \leq L^{\frac{1}{4}} \hspace{1mm} \text{ and } \hspace{1mm} \lVert B \rVert_{C_{t,0}^{\frac{1}{2} - 2 \delta}} \leq L^{\frac{1}{2}}
\end{equation} 
which implies that for $L$ that satisfies \eqref{est 443}, we have 
\begin{equation}\label{est 332}
\lVert \Upsilon \rVert_{C_{t,0}},  \lVert \Upsilon^{-1} \rVert_{C_{t,0}}, \leq e^{L^{\frac{1}{4}}}, \hspace{5mm}  \lVert \Upsilon_{l}^{-\frac{1}{2}} \rVert_{C_{t,0}} \leq e^{\frac{1}{2}L^{\frac{1}{4}}}, \hspace{5mm} \lVert \Upsilon \rVert_{C_{t,0}^{\frac{1}{2} - 2\delta}}, \lVert \Upsilon^{-1} \rVert_{C_{t,0}^{\frac{1}{2} - 2\delta}} \leq m_{L}^{2}.
\end{equation} 
For $C_{1}$ from \eqref{est 202}, we fix a universal constant 
\begin{equation}\label{est 481} 
C_{0} \geq \max\left\{\frac{\pi}{2},  2^{4} \sup_{k \in \Gamma_{1} \cup \Gamma_{2}} \lVert \gamma_{k} \rVert_{C(B(\Id, \epsilon_{\gamma}))}C_{1}\right\}
\end{equation} 
that is sufficiently large to satisfy the last inequalities of \eqref{est 482} and \eqref{est 483}. For another universal constant $\epsilon_{\gamma} > 0$ from Lemma \ref{Geometric Lemma}, we consider the following inductive hypothesis. 
\begin{hypothesis}\label{Hypothesis 5.1}
\noindent For all $t \in [t_{q}, T_{L}]$, 
\begin{enumerate}[(a)] 
\item\label{Hypothesis 5.1 (a)} $\supp \hat{y}_{q} \subset B(0, 2 \lambda_{q})$, 
\begin{equation}\label{est 316}
\lVert y_{q} \rVert_{C_{t,x,q}} \leq C_{0} \left(1+ \sum_{1 \leq j \leq q} \delta_{j}^{\frac{1}{2}} \right) m_{L} M_{0}(t)^{\frac{1}{2}}, 
\end{equation} 
\begin{equation}\label{est 317} 
\lVert y_{q} \rVert_{C_{t,q} C_{x}^{2-\gamma_{2}}} + \lVert \Lambda^{2-\gamma_{2}} y_{q} \rVert_{C_{t,x,q}} \leq C_{0} m_{L} M_{0}(t)^{\frac{1}{2}} \lambda_{q}^{2-\gamma_{2}} \delta_{q}^{\frac{1}{2}}, 
\end{equation} 
\item\label{Hypothesis 5.1 (b)}  $\supp \hat{\mathring{R}}_{q} \subset B(0, 4 \lambda_{q})$, 
\begin{equation}\label{est 318}
\lVert \mathring{R}_{q} \rVert_{C_{t,x,q}} \leq \epsilon_{\gamma} M_{0}(t) \lambda_{q+1}^{2-\gamma_{2}} \delta_{q+1}, 
\end{equation} 
\item\label{Hypothesis 5.1 (c)}  
\begin{equation}\label{est 319}
\lVert (\partial_{t} + \Upsilon_{l} \Lambda^{2-\gamma_{2}} y_{l} \cdot \nabla) y_{q} \rVert_{C_{t,x,q}} \leq C_{0} L^{\frac{1}{2}} e^{2L^{\frac{1}{4}}} M_{0}(t) \lambda_{q}^{3-\gamma_{2}} \delta_{q}.
\end{equation} 
\end{enumerate}  
\end{hypothesis}

\begin{proposition}\label{Proposition 5.1} 
Suppose that $\gamma_{2}, \gamma_{1}, \beta$, $\alpha$, and $b$ respectively satisfy \eqref{est 315a}, \eqref{est 315d}, \eqref{est 315b}, \eqref{est 315c}, and \eqref{est 446}. Let $L$ satisfy  \eqref{est 443} and define 
\begin{equation}\label{est 321}
y_{0} (t,x) \triangleq \frac{m_{L} M_{0}(t)^{\frac{1}{2}}}{2\pi} 
\begin{pmatrix}
\sin(x_{2}) \\
0
\end{pmatrix}.
\end{equation} 
Then, together with 
\begin{align}
\mathring{R}_{0}(t,x) \triangleq \frac{ m_{L}(\frac{d}{dt} + \frac{1}{2}) M_{0}(t)^{\frac{1}{2}} }{2\pi} 
\begin{pmatrix}
0 & - \cos(x_{2}) \\
-\cos(x_{2}) & 0 
\end{pmatrix}  + \mathcal{B}( \Lambda^{\gamma_{1}} y_{0})(t,x),  \label{est 322}
\end{align} 
the pair $(y_{0}, \mathring{R}_{0})$ solves \eqref{est 320} and satisfies the Hypothesis \ref{Hypothesis 5.1} at level $ q = 0$ on $[t_{0}, T_{L}]$ provided $a \in 5 \mathbb{N}$ is taken sufficiently large and that 
\begin{equation}\label{est 153b} 
\max \left\{2, \frac{2^{\frac{5}{2} - \frac{\gamma_{2}}{2}} C_{0}}{\pi} \right\} <  a^{b(-1 +  \frac{\gamma_{2}}{2} + \beta)} \leq \frac{ \sqrt{\epsilon_{\gamma}} e^{\frac{L}{2}}}{ \sqrt{m_{L}} \sqrt{\left(4L + \frac{1}{2} \right)\frac{1}{\pi} + C_{S} 2^{-\frac{1}{2}}} }
\end{equation}  
where the first inequality of \eqref{est 153b} guarantees \eqref{est 442}. Finally, $y_{0}(t,x)$ and $\mathring{R}_{0}(t,x)$ are both deterministic over $[t_{0}, 0]$. 
\end{proposition}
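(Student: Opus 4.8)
The plan is to verify directly that the explicitly constructed pair $(y_0, \mathring{R}_0)$ from \eqref{est 321}--\eqref{est 322} satisfies the system \eqref{est 320} at level $q=0$ and each of the three parts of Hypothesis \ref{Hypothesis 5.1}, then to record the parameter inequality \eqref{est 153b} that makes all the required bounds hold for large $a$. First I would compute the nonlinear terms of \eqref{est 320a} for the shear profile $y_0(t,x) = c m_L M_0(t)^{1/2}(\sin x_2, 0)$ with $c = 1/(2\pi)$. Since $y_0$ depends on $x_2$ only through its first component, $\Lambda^{2-\gamma_2} y_0 \cdot \nabla y_0 = 0$, so the only surviving nonlinear contribution is $-(\nabla y_0)^T \cdot \Lambda^{2-\gamma_2} y_0$. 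The key observation advertised in the introduction is that for a shear flow this term is a gradient: writing it out, $(\nabla y_0)^T \cdot \Lambda^{2-\gamma_2} y_0$ has components built from $\partial_i (y_0)_1 \cdot (\Lambda^{2-\gamma_2} y_0)_1$, and because $(\Lambda^{2-\gamma_2} y_0)_1 = c m_L M_0^{1/2} \sin x_2$ is proportional to $(y_0)_1$, this is $\nabla(\tfrac12 |(y_0)_1|^2 \cdot(\text{symbol factor}))$ up to a scalar, which gets absorbed into $\nabla p_0$. Hence the only genuine obstructions left in \eqref{est 320a} are $\partial_t y_0 + \tfrac12 y_0$ and $\Lambda^{\gamma_1} y_0$; the first is handled by the explicit diagonal-free matrix in \eqref{est 322} (note $\partial_{x_2}(-\cos x_2) = \sin x_2$, so $\divergence$ of that matrix reproduces $c m_L (\tfrac{d}{dt}+\tfrac12)M_0^{1/2}(\sin x_2, 0)$), and the second by the $\mathcal{B}(\Lambda^{\gamma_1} y_0)$ term, using that $\Lambda^{\gamma_1} y_0$ has zero spatial mean so $\mathcal{B}$ from Lemma \ref{Inverse-divergence lemma} applies. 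I would also check $\nabla \cdot y_0 = \partial_{x_1}(\sin x_2) = 0$ and that $\mathring{R}_0$ is symmetric and trace-free: the matrix in \eqref{est 322} manifestly is, and $\mathcal{B}$ is valued in symmetric trace-free matrices by construction.

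Next I would verify Hypothesis \ref{Hypothesis 5.1}\ref{Hypothesis 5.1 (a)}. The Fourier support of $y_0$ is contained in $\{k : |k| = 1\} \subset B(0, 2\lambda_0)$ since $\lambda_0 = a \geq 5$; the norms: $\|y_0\|_{C_{t,x,0}} \leq \tfrac{1}{2\pi} m_L M_0(t)^{1/2} \leq C_0 m_L M_0(t)^{1/2}$ since $C_0 \geq \pi/2 > 1/(2\pi)$, which gives \eqref{est 316} at $q=0$ (with the sum empty); similarly $\|y_0\|_{C_{t,0}C_x^{2-\gamma_2}} + \|\Lambda^{2-\gamma_2} y_0\|_{C_{t,x,0}} \lesssim \tfrac{1}{2\pi} m_L M_0^{1/2}$, and we need this $\leq C_0 m_L M_0^{1/2} \lambda_0^{2-\gamma_2}\delta_0^{1/2} = C_0 m_L M_0^{1/2} a^{(2-\gamma_2) - \beta}$, which holds for $a$ large since $2-\gamma_2 > \beta$ by \eqref{est 315b} (indeed $\beta < (12-6\gamma_2)/11 < 2-\gamma_2$). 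For part \ref{Hypothesis 5.1 (c)}: at $q=0$, $y_l$ is a mollification of $y_0$, and $\Upsilon_l \Lambda^{2-\gamma_2} y_l \cdot \nabla y_0$ involves $\partial_{x_1} y_0 = 0$ in the only nontrivial slot, so the transport term essentially vanishes and one is left with $\partial_t y_0 + \tfrac12 y_0 - \tfrac12 y_0$ type bounds; more carefully $\|(\partial_t + \Upsilon_l \Lambda^{2-\gamma_2} y_l \cdot \nabla) y_0\|_{C_{t,x,0}} \lesssim \|\partial_t y_0\|_{C_{t,x,0}} \lesssim L m_L M_0(t)^{1/2}$ using $|M_0'| \leq 8L M_0$, and we need this $\leq C_0 L^{1/2} e^{2L^{1/4}} M_0(t)\lambda_0^{3-\gamma_2}\delta_0$, which again holds for $a$ large since $3-\gamma_2 - 2\beta > 0$ and $M_0(t)^{1/2} \leq M_0(t)$ when $M_0 \geq 1$.

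The genuine balancing act, which I expect to be the main obstacle, is part \ref{Hypothesis 5.1 (b)}: showing $\|\mathring{R}_0\|_{C_{t,x,0}} \leq \epsilon_\gamma M_0(t)\lambda_1^{2-\gamma_2}\delta_1$. The two pieces of $\mathring{R}_0$ are bounded by $\tfrac{1}{2\pi} m_L |(\tfrac{d}{dt}+\tfrac12)M_0^{1/2}|$ and $\|\mathcal{B}(\Lambda^{\gamma_1} y_0)\|_{C_{t,x,0}}$. For the first, $(\tfrac{d}{dt}+\tfrac12)M_0^{1/2} = \tfrac12 M_0^{-1/2} M_0' + \tfrac12 M_0^{1/2} \leq (4L + \tfrac12) M_0^{1/2}$. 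For the second, since $\Lambda^{\gamma_1} y_0$ is a single Fourier mode at $|k|=1$, $\mathcal{B}$ is bounded and $\|\mathcal{B}(\Lambda^{\gamma_1} y_0)\|_{C_{t,x,0}} \lesssim C_S m_L M_0^{1/2}$ (this is where the $C_S 2^{-1/2}$ in \eqref{est 153b} comes from). So altogether $\|\mathring{R}_0\|_{C_{t,x,0}} \leq m_L M_0(t)^{1/2}\big[(4L+\tfrac12)\tfrac{1}{\pi} + C_S 2^{-1/2}\big]$, and since $M_0(t)^{1/2} \leq e^{-L/2} M_0(t)$ for $t \geq t_0$ (because $M_0(t) \geq e^{2L}$ everywhere by \eqref{est 326}), this is bounded by $m_L e^{-L/2} M_0(t)\big[(4L+\tfrac12)\tfrac{1}{\pi} + C_S 2^{-1/2}\big]$. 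Requiring this $\leq \epsilon_\gamma M_0(t)\lambda_1^{2-\gamma_2}\delta_1 = \epsilon_\gamma M_0(t) a^{b(2-\gamma_2-2\beta)} = \epsilon_\gamma M_0(t) a^{-b(-1+\gamma_2/2+\beta) \cdot 2}$... rather, $\lambda_1^{2-\gamma_2}\delta_1 = a^{b(2-\gamma_2)}a^{-2b\beta} = a^{b(2-\gamma_2-2\beta)}$, and since $2-\gamma_2-2\beta = 2(-1+\gamma_2/2+\beta)\cdot(-1) $... let me just say we need $a^{b(-1+\gamma_2/2+\beta)} \leq \sqrt{\epsilon_\gamma}\, e^{L/2} / \big(\sqrt{m_L}\sqrt{(4L+\tfrac12)\tfrac1\pi + C_S 2^{-1/2}}\big)$, which is precisely the right inequality in \eqref{est 153b}; this is consistent because $-1+\gamma_2/2+\beta > 0$ so the exponent is positive and the bound squeezes $a^{b(-1+\gamma_2/2+\beta)}$ between a large lower bound (from the left inequality, needed for \eqref{est 316}-\eqref{est 319} and to ensure $t_q < -1$ via \eqref{est 442}) and this fixed upper bound. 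Verifying the left inequality of \eqref{est 153b} is compatible — i.e. that $\max\{2, 2^{5/2-\gamma_2/2}C_0/\pi\}$ is indeed smaller than the right-hand side for the chosen $L$ and $b$ satisfying \eqref{est 443}, \eqref{est 446} — is the final check; here one uses that $L \geq 44/(10-5\gamma_2)$ makes $e^{L/2}$ large enough relative to $\sqrt{m_L} = 3^{1/4}L^{1/8}e^{L^{1/4}/4}$ (which grows only sub-exponentially in $L$) and the polynomial-in-$L$ factor, so the upper bound is $\gtrsim e^{L/2 - o(L)}$, comfortably exceeding the fixed left-hand constant, while $b$ being large keeps $a^{b(-1+\gamma_2/2+\beta)}$ from overshooting once $a$ is chosen appropriately. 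The deterministic claim on $[t_0, 0]$ is immediate: $M_0$ is a fixed smooth function and $B$ (hence $\Upsilon$, though $\Upsilon$ does not enter $y_0$ or $\mathring{R}_0$) was extended to be constant on $[-2,0]$, so $y_0$ and $\mathring{R}_0$ carry no randomness there.
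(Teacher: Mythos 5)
Your proposal is correct and follows essentially the same route as the paper's proof: the gradient structure of $(\nabla y_0)^T\cdot\Lambda^{2-\gamma_2}y_0$ absorbed into $p_0$, the explicit antidiagonal matrix handling $(\partial_t+\tfrac12)y_0$, the term $\mathcal{B}(\Lambda^{\gamma_1}y_0)$ handling the dissipation, and the right-hand inequality of \eqref{est 153b} combined with $M_0(t)^{1/2}\ge e^{L}$ to close the Reynolds-stress bound \eqref{est 318} at $q=0$. Your observation that the transport term in Hypothesis \ref{Hypothesis 5.1}\ref{Hypothesis 5.1 (c)} vanishes exactly (since $y_l$ remains a shear in $x_2$ and $\partial_{x_1}y_0=0$) is slightly sharper than the paper's crude bound $e^{L^{1/4}}\lVert\Lambda^{2-\gamma_2}y_0\rVert\lVert\nabla y_0\rVert$, but both yield the same conclusion.
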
 

\begin{proof}[Proof of Proposition \ref{Proposition 5.1}] 
We observe that $y_{0}$ is divergence-free, mean-zero while $\mathring{R}_{0}$ is symmetric and trace-free due to Lemma \ref{Inverse-divergence lemma}. Moreover, we recall from \cite[Equation (95)]{Y23a} that 
\begin{subequations}\label{est 171}
\begin{align}
& \int_{\mathbb{T}^{2}} \lvert \sin(x_{2}) \rvert^{2} dx = \int_{\mathbb{T}^{2}} \lvert \cos(x_{2}) \rvert^{2}dx = 2\pi^{2}, \label{est 171a}\\
& \supp \widehat{\sin(x_{2})} \subset \{(0,1), (0, -1)\}, \hspace{2mm} \widehat{\sin(x_{2}))}((0,1)) = -i2\pi^{2}, \hspace{2mm} \widehat{ \sin(x_{2})}((0, -1)) = i2\pi^{2}, \label{est 171b}\\
& \supp \widehat{\cos(x_{2})} \subset \{(0,1), (0,-1) \}, \hspace{2mm}\widehat{\cos(x_{2})}((0,1)) = \widehat{\cos(x_{2})}((0,-1)) = 2 \pi^{2}. \label{est 171c}
\end{align}
\end{subequations} 
Thus, because $(\Lambda^{2- \gamma_{2}} y_{0} \cdot \nabla) y_{0} = 0$ and 
\begin{align}\label{est 477}
\Upsilon (\nabla y_{0})^{T}\cdot \Lambda^{2-\gamma_{2}} y_{0} = \Upsilon \frac{m_{L}^{2} M_{0}(t)}{(2\pi)^{2}} 
\begin{pmatrix}
0\\
\cos(x_{2}) \Lambda^{2-\gamma_{2}} \sin(x_{2})
\end{pmatrix} \overset{\eqref{est 171}}{=} \Upsilon \frac{ m_{L}^{2} M_{0}(t)}{2 (2\pi)^{2}} \nabla \sin(x_{2})^{2}, 
\end{align}
we see that $(y_{0}, \mathring{R}_{0})$ solves \eqref{est 320} with 
\begin{align}\label{est 444}
p_{0} \triangleq \frac{ \Upsilon m_{L}^{2} M_{0}(t) \sin(x_{2})^{2}}{2 (2\pi)^{2}}. 
\end{align}
Moreover, due to \eqref{est 171} we see that $\supp \hat{y}_{0} \subset B(0, 2\lambda_{0})$ if we take $\lambda_{0} \overset{\eqref{est 35}}{=} a > \frac{1}{2}$. This also implies directly from \eqref{est 322} that $\supp \hat{\mathring{R}}_{0} \subset B(0, 2\lambda_{0})  \subset B(0, 4\lambda_{0})$.  Next, because $C_{0} \geq \frac{1}{2\pi}$ due to \eqref{est 481}, it follows immediately that $\lVert y_{0} \rVert_{C_{t,x,0}} \leq \frac{m_{L} M_{0}(t)^{\frac{1}{2}}}{2\pi} \leq C_{0} m_{L} M_{0}(t)^{\frac{1}{2}}$. Additionally, by \eqref{est 315b} that guarantees $\beta < 2 -\gamma_{2}$ and \eqref{est 171}, we can take $a \in 5 \mathbb{N}$ sufficiently large to deduce 
\begin{equation*}
 \lVert y_{0} \rVert_{C_{t,0}C_{x}^{2- \gamma_{2}}}  +\lVert \Lambda^{2- \gamma_{2}} y_{0} \rVert_{C_{t,x,0}} \overset{\eqref{est 321} \eqref{est 155a}}{\leq} C_{G} 2^{-\frac{1}{2}} m_{L}  M_{0}(t)^{\frac{1}{2}}  \leq C_{0}  m_{L} M_{0}(t)^{\frac{1}{2}} \lambda_{0}^{2-\gamma_{2}} \delta_{0}^{\frac{1}{2}}.
\end{equation*} 
To compute $\lVert \mathring{R}_{0} \rVert_{C_{t,x}}$, we rely on \eqref{est 321}, \eqref{est 155b}, and \eqref{est 171} to estimate for all $t \in [t_{0}, T_{L}]$,
\begin{equation}\label{est 323}
 \lVert \mathcal{B} \Lambda^{\gamma_{1}} y_{0}(t) \rVert_{C_{x}} \leq m_{L} M_{0}(t)^{\frac{1}{2}}C_{S} 2^{-\frac{1}{2}}. 
\end{equation} 
Therefore, we can estimate from \eqref{est 322}  
\begin{align*}
 \lVert \mathring{R}_{0} \rVert_{C_{t,x,0}} \overset{\eqref{est 323} \eqref{est 326}}{\leq} m_{L} \left[\left(4L + \frac{1}{2}\right) \frac{2}{\pi} + C_{S} 2^{-\frac{1}{2}}\right] M_{0}(t)^{\frac{1}{2}}   \overset{\eqref{est 153b} \eqref{est 35} \eqref{est 326}}{\leq}  \epsilon_{\gamma}M_{0}(t) \lambda_{1}^{2- \gamma_{2}}\delta_{1}.
\end{align*}
Finally, using the fact that $\beta < \frac{3-\gamma_{2}}{2}$ due to \eqref{est 315b}, for $a \in 5 \mathbb{N}$ sufficiently large, we estimate 
\begin{align*}
& \lVert ( \partial_{t} + \Upsilon_{\lambda_{1}^{-\alpha}}\Lambda^{2- \gamma_{2}} y_{0}\ast_{x} \phi_{\lambda_{1}^{-\alpha}} \ast_{t} \varphi_{\lambda_{1}^{-\alpha}} \cdot \nabla) y_{0} \rVert_{C_{t,x,0}}  \nonumber \\
\overset{\eqref{est 332}}{\leq}&  \lVert \partial_{t} y_{0} \rVert_{C_{t,x}} + e^{L^{\frac{1}{4}}} \lVert \Lambda^{2- \gamma_{2}} y_{0} \rVert_{C_{t,x,0}} \lVert \nabla y_{0} \rVert_{C_{t,x,0}}  \nonumber \\
\overset{\eqref{est 155a} \eqref{est 321}}{\leq}&  C_{0} M_{0}(t) 3L^{\frac{1}{2}} e^{2L^{\frac{1}{4}}} \left[ \frac{2}{C_{0} \pi} + \frac{ C_{G}  }{2 \sqrt{2} \pi C_{0}} \right] \leq  C_{0} L^{\frac{1}{2}}  e^{2L^{\frac{1}{4}}} M_{0}(t) \lambda_{0}^{3- \gamma_{2}} \delta_{0}. 
\end{align*} 
At last, it is clear that $y_{0}(t,x)$ and consequently $\mathring{R}_{0}(t,x)$ are both deterministic over $[t_{0}, 0]$.  
\end{proof} 

\begin{remark}
We saw in \eqref{est 477}-\eqref{est 444} that the shear flow of the form $f(x_{2}) = \begin{pmatrix}
\sin(x_{2})\\
0
\end{pmatrix}$ satisfies 
\begin{align*}
(\nabla f)^{T} \cdot \Lambda^{2- \gamma_{2}} f = \nabla \begin{pmatrix}
0\\
\frac{1}{2}\sin(x_{2})^{2}
\end{pmatrix}.
\end{align*}
This was clear to see  due to \eqref{est 171}. Nevertheless, this is true for general functions $f = f(x_{2})$, $(\nabla f)^{T} \cdot \Lambda^{2-\gamma_{2}} f = 
\begin{pmatrix}
0\\
f'(x_{2}) \Lambda^{2-\gamma_{2}} f(x_{2})
\end{pmatrix}= \nabla 
\begin{pmatrix}
0\\
g(x_{2})
\end{pmatrix}$ for $g(x_{2})$ that is an anti-derivative of $f'(x_{2}) \Lambda^{2-\gamma_{2}} f(x_{2})$. 
\end{remark}

Before we state the following key proposition, we recall that $y_{q+1}= y_{l} + w_{q+1}$ from \eqref{est 164}.  
\begin{proposition}\label{Proposition 5.2}
Let $L$ that satisfies \eqref{est 443} be larger if necessary to satisfy 
\begin{equation}\label{est 445}
\max \left\{2, \frac{2^{\frac{5}{2} - \frac{\gamma_{2}}{2}} C_{0}}{\pi} \right\} <   \frac{ \sqrt{\epsilon_{\gamma}} e^{\frac{L}{2}}}{ \sqrt{m_{L}} \sqrt{\left(4L + \frac{1}{2} \right)\frac{1}{\pi} + C_{S} 2^{-\frac{1}{2}}} }.
\end{equation} 
Let $\gamma_{2}, \gamma_{1}$, $\alpha$, and $b$ respectively satisfy \eqref{est 315a},\eqref{est 315d}, \eqref{est 315c}, and \eqref{est 446}. Suppose that $(y_{q}, \mathring{R}_{q})$  is a $(\mathcal{F}_{t})_{t\geq 0}$-adapted solution to \eqref{est 320} that satisfies Hypothesis \ref{Hypothesis 5.1} over $[t_{q}, T_{L}]$. Then there exist $a \in 5 \mathbb{N}$ sufficiently large and $\beta > 1 - \frac{\gamma_{2}}{2}$ in \eqref{est 315b} sufficiently close to $1- \frac{\gamma_{2}}{2}$ that satisfies \eqref{est 153b} and $(\mathcal{F}_{t})_{t\geq 0}$-adapted processes $(y_{q+1}, \mathring{R}_{q+1})$ that solves \eqref{est 320}, satisfies the Hypothesis \ref{Hypothesis 5.1} at level $q+1$, and for all $t \in [t_{q+1}, T_{L}]$
\begin{equation}\label{est 327}  
\lVert y_{q+1}(t) - y_{q}(t) \rVert_{C_{x}} \leq C_{0} e^{\frac{1}{2} L^{\frac{1}{4}}} M_{0}(t)^{\frac{1}{2}} \delta_{q+1}^{\frac{1}{2}}.
\end{equation} 
Moreover, $w_{q+1} = y_{q+1} - y_{l}$ from \eqref{est 164} satisfies 
\begin{equation}\label{est 163} 
\supp\hat{w}_{q+1} \subset \left\{ \xi: \frac{\lambda_{q+1}}{2} \leq \lvert \xi \rvert \leq 2 \lambda_{q+1} \right\}.
\end{equation} 
Finally, if $y_{q}(t,x)$ and $\mathring{R}_{q}(t,x)$ are deterministic over $[t_{q}, 0]$, then so are $y_{q+1}(t,x)$ and $\mathring{R}_{q+1}(t, x)$ over $[t_{q+1}, 0]$. 
\end{proposition}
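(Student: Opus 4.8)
The plan is to carry out one step of a Nash-type convex integration iteration for the random system \eqref{est 320}, combining the deterministic scheme of \cite{BSV19} with the stochastic adaptations of \cite{Y23a} and the two new devices of Section 2.2. First I would mollify $y_q$, $\mathring{R}_q$ and $\Upsilon$ in space and, one-sidedly, in time at scale $l=\lambda_{q+1}^{-\alpha}$ as in \eqref{est 337}, obtaining the mollified equation \eqref{est 341} with the commutator term $R_{\text{Com1}}$ of \eqref{est 338}; the one-sided time mollifier $\varphi_l$ preserves $(\mathcal{F}_t)_{t\ge 0}$-adaptedness. Using Hypothesis \ref{Hypothesis 5.1} and writing the difference inside $R_{\text{Com1}}$ as a commutator via the identity \eqref{est 38}, I would derive the standard mollification estimates: the bounds \eqref{est 316}--\eqref{est 317} and \eqref{est 318} with $y_q,\mathring{R}_q$ replaced by $y_l,\mathring{R}_l$ up to harmless constants, a smallness bound on $R_{\text{Com1}}$ exploiting the $\lambda_{q+1}^{-1}$ gain from $\mathcal{B}$, and the bounds $\|\Upsilon_l^{\pm 1}\|_{C_{t,q+1}},\|\Upsilon_l^{-1/2}\|_{C_{t,q+1}}$ inheriting \eqref{est 332} together with $\|\Upsilon-\Upsilon_l\|_{C_{t,q+1}}\lesssim l^{1/2-2\delta}m_L^{2}$ from \eqref{est 324}.

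Next I would construct the perturbation $w_{q+1}$ from the Geometric Lemma \ref{Geometric Lemma}. On each time cell $\supp\chi_j$ (with $\chi_j$ as in \eqref{est 209}) I set $\mathring{R}_{q,j}=\mathring{R}_l(\tau_{q+1}j,\cdot)$ (the modification of Remark \ref{Remark 2.1}(2), which removes the need for any material-derivative hypothesis on the Reynolds stress), take the amplitude $a_{k,j}$ of \eqref{est 436} with $D(j,q)=\sqrt{\delta_{q+1}/(\gamma_{2}\lambda_{q+1}^{2-\gamma_{2}})}$ so that, after the Geometric Lemma and \eqref{est 265}, the $\chi_j^2$-sum of the new oscillation term cancels $\sum_j\chi_j^2\mathring{R}_{q,j}$ exactly, and incorporate the multiplicative correction $\bar{a}_{k,j}=\Upsilon_l^{-1/2}a_{k,j}$ of \eqref{est 454}. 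The principal perturbation is then $\sum_{j,k}\chi_j\bar{a}_{k,j}\,\mathbb{P}_{q+1,k}(k^{\bot}e^{i\lambda_{q+1}k\cdot x})$ plus a divergence-free corrector making $w_{q+1}$ divergence-free and mean-zero; the projector $\mathbb{P}_{q+1,k}$ of \eqref{est 192} forces \eqref{est 163}, and since $\supp\hat{y}_l\subset B(0,2\lambda_q)\subset B(0,2\lambda_{q+1})$ this also gives Hypothesis \ref{Hypothesis 5.1}\ref{Hypothesis 5.1 (a)} at level $q+1$. Using \eqref{est 202}, \eqref{est 332}, the Geometric Lemma bounds on $\gamma_k$, and $\delta_{q+1}^{1/2}$ in place of ``$\rho_j^{1/2}$'' (Remark \ref{Remark 2.1}(1)), I would verify \eqref{est 327} via $\|y_{q+1}-y_q\|_{C_x}\le\|w_{q+1}\|_{C_x}+\|y_l-y_q\|_{C_x}$, the bounds \eqref{est 316}--\eqref{est 317} at level $q+1$, and the material-derivative bound \eqref{est 319} at level $q+1$ from a transport estimate along $D_{t,q}$ of \eqref{est 349}, where the factor $\Upsilon_l$ is indispensable for linear multiplicative noise (Remark \ref{Remark 2.1}(3)) and $\Lambda^{2-\gamma_2}$ absorbs the extra derivative in the nonlinearity.

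Substituting $y_{q+1}=y_l+w_{q+1}$ into \eqref{est 320} I would collect the residual error into the linear error, the oscillation error $R_O$, the transport error, the corrector error, $R_{\text{Com1}}$, and the stochastic commutator $R_{\text{Com2}}$ of \eqref{est 462}, and define $\mathring{R}_{q+1}$ as the sum of their anti-divergences (via $\mathcal{B}$ from Lemma \ref{Inverse-divergence lemma}, modulo pressure corrections absorbed into $p_{q+1}$), the frequency localizations yielding $\supp\hat{\mathring{R}}_{q+1}\subset B(0,4\lambda_{q+1})$. The routine terms are estimated to close \eqref{est 318} at level $q+1$ with $\tau_{q+1}$ as in \eqref{est 210} and $\alpha=1+\beta/2$. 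For $R_{\text{Com2}}$ I would use the rewriting \eqref{est 389}--\eqref{est 391}: $R_{\text{Com2,1}}=-(\Upsilon-\Upsilon_l)\Upsilon_l^{-1}\mathring{R}_l$ is controlled by the $l^{1/2-2\delta}$ smallness of $\Upsilon-\Upsilon_l$ against the inductive bound on $\mathring{R}_l$, even though the ``previous error'' $\mathring{R}_l$ is not cancelled; $R_{\text{Com2,2}}=(\Upsilon-\Upsilon_l)\Upsilon_l^{-1}R_O$ reuses the oscillation-error estimate of \eqref{est 307} a second time (in \eqref{est 394}); and $R_{\text{Com2,3}}$ gains $\lambda_{q+1}^{-1}$ from $\mathcal{B}$ after inserting $\tilde{P}_{\approx\lambda_{q+1}}$ into the mixed products of $w_{q+1}$ and $y_l$, as their Fourier supports allow. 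Finally, $(\mathcal{F}_t)_{t\ge 0}$-adaptedness of $(y_{q+1},\mathring{R}_{q+1})$ is inherited from that of $(y_q,\mathring{R}_q)$, $B$ (hence $\Upsilon_l$), and the one-sided mollifiers; determinism on $[t_{q+1},0]$ follows because $B$ is frozen on $[-2,0]$; and the parameter inequalities \eqref{est 464}--\eqref{est 471} are checked at the end by first taking $b$ large subject to \eqref{est 446}, then $a\in 5\mathbb{N}$ large, with $\beta$ chosen in \eqref{est 315b} close to $1-\gamma_2/2$.

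I expect the decisive difficulty to be the estimate of $R_{\text{Com2}}$: the naive bound \eqref{est 346} is far too weak, since $\mathcal{B}$ is wasted on the non-divergence-form term while both $\Lambda^{2-\gamma_2}$ and $\nabla$ fall on the singular factor $y_{q+1}$. The decomposition \eqref{est 389}--\eqref{est 391} resolves this, but its delicate point is that $R_{\text{Com2,2}}$ forces the oscillation error to be used \emph{twice}, so the bound on $R_O$ must be sharp enough to still close \eqref{est 318} after paying the extra factor $\|\Upsilon-\Upsilon_l\|_{C_{t,q+1}}\|\Upsilon_l^{-1}\|_{C_{t,q+1}}\lesssim l^{1/2-2\delta}e^{L^{1/4}}$; propagating the $L$-dependent quantities $m_L$, $M_0(t)$ and $e^{L^{1/4}}$ through this double use while keeping every power of $\lambda_{q+1}$ negative --- together with the gradient-form trick for the non-divergence nonlinearity already exploited at $q=0$ in Proposition \ref{Proposition 5.1} --- is where the real work lies.
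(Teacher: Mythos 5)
Your proposal follows the paper's architecture faithfully: mollification at scale $l=\lambda_{q+1}^{-\alpha}$, the amplitude correction $\bar a_{k,j}=\Upsilon_l^{-1/2}a_{k,j}$, the decomposition of $\mathring{R}_{q+1}$ into transport, Nash/corrector, linear, oscillation and the two commutator errors, and in particular the two genuinely new devices --- the gradient-form pressure trick for the non-divergence nonlinearity and the splitting \eqref{est 389}--\eqref{est 391} of $R_{\text{Com2}}$, with $R_{\text{Com2,1}}$ beaten by the $l^{\frac12-2\delta}$ smallness of $\Upsilon-\Upsilon_l$ and $R_{\text{Com2,2}}$ reusing the bound \eqref{est 307} on $R_O$ --- are correctly identified. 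Two concrete points in your construction of $w_{q+1}$ do not survive inspection, however. First, your normalization $D(j,q)=\sqrt{\delta_{q+1}/(\gamma_2\lambda_{q+1}^{2-\gamma_2})}$ contradicts your own cancellation requirement: feeding it through \eqref{est 265}, the low-frequency output of the oscillation term is
\begin{equation*}
\gamma_2\lambda_{q+1}^{2-\gamma_2}D(j,q)^2\left(\Id-\frac{\mathring{R}_{q,j}}{\lambda_{q+1}^{2-\gamma_2}\delta_{q+1}M_0(\tau_{q+1}j)}\right)=\delta_{q+1}\Id-\frac{\mathring{R}_{q,j}}{\lambda_{q+1}^{2-\gamma_2}M_0(\tau_{q+1}j)},
\end{equation*}
which does not cancel $\sum_j\chi_j^2\mathring{R}_{q,j}$. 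Since the inductive bound \eqref{est 318} is $\epsilon_\gamma M_0(t)\lambda_{q+1}^{2-\gamma_2}\delta_{q+1}$, exact cancellation forces $D(j,q)=M_0(\tau_{q+1}j)^{\frac12}\delta_{q+1}^{\frac12}/\sqrt{\gamma_2}$ as in \eqref{est 447}, which is also what produces the size $e^{\frac12 L^{\frac14}}M_0(t)^{\frac12}\delta_{q+1}^{\frac12}$ required in \eqref{est 327}.

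Second, you build $w_{q+1}$ from Eulerian phases $k^{\bot}e^{i\lambda_{q+1}k\cdot x}$ with the amplitude outside $\mathbb{P}_{q+1,k}$ and with $\mathring{R}_{q,j}$ frozen at time $\tau_{q+1}j$, whereas the paper takes $w_{q+1}=\sum_{j,k}\chi_j\mathbb{P}_{q+1,k}(\bar a_{k,j}\,b_k(\lambda_{q+1}\Phi_j))$ with $\Phi_j$ and $\mathring{R}_{q,j}$ both transported by $D_{t,q}$ as in \eqref{est 354}--\eqref{est 355}. That Lagrangian structure is what makes $D_{t,q}$ annihilate both the phase and the argument of $\gamma_k$, so that $D_{t,q}w_{q+1}$ in \eqref{est 350b} and the transport error $R_T$ consist only of the commutator $[\Upsilon_l\Lambda^{2-\gamma_2}y_l\cdot\nabla,\mathbb{P}_{q+1,k}]$, the $\partial_t\chi_j$ term, and the $\partial_t\Upsilon_l^{-\frac12}$ term (the last being the genuinely stochastic contribution absent from \cite{Y23a}). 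With your Eulerian blocks the full advection term $\Upsilon_l\Lambda^{2-\gamma_2}y_l\cdot\nabla w_{q+1}$ survives in $R_T$ and in $D_{t,q}w_{q+1}$; it is of size $e^{L^{\frac14}}m_LM_0\lambda_q^{2-\gamma_2}\delta_q^{\frac12}\lambda_{q+1}\delta_{q+1}^{\frac12}$ before the gain of $\lambda_{q+1}^{-1}$ from $\mathcal{B}\tilde P_{\approx\lambda_{q+1}}$, i.e.\ larger by a factor $\lambda_{q+1}/\lambda_q$ than the commutator term actually estimated in \eqref{est 397}. The super-exponential frequencies and the freedom to push $\beta$ toward $1-\frac{\gamma_2}{2}$ after fixing $b$ might still absorb this, but that is a nontrivial re-run of the bookkeeping in \eqref{est 397}, \eqref{est 465}, and \eqref{est 468} that you have not carried out; adopting the flow maps as in the paper avoids the issue entirely. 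Your amplitude-outside-the-projector variant would additionally need a corrector and an extra frequency cutoff to secure both $\nabla\cdot w_{q+1}=0$ and \eqref{est 163}, neither of which is needed for \eqref{est 347}.
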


\begin{proof}[Proof of Theorem \ref{Theorem 2.1} assuming Proposition \ref{Proposition 5.2}]
Assuming Proposition \ref{Proposition 5.2} holds, Theorem \ref{Theorem 2.1} can be readily proved similarly to previous works (e.g. \cite{Y23a}); thus, we leave this in the Appendix for completeness. 
\end{proof}

\subsection{Proof of Proposition \ref{Proposition 5.2}} 
Throughout this section, our estimates will be for $t \in [t_{q+1}, T_{L}]$. Due to \eqref{est 445} we can take $a \in 5 \mathbb{N}$ large such that 
\begin{equation}\label{est 475}
a \geq e^{8} 
\end{equation} 
and $\beta > 1 - \frac{\gamma_{2}}{2}$ sufficiently close to $1 - \frac{\gamma_{2}}{2}$ and achieve \eqref{est 153b}. First, we compute 
\begin{align}
\lVert y_{q} - y_{l} \rVert_{C_{t,x,q+1}} \lesssim& l \lVert y_{q} \rVert_{C_{t,q+1}C_{x}^{1}} + l \lVert y_{q} \ast_{x} \phi_{l} \rVert_{C_{t,q+1}^{1}C_{x}}  \nonumber  \\\overset{\eqref{est 330} \eqref{est 317}}{\lesssim}& l m_{L}^{2} M_{0}(t) e^{L^{\frac{1}{4}}}  \lambda_{q}^{3-\gamma_{2}} \delta_{q}  [ \lambda_{q}^{-2 + \gamma_{2} + \beta} + 1 ] \lesssim  l m_{L}^{2} M_{0}(t) e^{L^{\frac{1}{4}}}  \lambda_{q}^{3-\gamma_{2}} \delta_{q}, \label{est 340} 
\end{align} 
where the last inequality used $\beta \leq 2-\gamma_{2}$ due to \eqref{est 315b}. Next, we decompose the Reynolds stress $\mathring{R}_{q+1}$ carefully as follows: first, using \eqref{est 320}, \eqref{est 164}, and \eqref{est 341} gives us 
\begin{align}
& \divergence \mathring{R}_{q+1} - \nabla p_{q+1} = - \Upsilon_{l} [ ( \Lambda^{2-\gamma_{2}} y_{l} \cdot \nabla) y_{l} - (\nabla y_{l})^{T} \cdot \Lambda^{2- \gamma_{2}} y_{l} ] - \nabla p_{l} + \divergence \mathring{R}_{l} + \divergence R_{\text{Com1}}   \nonumber  \\
&+ \partial_{t} w_{q+1} + \frac{1}{2} w_{q+1} + \Lambda^{\gamma_{1}} w_{q+1} + \Upsilon [ ( \Lambda^{2-\gamma_{2}} y_{q+1} \cdot \nabla) y_{q+1} - (\nabla y_{q+1})^{T} \cdot \Lambda^{2- \gamma_{2}} y_{q+1} ]. \label{est 342} 
\end{align} 
We rewrite the nonlinear terms by using \eqref{est 164} as follows: 
\begin{align}
& - \Upsilon_{l} [ ( \Lambda^{2- \gamma_{2}} y_{l} \cdot \nabla) y_{l} - ( \nabla y_{l})^{T} \cdot \Lambda^{2- \gamma_{2}} y_{l} ] + \Upsilon [(\Lambda^{2- \gamma_{2}} y_{q+1} \cdot \nabla) y_{q+1} - (\nabla y_{q+1})^{T} \cdot \Lambda^{2- \gamma_{2}} y_{q+1} ] \nonumber \\
=& (\Upsilon - \Upsilon_{l}) [(\Lambda^{2-\gamma_{2}} y_{q+1} \cdot\nabla) y_{q+1} - (\nabla y_{q+1})^{T} \cdot \Lambda^{2-\gamma_{2}} y_{q+1} ] \nonumber\\
&+ \Upsilon_{l} [ (\Lambda^{2-\gamma_{2}} w_{q+1} \cdot \nabla )y_{l} + (\Lambda^{2-\gamma_{2}} w_{q+1} \cdot \nabla) w_{q+1} + \Lambda^{2-\gamma_{2}} y_{l} \cdot \nabla w_{q+1} \nonumber \\
& \hspace{5mm} - (\nabla w_{q+1})^{T} \cdot \Lambda^{2-\gamma_{2}} y_{l} - (\nabla w_{q+1})^{T} \cdot \Lambda^{2-\gamma_{2}} w_{q+1} - (\nabla y_{l})^{T} \cdot \Lambda^{2-\gamma_{2}} w_{q+1} ]. \label{est 343} 
\end{align}
We can improve one of the singular terms within \eqref{est 343} by rewriting 
\begin{equation}\label{est 344}
-\Upsilon_{l} (\nabla w_{q+1})^{T} \cdot\Lambda^{2- \gamma_{2}} y_{l} = \Upsilon_{l} ( \nabla \Lambda^{2- \gamma_{2}} y_{l})^{T} \cdot w_{q+1} - \Upsilon_{l} \nabla (w_{q+1} \cdot \Lambda^{2- \gamma_{2} } y_{l}), 
\end{equation} 
where the latter term can be included within the pressure term. Applying \eqref{est 343}-\eqref{est 344} to \eqref{est 342} gives 
\begin{equation}\label{est 190} 
\divergence \mathring{R}_{q+1} = \divergence (R_{T} + R_{N} + R_{L} + R_{O} + R_{\text{Com1}} + R_{\text{Com2}})
\end{equation} 
and 
\begin{equation*}
p_{q+1} \triangleq p_{l} + \Upsilon_{l} (w_{q+1} \cdot \Lambda^{2- \gamma_{2}} y_{l})
\end{equation*} 
if we define, in addition to $R_{\text{Com1}}$ and $p_{l}$ in \eqref{est 338},  
\begin{subequations}\label{est 345} 
\begin{align}
\divergence R_{T} \triangleq&  \partial_{t} w_{q+1} + \Upsilon_{l} (\Lambda^{2-\gamma_{2}} y_{l} \cdot \nabla) w_{q+1}, \label{est 345a}\\
\divergence R_{N} \triangleq& \Upsilon_{l} [ (\nabla \Lambda^{2-\gamma_{2}} y_{l})^{T} \cdot w_{q+1} + (\Lambda^{2-\gamma_{2}} w_{q+1} \cdot \nabla) y_{l} - (\nabla y_{l})^{T} \cdot \Lambda^{2-\gamma_{2}} w_{q+1} ], \label{est 345c} \\
\divergence R_{L} \triangleq&  \Lambda^{\gamma_{1}} w_{q+1} +  \frac{1}{2} w_{q+1}, \label{est 345d} \\
\divergence R_{O} \triangleq& \divergence \mathring{R}_{l} + \Upsilon_{l} [ (\Lambda^{2- \gamma_{2}} w_{q+1} \cdot \nabla) w_{q+1} - (\nabla w_{q+1})^{T} \cdot \Lambda^{2-\gamma_{2}} w_{q+1} ], \label{est 345b} \\
\divergence R_{\text{Com2}} \triangleq& (\Upsilon - \Upsilon_{l}) [(\Lambda^{2- \gamma_{2}} y_{q+1} \cdot \nabla) y_{q+1} - (\nabla y_{q+1})^{T} \cdot \Lambda^{2-\gamma_{2}} y_{q+1} ], \label{est 345e} 
\end{align}
\end{subequations} 
representing respectively the Reynolds stress errors of transport, Nash, linear, oscillation, and second commutator types. 

We recall $D_{t,q}$ from \eqref{est 349} and define $\Phi_{j}(t,x)$ for all $j \in \{ \lfloor t_{q+1} \tau_{q+1}^{-1} \rfloor, \hdots, \lceil T_{L} \tau_{q+1}^{-1} \rceil\}$ that solves 
\begin{equation}\label{est 354} 
D_{t,q} \Phi_{j} = 0,  \hspace{5mm} \Phi_{j} (\tau_{q+1} j, x) = x, 
\end{equation} 
$\mathring{R}_{q,j}$ that solves 
\begin{equation}\label{est 355}
D_{t,q} \mathring{R}_{q,j} = 0, \hspace{5mm}  \mathring{R}_{q,j} (\tau_{q+1} j, x) = \mathring{R}_{l} (\tau_{q+1} j, x),
\end{equation} 
as well as 
\begin{subequations}\label{est 484}
\begin{align}
\bar{a}_{k,j} (t,x) \triangleq& \Upsilon_{l}^{-\frac{1}{2}} a_{k,j}(t,x) \label{est 351}\\
\triangleq& \Upsilon_{l}^{-\frac{1}{2}}  \frac{ M_{0} (\tau_{q+1} j)^{\frac{1}{2}}}{\sqrt{\gamma_{2}}}\delta_{q+1}^{\frac{1}{2}} \gamma_{k} \left( \Id - \frac{ \mathring{R}_{q,j}(t,x)}{\lambda_{q+1}^{2- \gamma_{2}} \delta_{q+1}  M_{0} (\tau_{q+1} j)} \right); \label{est 447}
\end{align} 
\end{subequations}
we point out that $a_{k,j}$ defined in \eqref{est 447} is different from that defined in the case of an additive noise (see \cite[Equation (132)]{Y23a}). The fact that $\left( \Id - \frac{ \mathring{R}_{q,j}(t,x)}{\lambda_{q+1}^{2- \gamma_{2}} \delta_{q+1} M_{0} (\tau_{q+1} j)} \right)$ lies in the domain of $\gamma_{k}$ can be verified as 
\begin{align*}
\left\lvert \frac{ \mathring{R}_{q,j} (t,x)}{\lambda_{q+1}^{2- \gamma_{2}} \delta_{q+1}  M_{0} (\tau_{q+1} j)} \right\rvert \overset{\eqref{est 197a}  \eqref{est 355}}{\leq} \frac{ \lVert \mathring{R}_{l} (\tau_{q+1} j, x) \rVert_{C_{x}}}{ \lambda_{q+1}^{2- \gamma_{2}} \delta_{q+1}  M_{0} (\tau_{q+1} j)} \overset{\eqref{est 318}}{\leq} \epsilon_{\gamma}.
\end{align*}
Furthermore, with $\chi_{j}$ from \eqref{est 209} and $b_{k}$ from \eqref{est 178}, we define 
\begin{equation}\label{est 347}  
w_{q+1} (t,x) \triangleq  \sum_{j,k}  \chi_{j}(t) \mathbb{P}_{q+1, k} ( \bar{a}_{k,j} (t,x) b_{k} (\lambda_{q+1} \Phi_{j} (t,x))). 
\end{equation} 
Because $\supp \widehat{\mathbb{P}_{q+1,k},f} \subset \{ \xi: \frac{7}{8} \lambda_{q+1} \leq \lvert \xi \rvert \leq \frac{9}{8} \lambda_{q+1}\}$ from Section \ref{Section 5.1}, we see that $w_{q+1}$ in \eqref{est 347} satisfies \eqref{est 163}. This, together with Hypothesis \ref{Hypothesis 5.1} \ref{Hypothesis 5.1 (a)} at level $q$, implies that $y_{q+1} = y_{l} + w_{q+1}$ from \eqref{est 164}  has its frequency support contained in $B(0, 2 \lambda_{q+1})$. In turn, this leads \eqref{est 342}  to imply $\supp \hat{\mathring{R}}_{q+1} \subset B(0, 4 \lambda_{q+1})$ as desired. Additionally, we define 
\begin{subequations}\label{est 448}
\begin{align}
&\tilde{w}_{q+1, j, k} (t,x) \triangleq \chi_{j}(t) \bar{a}_{k,j}(t,x) b_{k} (\lambda_{q+1} \Phi_{j}(t,x)),\label{est 348}\\
& \psi_{q+1, j, k} (t,x) \triangleq \frac{c_{k} (\lambda_{q+1} \Phi_{j} (t,x))}{c_{k} (\lambda_{q+1} x)} \overset{\eqref{est 178} }{=} e^{i \lambda_{q+1} (\Phi_{j} (t,x) - x) \cdot k} \label{est 200b} 
\end{align}
\end{subequations}
where $c_{k}$ is defined in \eqref{est 178}. It follows that  
\begin{equation}\label{est 208}
w_{q+1} = \sum_{j, k} \mathbb{P}_{q+1, k} \tilde{w}_{q+1, j, k} \hspace{1mm} \text{ and } \hspace{1mm}  b_{k} (\lambda_{q+1} \Phi_{j} (x)) = b_{k} (\lambda_{q+1} x) \psi_{q+1, j, k} (x)
\end{equation} 
due to \eqref{est 347} and \eqref{est 178}. 

\begin{proposition}\label{Proposition 5.3}  
Define $D_{t,q}$ by \eqref{est 349}. Then $w_{q+1}$ defined in \eqref{est 347} satisfies the following estimates: for $C_{1}$ from \eqref{est 202} and all $t \in [t_{q+1}, T_{L}]$, 
\begin{subequations}\label{est 350} 
\begin{align}
&\lVert w_{q+1}(t) \rVert_{C_{x}} \leq 2 \sup_{k \in \Gamma_{1} \cup \Gamma_{2}} \lVert \gamma_{k} \rVert_{C(B(\Id, \epsilon_{\gamma} ))} \frac{ C_{1} e^{\frac{1}{2} L^{\frac{1}{4}}} M_{0} (t)^{\frac{1}{2}}}{\sqrt{\gamma_{2}}} \delta_{q+1}^{\frac{1}{2}},\label{est 350a} \\
&\lVert D_{t,q} w_{q+1} (t) \rVert_{C_{x}}  \lesssim e^{\frac{1}{2} L^{\frac{1}{4}}}  M_{0}(t)^{\frac{1}{2}} \delta_{q+1}^{\frac{1}{2}}\tau_{q+1}^{-1}. \label{est 350b}
\end{align}
\end{subequations} 
Consequently, \eqref{est 327}, as well as all of \eqref{est 316}, \eqref{est 317}, and \eqref{est 319} at level $q+1$ hold, completing the verification of the  Hypothesis \ref{Hypothesis 5.1} \ref{Hypothesis 5.1 (a)} and \ref{Hypothesis 5.1 (c)} at level $q+1$.  
\end{proposition}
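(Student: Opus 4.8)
The plan is to prove the two pointwise-in-$x$ estimates \eqref{est 350a} and \eqref{est 350b} directly from the explicit formula \eqref{est 347} for $w_{q+1}$, and then to read off \eqref{est 327} together with \eqref{est 316}, \eqref{est 317}, \eqref{est 319} at level $q+1$ as consequences. For \eqref{est 350a} I would bound each summand in \eqref{est 347} using $\lVert \mathbb{P}_{q+1,k} \rVert_{C_{x} \mapsto C_{x}} \leq C_{1}$ from \eqref{est 202}, the unit bound $\lvert b_{k} \rvert \leq 1$ from \eqref{est 178}, the pointwise bound on $a_{k,j}$ coming from \eqref{est 447} (whose argument of $\gamma_{k}$ was already checked to lie in $B(\Id,\epsilon_{\gamma})$ using \eqref{est 318}), and the factor $\lVert \Upsilon_{l}^{-\frac{1}{2}} \rVert_{C_{t,0}} \leq e^{\frac{1}{2} L^{\frac{1}{4}}}$ from \eqref{est 332} that relates $\bar a_{k,j}$ to $a_{k,j}$ via \eqref{est 351}; since at each $t$ at most two of the cut-offs $\chi_{j}$ are active and $M_{0}$ is non-decreasing up to $T\wedge L$, this is a direct computation producing exactly the constant in \eqref{est 350a}.

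The estimate \eqref{est 350b} is the core of the statement. Splitting, for each $(j,k)$, $D_{t,q}[\chi_{j}\mathbb{P}_{q+1,k}(\bar a_{k,j} b_{k}(\lambda_{q+1}\Phi_{j}))]$ into $\mathbb{P}_{q+1,k} D_{t,q}[\chi_{j}\bar a_{k,j} b_{k}(\lambda_{q+1}\Phi_{j})]$ plus the commutator $[\Upsilon_{l}\Lambda^{2-\gamma_{2}} y_{l}\cdot\nabla,\, \mathbb{P}_{q+1,k}](\chi_{j}\bar a_{k,j} b_{k}(\lambda_{q+1}\Phi_{j}))$ (the time derivative commutes with the spatial multiplier), I would use that $D_{t,q}$ annihilates $b_{k}(\lambda_{q+1}\Phi_{j})$ by \eqref{est 354} and annihilates $a_{k,j}$ because the argument of $\gamma_{k}$ in \eqref{est 447} is $D_{t,q}$-transported by \eqref{est 355} (the normalizing factors $M_{0}(\tau_{q+1}j)$, $\lambda_{q+1}$, $\delta_{q+1}$ being constant in $(t,x)$). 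Thus only $(\partial_{t}\chi_{j})\bar a_{k,j} b_{k}(\lambda_{q+1}\Phi_{j})$ and $\chi_{j}(\partial_{t}\Upsilon_{l}^{-\frac{1}{2}}) a_{k,j} b_{k}(\lambda_{q+1}\Phi_{j})$ survive in the first term; the former is dominant since $\lvert\partial_{t}\chi_{j}\rvert\lesssim\tau_{q+1}^{-1}$ and reproduces the right-hand side of \eqref{est 350b}, while the latter is controlled by the mollifier estimate $\lVert\partial_{t}\Upsilon_{l}\rVert_{C_{t,0}}\lesssim l^{-\frac{1}{2}-2\delta}m_{L}^{2}$ via \eqref{est 324}, \eqref{est 332} (together with the chain rule and the $C_{t,0}$-bound on $\Upsilon_{l}^{-1}$), and is subordinate to $\tau_{q+1}^{-1}$ once $b$ is large by \eqref{est 210} and \eqref{est 315b}. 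The commutator is handled by the kernel bounds of $\mathbb{P}_{q+1,k}$ from Section \ref{Section 5.1}, which gain $\lambda_{q+1}^{-1}$ against the $\lambda_{q+1}$ cost of differentiating $b_{k}(\lambda_{q+1}\Phi_{j})$ and leave $\lVert\Upsilon_{l}\Lambda^{2-\gamma_{2}}y_{l}\rVert_{C_{x}^{1}}$ times the $C_{x}$-size of the packet; by Hypothesis \ref{Hypothesis 5.1} \ref{Hypothesis 5.1 (a)} at level $q$, Bernstein's inequality and \eqref{est 210}, this too lies below $\tau_{q+1}^{-1}$ after $b$ is taken large.

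For the ``consequently'' part: \eqref{est 327} follows from $y_{q+1}-y_{q}=w_{q+1}+(y_{l}-y_{q})$, with $w_{q+1}$ bounded by \eqref{est 350a}, $y_{l}-y_{q}$ by \eqref{est 340} (lower order for $a$ large), and $C_{0}\geq 2^{4}\sup_{k}\lVert\gamma_{k}\rVert_{C(B(\Id,\epsilon_{\gamma}))}C_{1}$ from \eqref{est 481} together with $\gamma_{2}\geq 1$ and $e^{\frac{1}{2} L^{\frac{1}{4}}}\leq m_{L}$ closing the constant. The bounds \eqref{est 316} and \eqref{est 317} at level $q+1$ follow from $y_{q+1}=y_{l}+w_{q+1}$: the $y_{l}$ contribution is controlled by Hypothesis \ref{Hypothesis 5.1} \ref{Hypothesis 5.1 (a)} at level $q$ and is of strictly lower order in $\lambda$ after mollification, while $w_{q+1}$ is frequency-localized to $\{\xi: \frac{\lambda_{q+1}}{2}\leq\lvert\xi\rvert\leq 2\lambda_{q+1}\}$ by \eqref{est 163}, so Bernstein's inequality (e.g. \eqref{est 155a}) turns \eqref{est 350a} into the $C_{x}^{2-\gamma_{2}}$- and $\Lambda^{2-\gamma_{2}}$-bounds, the universal Bernstein constant being absorbed into $C_{0}$ and $m_{L}$ by taking $L$ and $a$ large. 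For \eqref{est 319} at level $q+1$ one bridges the level-$(q+1)$ material derivative to $D_{t,q}$: the $D_{t,q}w_{q+1}$-part is \eqref{est 350b}, the $D_{t,q}y_{l}$-part is handled by Hypothesis \ref{Hypothesis 5.1} \ref{Hypothesis 5.1 (c)} at level $q$ plus mollifier estimates, and the difference of the transport fields $\Upsilon\Lambda^{2-\gamma_{2}}y$ across the two levels is a lower-order error; the parameter choices \eqref{est 315b}--\eqref{est 315c} and the largeness of $b$ keep the total below the right-hand side of \eqref{est 319}.

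I expect the main obstacle to be \eqref{est 350b}. The new structural feature, relative to earlier convex-integration schemes, is the operator $\Lambda^{2-\gamma_{2}}$ sitting inside the transport field $\Upsilon_{l}\Lambda^{2-\gamma_{2}}y_{l}$ of $D_{t,q}$ (cf. Remark \ref{Remark 2.1}): it makes the transport coefficient, hence the commutator $[\Upsilon_{l}\Lambda^{2-\gamma_{2}}y_{l}\cdot\nabla,\, \mathbb{P}_{q+1,k}]$ and the bridging to the level-$(q+1)$ material derivative needed for \eqref{est 319}, more singular than in the Navier--Stokes setting, so that keeping these errors below $e^{\frac{1}{2} L^{\frac{1}{4}}}M_{0}(t)^{\frac{1}{2}}\delta_{q+1}^{\frac{1}{2}}\tau_{q+1}^{-1}$ forces the sharp use of \eqref{est 210}, the range \eqref{est 315b}--\eqref{est 315c}, and the largeness of $a$ and $b$.
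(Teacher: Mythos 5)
Your proposal is correct and follows essentially the same route as the paper: the same direct bound for \eqref{est 350a} via \eqref{est 202}, \eqref{est 332}, and the two-active-cutoffs observation; the same decomposition of $D_{t,q}w_{q+1}$ into the transport commutator with $\mathbb{P}_{q+1,k}$ plus the surviving $\partial_{t}\chi_{j}$ and $\partial_{t}\Upsilon_{l}^{-\frac{1}{2}}$ terms using \eqref{est 354}--\eqref{est 357}; and the same derivation of \eqref{est 327}, \eqref{est 316}, \eqref{est 317}, \eqref{est 319} from $y_{q+1}=y_{l}+w_{q+1}$, the frequency localization \eqref{est 163}, and the parameter conditions on $b$. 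The only cosmetic difference is your sharper H\"older-based bound $l^{-\frac{1}{2}-2\delta}$ for $\partial_{t}\Upsilon_{l}$ where the paper uses the cruder $l^{-1}$; both close under \eqref{est 465}.
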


\begin{proof}[Proof of Proposition \ref{Proposition 5.3}]
First, due to \eqref{est 347}, \eqref{est 351},  and the fact that for all $t$ there exist at most two non-trivial cutoffs, we can compute 
\begin{equation}\label{est 352} 
\lVert w_{q+1}(t) \rVert_{C_{x}} \overset{\eqref{est 202} \eqref{est 332} \eqref{est 178} }{\leq}  2\sup_{k \in \Gamma_{1} \cup \Gamma_{2}} \lVert \gamma_{k} \rVert_{C(B(\Id, \epsilon_{\gamma} ))}   e^{\frac{1}{2} L^{\frac{1}{4}}}  \frac{C_{1} M_{0}(t)^{\frac{1}{2}}}{\sqrt{\gamma_{2}}} \delta_{q+1}^{\frac{1}{2}}. 
\end{equation} 
Next, combining the facts 
\begin{equation}\label{est 464} 
b> \frac{22(L^{2}+1)}{5+ 3 \gamma_{2}}
\end{equation} 
due to \eqref{est 446} and   $1 - \frac{\beta}{2} > \frac{5+ 3 \gamma_{2}}{11}$ due to \eqref{est 315b} gives us 
\begin{align}
& L^{2} + b( \beta - \alpha)+ 3 - \gamma_{2} - 2 \beta \overset{\eqref{est 315c} \eqref{est 464}}{<} L^{2} + \left[ \frac{22(L^{2} + 1)}{5 + 3 \gamma_{2}} \right] \left(-1 + \frac{\beta}{2} \right) + 3 - \gamma_{2} - 2 \beta \nonumber \\
<& L^{2} + \left( \frac{ 2(L^{2} + 1)}{1- \frac{\beta}{2}} \right)\left(-1 + \frac{\beta}{2} \right) + 3 - \gamma_{2} - 2 \beta  \overset{\eqref{est 315b}}{<}  -L^{2} - 1. \label{est 399} 
\end{align}
As a consequence of \eqref{est 352}, \eqref{est 475}, and \eqref{est 399}, we see that for all $a \in 5 \mathbb{N}$ sufficiently large 
\begin{align}
\lVert y_{q+1}(t) - y_{q}(t) \rVert_{C_{x}} &\overset{\eqref{est 164}}{\leq} \lVert w_{q+1} \rVert_{C_{t,x,q+1}} + \lVert y_{l} - y_{q} \rVert_{C_{t,x,q+1}} \nonumber \\
\overset{\eqref{est 164}  \eqref{est 340}  }{\lesssim}& e^{\frac{1}{2} L^{\frac{1}{4}}} \delta_{q+1}^{\frac{1}{2}} M_{0}(t)^{\frac{1}{2}}  \left( 1+ a^{L^{2} + b^{q} [ b(\beta - \alpha) + 3 - \gamma_{2} - 2 \beta]} \right) \leq C_{0} e^{\frac{1}{2} L^{\frac{1}{4}}}  \delta_{q+1}^{\frac{1}{2}} M_{0}(t)^{\frac{1}{2}}  \label{est 482}
\end{align} 
which implies \eqref{est 327}; we recall that in \eqref{est 481} we chose the universal constant $C_{0}$ sufficiently large to satisfy the last inequality of \eqref{est 482}. Also, via Young's inequality for convolution, we can prove \eqref{est 316} at level $q+1$ as follows: 
\begin{equation}\label{est 353}
\lVert y_{q+1} \rVert_{C_{t,x,q+1}} \overset{\eqref{est 164} }{\leq} \lVert y_{l} \rVert_{C_{t,x,q+1}} + \lVert w_{q+1} \rVert_{C_{t,x,q+1}} \overset{\eqref{est 316} \eqref{est 350a}}{\leq}C_{0} \left( 1+ \sum_{1 \leq j \leq q+1} \delta_{j}^{\frac{1}{2}} \right) m_{L} M_{0}(t)^{\frac{1}{2}}
\end{equation} 
where we used that $C_{0}$ satisfies \eqref{est 481}. Similarly, using \eqref{est 164} and the fact that \eqref{est 315b} guarantees $\beta \leq 2- \gamma_{2}$ and $C_{0}$ satisfies \eqref{est 481}, we can prove \eqref{est 317} at level $q+1$ as
\begin{align*}
& \lVert y_{q+1} \rVert_{C_{t,q+1}C_{x}^{2- \gamma_{2}}} + \lVert \Lambda^{2-\gamma_{2}} y_{q+1} \rVert_{C_{t,x,q+1}}   \\
\overset{\eqref{est 163} \eqref{est 317} \eqref{est 350a}   }{\leq}&C_{0} m_{L} M_{0}(t)^{\frac{1}{2}} \lambda_{q+1}^{2- \gamma_{2}} \delta_{q+1}^{\frac{1}{2}} \left[ \frac{1}{2} + a^{b^{q} [ (2-\gamma_{2} - \beta)(1-b)]} \right] \overset{\eqref{est 350a}}{\leq} C_{0} m_{L} M_{0}(t)^{\frac{1}{2}} \lambda_{q+1}^{2- \gamma_{2}} \delta_{q+1}^{\frac{1}{2}}. \nonumber
\end{align*}
Thus, we verified the Hypothesis \ref{Hypothesis 5.1} \ref{Hypothesis 5.1 (a)} at level $q+1$. Next, we can compute from \eqref{est 349}, \eqref{est 351}, and \eqref{est 355}, 
\begin{equation}\label{est 356} 
D_{t,q} \bar{a}_{k,j} (t,x)  =  \partial_{t} \Upsilon_{l}^{-\frac{1}{2}} \frac{ M_{0} (\tau_{q+1} j)^{\frac{1}{2}}}{\sqrt{\gamma_{2}}} \delta_{q+1}^{\frac{1}{2}} \gamma_{k} \left( \Id - \frac{ \mathring{R}_{q,j} (t,x)}{\lambda_{q+1}^{2-\gamma_{2}} \delta_{q+1}  M_{0} (\tau_{q+1} j)} \right);
\end{equation} 
we note that the analogous term in the case of additive noise vanishes (see \cite[Equation (141)]{Y23a}). On the other hand, by relying on \eqref{est 349}, \eqref{est 178}, and \eqref{est 354}, we can verify that 
\begin{equation}\label{est 357} 
D_{t,q} b_{k} (\lambda_{q+1} \Phi_{j} (t,x)) = 0 \hspace{1mm} \text{ and } \hspace{1mm} D_{t,q} c_{k} (\lambda_{q+1} \Phi_{j} (t,x)) = 0.
\end{equation} 
Using \eqref{est 208}, \eqref{est 348}, \eqref{est 349}, and the identities \eqref{est 356}-\eqref{est 357} we can compute
\begin{align}
 D_{t,q} w_{q+1}& =  \sum_{j,k} [ D_{t,q}, \mathbb{P}_{q+1, k} ] \tilde{w}_{q+1, j, k} + \mathbb{P}_{q+1, k} [\partial_{t}\chi_{j}  \bar{a}_{k,j}  b_{k} (\lambda_{q+1} \Phi_{j}) \label{est 358}\\
& \hspace{10mm} + \chi_{j} \partial_{t} \Upsilon_{l}^{-\frac{1}{2}} \frac{ M_{0}(\tau_{q+1} j)^{\frac{1}{2}}}{\sqrt{\gamma_{2}}} \delta_{q+1}^{\frac{1}{2}} \gamma_{k} \left( \Id - \frac{ \mathring{R}_{q,j}}{\lambda_{q+1}^{2-\gamma_{2}} \delta_{q+1} M_{0} (\tau_{q+1} j)} \right) b_{k} (\lambda_{q+1} \Phi_{j}) ] \nonumber 
\end{align}
where $[a,b] \triangleq ab - ba$. Then we can deduce \eqref{est 350b} by relying on \cite[Equation (A.18)]{BSV19} for $a \in 5 \mathbb{N}$ sufficiently large as follows:
\begin{align*} 
& \lVert D_{t,q} w_{q+1} (t) \rVert_{C_{x}}  \overset{\eqref{est 358} \eqref{est 202}}{\lesssim}   \sum_{j,k} \lVert \nabla (\Upsilon_{l} \Lambda^{2- \gamma_{2}} y_{l})(t) \rVert_{C_{x}} \lVert \tilde{w}_{q+1, j, k} (t) \rVert_{C_{x}} \nonumber \\
& \hspace{12mm} + \lVert \partial_{t}\chi_{j}(t) \bar{a}_{k,j} (t,x) b_{k} (\lambda_{q+1} \Phi_{j} (t,x)) \rVert_{C_{x}} \nonumber \\
& \hspace{12mm} + \left\lVert \chi_{j} (t) \Upsilon_{l}^{-\frac{3}{2}} \partial_{t} \Upsilon_{l}  M_{0} (\tau_{q+1} j)^{\frac{1}{2}} \delta_{q+1}^{\frac{1}{2}} \gamma_{k} \left( \Id - \frac{ \mathring{R}_{q,j} (t,x)}{\lambda_{q+1}^{2-\gamma_{2}} \delta_{q+1}  M_{0} (\tau_{q+1} j)} \right)  \right\rVert_{C_{x}} \nonumber \\
& \overset{\eqref{est 332} \eqref{est 317}\eqref{est 32} \eqref{est 475} }{\lesssim} e^{\frac{1}{2}L^{\frac{1}{4}}} M_{0}(t)^{\frac{1}{2}} \delta_{q+1}^{\frac{1}{2}} \tau_{q+1}^{-1} [a^{L^{2}} \lambda_{q}^{3- \gamma_{2}}\delta_{q}^{\frac{1}{2}}  \lambda_{q+1}^{-\frac{\alpha}{2}} \lambda_{q+1}^{-\frac{3-\gamma_{2}}{2}} \lambda_{q+1}^{\frac{\beta}{2}}   + 1  + a^{L^{2}} \lambda_{q+1}^{\frac{\alpha}{2}} \lambda_{q+1}^{-\frac{3-\gamma_{2}}{2}} \lambda_{q+1}^{\frac{\beta}{2}} ] \\
& \hspace{35mm} \lesssim e^{\frac{1}{2} L^{\frac{1}{4}}} M_{0}(t)^{\frac{1}{2}} \delta_{q+1}^{\frac{1}{2}} \tau_{q+1}^{-1}, 
\end{align*}  
where the last inequality used the facts that $\alpha + 3 - \gamma_{2} - \beta > \frac{38 - 8 \gamma_{2}}{11}$ and $- \alpha + 3 - \gamma_{2} - \beta > \frac{4- 2 \gamma_{2}}{11}$ due to \eqref{est 315a} and \eqref{est 315c} which permit 
\begin{equation}\label{est 465}
b > \frac{ 2 [ L^{2} + 3 - \gamma_{2} - \beta]}{\alpha + 3 - \gamma_{2} - \beta}  \hspace{3mm} \text{ and } \hspace{3mm} b > \frac{2L^{2}}{-\alpha + 3 - \gamma_{2} - \beta},  
\end{equation}
both of which are implied by \eqref{est 446}. 

Next, we recall that the notation of $y_{l}$ and $\Upsilon_{l}$ from \eqref{est 337} and compute via \eqref{est 349} and \eqref{est 164} 
\begin{align}
&D_{t, q+1} y_{q+1} \label{est 359} \\
=& D_{t,q} w_{q+1} + [\Upsilon_{\lambda_{q+2}^{-\alpha}} \Lambda^{2-\gamma_{2}} y_{l} \ast_{x} \phi_{\lambda_{q+2}^{-\alpha}} \ast_{t} \varphi_{\lambda_{q+2}^{-\alpha}} - \Upsilon_{\lambda_{q+1}^{-\alpha}} \Lambda^{2-\gamma_{2}} y_{l} ] \cdot \nabla w_{q+1}  +  (D_{t,q} y_{q})\ast_{x} \phi_{\lambda_{q+1}^{-\alpha}} \ast_{t} \varphi_{\lambda_{q+1}^{-\alpha}} \nonumber \\
& \hspace{5mm} +\Upsilon_{\lambda_{q+2}^{-\alpha}} \Lambda^{2- \gamma_{2}} y_{l} \ast_{x} \phi_{\lambda_{q+2}^{-\alpha}} \ast_{t} \varphi_{\lambda_{q+2}^{-\alpha}} \cdot \nabla y_{l}  -  [\Upsilon_{\lambda_{q+1}^{-\alpha}} \Lambda^{2-\gamma_{2}} y_{l} \cdot \nabla y_{q}] \ast_{x} \phi_{\lambda_{q+1}^{-\alpha}} \ast_{v} \varphi_{\lambda_{q+1}^{-\alpha}}\nonumber \\
& \hspace{5mm} + \Upsilon_{\lambda_{q+2}^{-\alpha}} \Lambda^{2- \gamma_{2}} w_{q+1} \ast_{x} \phi_{\lambda_{q+2}^{-\alpha}} \ast_{t} \varphi_{\lambda_{q+2}^{-\alpha}} \cdot\nabla w_{q+1} + \Upsilon_{\lambda_{q+2}^{-\alpha}} \Lambda^{2-\gamma_{2}} w_{q+1}\ast_{x} \phi_{\lambda_{q+2}^{-\alpha}} \ast_{t} \varphi_{\lambda_{q+2}^{-\alpha}} \cdot \nabla y_{l}. \nonumber
\end{align}
We can apply Young's inequality for convolution and \eqref{est 332} to verify \eqref{est 319} at level $q+1$ as follows:
\begin{align} 
&\lVert D_{t,q+1} y_{q+1} \rVert_{C_{t,x,q+1}} \overset{\eqref{est 359} \eqref{est 332} \eqref{est 163} }{\lesssim} \lVert D_{t,q} w_{q+1} \rVert_{C_{t,x,q+1}} +  e^{L^{\frac{1}{4}}} \lVert \Lambda^{2-\gamma_{2}} y_{q} \rVert_{C_{t,x,q+1}} \lambda_{q+1} \lVert w_{q+1} \rVert_{C_{t,x,q+1}} \nonumber \\
&\hspace{32mm}+ \lVert D_{t,q} y_{q} \rVert_{C_{t,x,q+1}} +  e^{L^{\frac{1}{4}}}  \lVert \Lambda^{2-\gamma_{2}} y_{q} \rVert_{C_{t,x,q+1}} \lambda_{q}^{\gamma_{2} - 1} \lVert \Lambda^{2-\gamma_{2}} y_{q} \rVert_{C_{t,x,q+1}} \nonumber \\
&\hspace{33mm}+ e^{L^{\frac{1}{4}}}  \lambda_{q+1}^{3-\gamma_{2}} \lVert w_{q+1} \rVert_{C_{t,x,q+1}}^{2} +  e^{L^{\frac{1}{4}}}   \lambda_{q+1}^{2-\gamma_{2}} \lVert w_{q+1} \rVert_{C_{t,x,q+1}} \lambda_{q}^{\gamma_{2} -1} \lVert \Lambda^{2-\gamma_{2}} y_{q} \rVert_{C_{t,x,q+1}}  \nonumber \\
&\overset{\eqref{est 350}\eqref{est 317} \eqref{est 319}}{\lesssim}  e^{\frac{1}{2} L^{\frac{1}{4}}} \tau_{q+1}^{-1} M_{0}(t)^{\frac{1}{2}} \delta_{q+1}^{\frac{1}{2}}  +  C_{0} e^{\frac{3}{2} L^{\frac{1}{4}}} m_{L} M_{0}(t) \delta_{q}^{\frac{1}{2}} \lambda_{q}^{2-\gamma_{2}} \lambda_{q+1} \delta_{q+1}^{\frac{1}{2}} \nonumber \\
& \hspace{11mm} + C_{0} L^{\frac{1}{2}}  e^{2L^{\frac{1}{4}}} M_{0}(t) \lambda_{q}^{3- \gamma_{2}} \delta_{q} + C_{0}^{2} e^{L^{\frac{1}{4}}} m_{L}^{2} M_{0}(t) \delta_{q} \lambda_{q}^{3-\gamma_{2}}  \nonumber \\
& \hspace{11mm} + e^{2L^{\frac{1}{4}}}  M_{0}(t) \lambda_{q+1}^{3-\gamma_{2}} \delta_{q+1} +  C_{0} e^{\frac{3}{2}L^{\frac{1}{4}}}  m_{L} \lambda_{q+1}^{2-\gamma_{2}} M_{0}(t) \delta_{q+1}^{\frac{1}{2}} \delta_{q}^{\frac{1}{2}} \lambda_{q} \nonumber \\
& \hspace{5mm} \overset{\eqref{est 210} \eqref{est 32} \eqref{est 475}}{\lesssim} L^{\frac{1}{2}}  e^{2L^{\frac{1}{4}}} M_{0}(t) \lambda_{q+1}^{3-\gamma_{2}} \delta_{q+1} [ a^{b^{q+1} [ \frac{-3+ \gamma_{2}}{2} + \frac{\beta}{2} + \frac{\alpha}{2}]} + C_{0}a^{L + b^{q} [b(-2+ \gamma_{2} + \beta) + 2 - \gamma_{2} - \beta]} + 1]  \nonumber \\
& \hspace{9mm} \leq C_{0}   L^{\frac{1}{2}}e^{2L^{\frac{1}{4}}} M_{0}(t) \lambda_{q+1}^{3-\gamma_{2}} \delta_{q+1}, \label{est 483}
\end{align} 
where at the last inequality, we relied on the fact that  $-3 + \gamma_{2} + \beta + \alpha < \frac{ - 4 + 2 \gamma_{2}}{11}$ due to \eqref{est 315c} and 
\begin{equation}\label{est 466} 
b > \frac{L + 2 - \gamma_{2} - \beta}{2- \gamma_{2} - \beta} 
\end{equation} 
due to \eqref{est 446} where $2 - \gamma_{2} - \beta > 0$ due to \eqref{est 315b}. 
\end{proof} 

\begin{proposition}\label{Proposition 5.4} 
Let $a_{k,j}$ and $\bar{a}_{k,j}$ be defined by \eqref{est 484}; moreover, let $\psi_{q+1, j, k}$ be defined by \eqref{est 200b}. Then they satisfy the following estimates: for all $t \in \supp \chi_{j}$, 
\begin{subequations}\label{est 360}
\begin{align}
& \lVert D^{N} a_{k,j} (t) \rVert_{C_{x}} \lesssim M_{0}(t)^{\frac{1}{2}} \lambda_{q}^{N} \delta_{q+1}^{\frac{1}{2}} \hspace{1mm} \forall \hspace{1mm} N \in \mathbb{N}_{0}, \label{est 360c}\\
& \lVert D^{N} \bar{a}_{k,j} (t) \rVert_{C_{x}} \lesssim e^{\frac{1}{2} L^{\frac{1}{4}}} M_{0}(t)^{\frac{1}{2}} \lambda_{q}^{N} \delta_{q+1}^{\frac{1}{2}} \hspace{1mm} \forall \hspace{1mm} N \in \mathbb{N}_{0}, \label{est 360a}\\
&\lVert \psi_{q+1, j,k} \rVert_{C_{t,x,q+1}} \leq 1, \label{est 360d}\\
& \lVert D^{N} \psi_{q+1, j, k} (t) \rVert_{C_{x}} \lesssim m_{L} e^{L^{\frac{1}{4}}} M_{0}(t)^{\frac{1}{2}}  \lambda_{q+1} \tau_{q+1} \lambda_{q}^{N+ 2 - \gamma_{2}}  \delta_{q}^{\frac{1}{2}} \hspace{1mm} \forall \hspace{1mm} N \in \mathbb{N}. \label{est 360b}
\end{align} 
\end{subequations}
One of the useful consequences includes the following estimate: for all $t \in \supp \chi_{j}$, 
\begin{equation} \label{est 424}
\lVert \nabla (a_{k,j} \psi_{q+1, j, k})(t) \rVert_{C_{x}}  \lesssim  M_{0}(t)^{\frac{1}{2}} \lambda_{q} \delta_{q+1}^{\frac{1}{2}}.
\end{equation} 
\end{proposition}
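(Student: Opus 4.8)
The plan is to reduce each bound in \eqref{est 360} to (i) transport and flow-map estimates for $\Phi_j$ and $\mathring R_{q,j}$ from \eqref{est 354}--\eqref{est 355}, which are transported along the vector field $\Upsilon_l\Lambda^{2-\gamma_2}y_l$ appearing in $D_{t,q}$ of \eqref{est 349}, and (ii) the chain rule (Fa\`a di Bruno's formula) applied to the two compositions $\gamma_k\bigl(\Id-\mathring R_{q,j}/(\lambda_{q+1}^{2-\gamma_2}\delta_{q+1}M_0(\tau_{q+1}j))\bigr)$ and $e^{i\lambda_{q+1}(\Phi_j-x)\cdot k}$ that define $a_{k,j},\bar a_{k,j}$ in \eqref{est 484} and $\psi_{q+1,j,k}$ in \eqref{est 200b}.

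First I would record the flow estimates. Since $\Upsilon_l$ depends only on $t$, combining \eqref{est 332}, Young's inequality for convolution, Hypothesis \ref{Hypothesis 5.1}(a), and Bernstein's inequality (using $\supp\hat y_q\subset B(0,2\lambda_q)$, hence the same for $y_l$) gives $\|\nabla(\Upsilon_l\Lambda^{2-\gamma_2}y_l)(t)\|_{C_x^N}\lesssim e^{L^{1/4}}m_LM_0(t)^{1/2}\lambda_q^{N+3-\gamma_2}\delta_q^{1/2}$ for all $N\in\mathbb N_0$. With $\tau_{q+1}$ from \eqref{est 210} one checks $\tau_{q+1}\|\nabla(\Upsilon_l\Lambda^{2-\gamma_2}y_l)\|_{C_x}\lesssim1$, using $a\geq e^8$ from \eqref{est 475} and the lower bounds on $b$ collected in \eqref{est 446}; this is the short-interval condition under which the standard flow-map and transport estimates — those of \cite[Appendix]{BSV19} already invoked in the proof of Proposition \ref{Proposition 5.3} — apply with constants uniform in $q$ over $t\in\supp\chi_j$. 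They give $\|D(\Phi_j(t)-x)\|_{C_x}\lesssim\tau_{q+1}e^{L^{1/4}}m_LM_0(t)^{1/2}\lambda_q^{3-\gamma_2}\delta_q^{1/2}\lesssim1$ and $\|D^N(\Phi_j(t)-x)\|_{C_x}\lesssim\tau_{q+1}e^{L^{1/4}}m_LM_0(t)^{1/2}\lambda_q^{N+2-\gamma_2}\delta_q^{1/2}$ for $N\geq1$, and, since $\mathring R_{q,j}$ is merely transported from $\mathring R_l(\tau_{q+1}j)$ with Gronwall constant $O(1)$, $\|D^N\mathring R_{q,j}(t)\|_{C_x}\lesssim\|\mathring R_l(\tau_{q+1}j)\|_{C_x^N}$. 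Because the frequency support of $\mathring R_q$ lies in $B(0,4\lambda_q)$ by Hypothesis \ref{Hypothesis 5.1}(b), Bernstein and \eqref{est 318} yield $\|\mathring R_l(\tau_{q+1}j)\|_{C_x^N}\lesssim\lambda_q^N\epsilon_\gamma M_0(\tau_{q+1}j)\lambda_{q+1}^{2-\gamma_2}\delta_{q+1}$; moreover $M_0(\tau_{q+1}j)\approx M_0(t)$ on $\supp\chi_j$ since $0\le M_0'\le8LM_0$ from \eqref{est 326} and $\tau_{q+1}\to0$ as $a\to\infty$.

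From here the amplitude bounds are immediate. Writing $z_j\triangleq\Id-\mathring R_{q,j}/(\lambda_{q+1}^{2-\gamma_2}\delta_{q+1}M_0(\tau_{q+1}j))$, the previous step gives $\|D^Nz_j(t)\|_{C_x}\lesssim\lambda_q^N$ and $z_j(t,x)\in B(\Id,\epsilon_\gamma)=\mathrm{Dom}(\gamma_k)$ (as noted just before \eqref{est 347}), so Fa\`a di Bruno gives $\|D^N(\gamma_k\circ z_j)(t)\|_{C_x}\lesssim\lambda_q^N$, the $N=0$ case being $\leq\|\gamma_k\|_{C(B(\Id,\epsilon_\gamma))}$; multiplying by the $x$-independent prefactor $M_0(\tau_{q+1}j)^{1/2}\delta_{q+1}^{1/2}/\sqrt{\gamma_2}\approx M_0(t)^{1/2}\delta_{q+1}^{1/2}$ yields \eqref{est 360c}, and then $\bar a_{k,j}=\Upsilon_l^{-1/2}a_{k,j}$ together with $\|\Upsilon_l^{-1/2}\|_{C_{t,0}}\le e^{\frac12L^{1/4}}$ from \eqref{est 332} gives \eqref{est 360a}. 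Since $\psi_{q+1,j,k}=e^{i\lambda_{q+1}(\Phi_j-x)\cdot k}$ has unit modulus, $\|\psi_{q+1,j,k}\|_{C_{t,x,q+1}}\le1$, which is \eqref{est 360d}; for $N\geq1$, differentiating the exponential produces, by Fa\`a di Bruno, sums of products of the factors $\lambda_{q+1}D^m(\Phi_j-x)$ with $1\le m\le N$ times $\psi_{q+1,j,k}$, and since $\lambda_{q+1}\|D(\Phi_j-x)\|_{C_x}\lesssim1$ the dominant term is $\lambda_{q+1}\|D^N(\Phi_j-x)\|_{C_x}$, which gives \eqref{est 360b}. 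Finally \eqref{est 424} follows from $\nabla(a_{k,j}\psi_{q+1,j,k})=(\nabla a_{k,j})\psi_{q+1,j,k}+a_{k,j}\nabla\psi_{q+1,j,k}$: the first summand is $\lesssim M_0(t)^{1/2}\lambda_q\delta_{q+1}^{1/2}$ by \eqref{est 360c} ($N=1$) and \eqref{est 360d}, and the second is $\lesssim M_0(t)^{1/2}\delta_{q+1}^{1/2}\cdot m_Le^{L^{1/4}}M_0(t)^{1/2}\lambda_{q+1}\tau_{q+1}\lambda_q^{3-\gamma_2}\delta_q^{1/2}$ by \eqref{est 360c} ($N=0$) and \eqref{est 360b} ($N=1$), which is again $\lesssim M_0(t)^{1/2}\lambda_q\delta_{q+1}^{1/2}$ once $\lambda_{q+1}\tau_{q+1}\lambda_q^{2-\gamma_2}\delta_q^{1/2}\lesssim1$, a consequence of \eqref{est 446}.

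The only genuinely delicate point is the transport step: extracting $q$-uniform constants from the flows of $\Upsilon_l\Lambda^{2-\gamma_2}y_l$, which hinges on the short-interval estimate $\tau_{q+1}\|\nabla(\Upsilon_l\Lambda^{2-\gamma_2}y_l)\|_{C_x}\lesssim1$; once that is secured it is routine. Relative to the additive-noise scheme of \cite{Y23a} the only new features are the $x$-independent weight $\Upsilon_l^{-1/2}$ in $\bar a_{k,j}$ (harmless, since it commutes with every $D^N$) and the operator $\Lambda^{2-\gamma_2}$ inside $D_{t,q}$ from \eqref{est 349}, which costs an extra Bernstein factor $\lambda_q^{2-\gamma_2}$ — visible in \eqref{est 360b} — in the bound on the transporting velocity; the resulting powers of $\lambda_q$ are all negative once $b$ satisfies \eqref{est 446}, so every $L$-dependent, $q$-independent prefactor is absorbed.
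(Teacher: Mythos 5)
Your proposal is correct and follows essentially the same route as the paper: the short-interval condition $\tau_{q+1}\lVert D(\Upsilon_{l}\Lambda^{2-\gamma_{2}}y_{l})\rVert_{C_{t,x,q+1}}\lesssim 1$ (the paper's \eqref{est 362}), the transport/flow-map estimates \eqref{est 197}--\eqref{est 199} for $\mathring{R}_{q,j}$ and $\Phi_{j}$ combined with Bernstein on the mollified, frequency-localized quantities, the chain-rule estimate (the paper cites \cite[Equation (130)]{BDIS15} where you invoke Fa\`a di Bruno) for the compositions with $\gamma_{k}$ and the exponential, the time-only factor $\Upsilon_{l}^{-1/2}$ costing $e^{\frac{1}{2}L^{1/4}}$, and \eqref{est 374} to absorb the cross term in \eqref{est 424}. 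No gaps.
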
 

\begin{proof}[Proof of Proposition \ref{Proposition 5.4}]
It is immediate from \eqref{est 484} that 
\begin{equation*}
\lVert a_{k,j} (t) \rVert_{C_{x}} \lesssim M_{0}(t)^{\frac{1}{2}} \delta_{q+1}^{\frac{1}{2}} \hspace{1mm} \text{ and } \hspace{1mm} \lVert \bar{a}_{k,j} (t) \rVert_{C_{x}} \overset{\eqref{est 332}}{\lesssim} e^{\frac{1}{2} L^{\frac{1}{4}}} M_{0}(t)^{\frac{1}{2}} \delta_{q+1}^{\frac{1}{2}}.
\end{equation*} 
For $N \in \mathbb{N}$ we compute by the chain rule estimate (e.g. \cite[Equation (130)]{BDIS15}) and \eqref{est 332}
\begin{align} 
& \lVert D^{N} \bar{a}_{k,j}(t) \rVert_{C_{x}} \lesssim e^{\frac{1}{2} L^{\frac{1}{4}}}  M_{0}(\tau_{q+1} j)^{\frac{1}{2}}  \delta_{q+1}^{\frac{1}{2}} \label{est 361}\\
& \hspace{6mm}  \times \left[ \frac{1}{\lambda_{q+1}^{2-\gamma_{2}} \delta_{q+1} M_{0}(\tau_{q+1} j)} \lVert D^{N} \mathring{R}_{q,j} \rVert_{C_{t,x,q+1}} + \left( \frac{1}{\lambda_{q+1}^{2-\gamma_{2}} \delta_{q+1} M_{0}(\tau_{q+1} j)} \right)^{N} \lVert D\mathring{R}_{q,j} \rVert_{C_{t,x,q+1}}^{N} \right] \nonumber 
\end{align}
and the estimate for $\lVert D^{N} a_{k,j}(t) \rVert_{C_{x}}$ is identical except it has no factor of $e^{\frac{1}{2} L^{\frac{1}{4}}}$. The inequality of 
\begin{equation*}
\alpha + 3 - \gamma_{2} - \beta > \frac{38 - 8 \gamma_{2}}{11} 
\end{equation*}
that can be verified via \eqref{est 315a}, \eqref{est 315b}, and \eqref{est 315c} allows us to observe 
\begin{align}
b > \frac{ 2 [ L^{2} + 3 - \gamma_{2} - \beta]}{\alpha + 3 - \gamma_{2} - \beta} \label{est 467}
\end{align}
due to \eqref{est 446}; in turn, this allows us to estimate 
\begin{align}\label{est 362} 
\tau_{q+1} \lVert D \Upsilon_{l} \Lambda^{2- \gamma_{2}} y_{l} \rVert_{C_{t,x,q+1}} \overset{\eqref{est 332} \eqref{est 317}}{\lesssim}& \tau_{q+1} e^{L^{\frac{1}{4}}} \lambda_{q} ( m_{L}  M_{0}(t)^{\frac{1}{2}} \delta_{q}^{\frac{1}{2}} \lambda_{q}^{2- \gamma_{2}})  \nonumber \\
\overset{\eqref{est 210} \eqref{est 32}\eqref{est 475}}{\lesssim}&  a^{L^{2} + b^{q} [b( - \frac{\alpha}{2} - \frac{3}{2} + \frac{\gamma_{2}}{2} + \frac{\beta}{2}) + 3 - \gamma_{2} - \beta]} \lesssim 1
\end{align}  
which leads us to estimate by relying on \eqref{est 197b} and \eqref{est 318}  
\begin{equation}\label{est 363}
\lVert D \mathring{R}_{q,j} (t) \rVert_{C_{x}} \leq \lVert D \mathring{R}_{l} (\tau_{q+1} j) \rVert_{C_{x}} e^{(t - \tau_{q+1} j) \lVert D \Upsilon_{l} \Lambda^{2-\gamma_{2}} y_{l} \rVert_{C_{t,x,q+1}}}  \lesssim\lambda_{q} M_{0} (\tau_{q+1} j)  \lambda_{q+1}^{2- \gamma_{2}} \delta_{q+1} 
\end{equation} 
and for $N \in \mathbb{N}\setminus \{1\}$ 
\begin{align}
\lVert D^{N} \mathring{R}_{q,j}(t) \rVert_{C_{x}} & \overset{\eqref{est 198} \eqref{est 332}}{\lesssim}  \lambda_{q}^{N} \lVert \mathring{R}_{q}(\tau_{q+1} j) \rVert_{C_{x}} + \tau_{q+1} \lambda_{q}^{N} e^{L^{\frac{1}{4}}} \lVert \Lambda^{2-\gamma_{2}} y_{q} \rVert_{C_{t,x,q+1}} \lambda_{q}\lVert \mathring{R}_{q}(\tau_{q+1} j) \rVert_{C_{x}}  \nonumber \\
&  \overset{\eqref{est 317} \eqref{est 210} \eqref{est 475}}{\lesssim}  \lambda_{q}^{N} M_{0} (\tau_{q+1} j) \lambda_{q+1}^{2- \gamma_{2}} \delta_{q+1} [ 1 + a^{b^{q} [ b(-\frac{\alpha}{2} - \frac{3}{2} + \frac{\gamma_{2}}{2} + \frac{\beta}{2} ) + 3 - \gamma_{2} - \beta]} a^{L^{2}} ] \nonumber \\
& \hspace{40mm} \lesssim \lambda_{q}^{N} M_{0}(\tau_{q+1} j) \lambda_{q+1}^{2-\gamma_{2}} \delta_{q+1}. \label{est 364} 
\end{align}
Applying \eqref{est 363}-\eqref{est 364} to \eqref{est 361} allows us to conclude \eqref{est 360a}, and the estimate of \eqref{est 360c} follows too. 

Next, \eqref{est 360d} is clear from \eqref{est 200b} and hence we now consider \eqref{est 360b} in the case $N = 1$ and use \cite[Equation (130)]{BDIS15} to estimate 
\begin{align*}
\lVert D \psi_{q+1, j, k} (t) \rVert_{C_{x}} \overset{\eqref{est 200b}}{\lesssim}& \lambda_{q+1} \lVert \nabla \Phi_{j}(t) - \Id \rVert_{C_{x}}  \nonumber\\
&\overset{\eqref{est 199a} \eqref{est 332} \eqref{est 362}\eqref{est 317}}{\lesssim} \lambda_{q+1} \tau_{q+1} m_{L} e^{L^{\frac{1}{4}}} M_{0}(t)^{\frac{1}{2}}  \lambda_{q}^{3-\gamma_{2}} \delta_{q}^{\frac{1}{2}}.
\end{align*}
For $N \in \mathbb{N} \setminus \{1\}$, we estimate by \cite[Equation (130)]{BDIS15} 
\begin{align*}
&\lVert D^{N} \psi_{q+1, j, k}(t)  \rVert_{C_{x}}  \\
&\overset{\eqref{est 200b}}{\lesssim} \lambda_{q+1} \lVert \Phi_{j}(t) \rVert_{C_{x}^{N}}  + \lambda_{q+1}^{N} \lVert \nabla \Phi_{j} (t) - \Id \rVert_{C_{x}}^{N} \nonumber \\
&\overset{\eqref{est 199}\eqref{est 362} }{\lesssim}   \lambda_{q}^{N} \left[ \lambda_{q+1} \tau_{q+1} e^{L^{\frac{1}{4}}} m_{L}  M_{0}(t)^{\frac{1}{2}} \delta_{q}^{\frac{1}{2}} \lambda_{q}^{2- \gamma_{2}}  + [\lambda_{q+1} \tau_{q+1} e^{L^{\frac{1}{4}}}  m_{L} M_{0}(t)^{\frac{1}{2}} \delta_{q}^{\frac{1}{2}} \lambda_{q}^{2-\gamma_{2}}]^{N} \right] \nonumber \\
& \hspace{5mm} \overset{\eqref{est 317}}{\lesssim} m_{L} e^{L^{\frac{1}{4}}} M_{0}(t)^{\frac{1}{2}}  \lambda_{q+1} \tau_{q+1} \lambda_{q}^{N+ 2 - \gamma_{2}}  \delta_{q}^{\frac{1}{2}} 
\end{align*}
where the last inequality also used the fact that $1 - \gamma_{2} + \alpha - \beta > \frac{16- 8 \gamma_{2}}{11}$ due to \eqref{est 315b} and \eqref{est 315c} allowing us 
\begin{align}\label{est 450} 
b > \frac{ 2[ 2 - \gamma_{2} - \beta + L^{2}]}{1- \gamma_{2} + \alpha - \beta}
\end{align}
that is guaranteed by \eqref{est 446} so that 
\begin{equation}\label{est 374}
e^{L^{\frac{1}{4}}} m_{L} M_{0}(t)^{\frac{1}{2}} \lambda_{q+1} \tau_{q+1} \delta_{q}^{\frac{1}{2}} \lambda_{q}^{2-\gamma_{2}}\lesssim a^{b^{q} [ b (\frac{ - 1 + \gamma_{2}}{2} - \frac{\alpha}{2} + \frac{\beta}{2}) + 2 - \gamma_{2} - \beta] + L^{2}}  \ll 1.
\end{equation} 
At last, to prove \eqref{est 424}, we rely on \eqref{est 360c} to obtain for all $t \in \supp \chi_{j}$, 
\begin{align*}
\lVert   \nabla (a_{k,j} \psi_{q+1, j, k})(t) \rVert_{C_{x}} \lesssim M_{0}(t)^{\frac{1}{2}} \lambda_{q} \delta_{q+1}^{\frac{1}{2}}  [ 1 + m_{L} e^{L^{\frac{1}{4}}} M_{0}(t)^{\frac{1}{2}} \lambda_{q+1} \tau_{q+1} \lambda_{q}^{2- \gamma_{2}}  \delta_{q}^{\frac{1}{2}} ] \lesssim  M_{0}(t)^{\frac{1}{2}} \lambda_{q} \delta_{q+1}^{\frac{1}{2}}, 
\end{align*}
where the last inequality utilized \eqref{est 374}. 
\end{proof} 

\subsubsection{Bounds on $R_{T}$, the transport error}
From \eqref{est 345a} we write using \eqref{est 208} and \eqref{est 348}   
\begin{align*}
R_{T} =&   \sum_{j,k} 1_{\supp \chi_{j}} \mathcal{B} [ [\Upsilon_{l} \Lambda^{2- \gamma_{2}} y_{l} \cdot \nabla, \mathbb{P}_{q+1, k} ] \tilde{w}_{q+1, j, k}  \nonumber\\
& \hspace{5mm} + \mathbb{P}_{q+1, k} ([\partial_{t} + \Upsilon_{l} \Lambda^{2- \gamma_{2}} y_{l} \cdot \nabla] ( \chi_{j} \bar{a}_{k,j}  b_{k} (\lambda_{q+1} \Phi_{j})))]
\end{align*}
where we can use \eqref{est 356}-\eqref{est 357} to rewrite 
\begin{align*}
& [\partial_{t} + \Upsilon_{l} \Lambda^{2- \gamma_{2}} y_{l} \cdot \nabla ] ( \chi_{j} \bar{a}_{k,j}  b_{k} (\lambda_{q+1} \Phi_{j}) ) \nonumber \\
=& \partial_{t} \chi_{j} \bar{a}_{k,j}  b_{k} (\lambda_{q+1} \Phi_{j}) \nonumber \\
&+ \chi_{j}  \partial_{t} \Upsilon_{l}^{-\frac{1}{2}} \frac{M_{0} (\tau_{q+1} j)^{\frac{1}{2}}}{\sqrt{\gamma_{2}}} \delta_{q+1}^{\frac{1}{2}} \gamma_{k} \left( \Id - \frac{ \mathring{R}_{q,j}}{\lambda_{q+1}^{2-\gamma_{2}} \delta_{q+1} M_{0} (\tau_{q+1} j)} \right) b_{k} (\lambda_{q+1} \Phi_{j}) 
\end{align*}
so that we arrive at 
\begin{align*}
&R_{T} =  \sum_{j,k} 1_{\supp \chi_{j}} \mathcal{B} [[\Upsilon_{l} \Lambda^{2- \gamma_{2}} y_{l} \cdot \nabla, \mathbb{P}_{q+1, k} ] \tilde{w}_{q+1, j, k} 
 + \mathbb{P}_{q+1, k} (\partial_{t} \chi_{j} \bar{a}_{k,j}  b_{k} (\lambda_{q+1} \Phi_{j}))   \\
 &+ \mathbb{P}_{q+1,k} \left( \chi_{j} \partial_{t} \Upsilon_{l}^{-\frac{1}{2}} \frac{M_{0} (\tau_{q+1} j)^{\frac{1}{2}}}{\sqrt{\gamma_{2}}} \delta_{q+1}^{\frac{1}{2}} \gamma_{k} \left( \Id - \frac{\mathring{R}_{q,j} }{\lambda_{q+1}^{2-\gamma_{2}} \delta_{q+1}  M_{0}(\tau_{q+1} j)} \right) b_{k} (\lambda_{q+1} \Phi_{j}) \right) ]. \nonumber
\end{align*}
By definitions of $\mathbb{P}_{q+1, k}$ and $\tilde{P}_{\approx \lambda_{q+1}}$ from \eqref{est 192} and Section \ref{Section 5.1}, as long as $a \geq \frac{16}{5}$ and $b \geq 2$, we can write  
\begin{align}
R_{T} =& \sum_{j,k} 1_{\supp \chi_{j}} \mathcal{B} \tilde{P}_{\approx \lambda_{q+1}} [ [ \Upsilon_{l} \Lambda^{2- \gamma_{2}} y_{l} \cdot \nabla, \mathbb{P}_{q+1, k} ] \tilde{w}_{q+1, j, k} \label{est 366}\\
& + \mathbb{P}_{q+1, k} (\partial_{t} \chi_{j}  \bar{a}_{k,j}  b_{k} (\lambda_{q+1} \Phi_{j})) \nonumber\\
&+ \mathbb{P}_{q+1, k} \left( \chi_{j} \partial_{t} \Upsilon_{l}^{-\frac{1}{2}} \frac{M_{0} (\tau_{q+1} j)^{\frac{1}{2}}}{\sqrt{\gamma_{2}}} \delta_{q+1}^{\frac{1}{2}} \gamma_{k} \left( \Id - \frac{\mathring{R}_{q,j} }{\lambda_{q+1}^{2-\gamma_{2}} \delta_{q+1}  M_{0}(\tau_{q+1} j)} \right) b_{k} (\lambda_{q+1} \Phi_{j}) \right)]. \nonumber 
\end{align}
Thus, we now estimate by \cite[Lemma A.6]{BSV19} for $a \in 5 \mathbb{N}$ sufficiently large 
\begin{align}
& \lVert R_{T} \rVert_{C_{t,x,q+1}} \label{est 397}\\
&\overset{\eqref{est 366} \eqref{est 360a} \eqref{est 332}}{\lesssim}  \lambda_{q+1}^{-1} \sum_{j,k}  ( \lVert \Upsilon_{l} \nabla \Lambda^{2-\gamma_{2}} y_{l} \rVert_{C_{t,x,q+1}} \lVert 1_{\supp \chi_{j}} \tilde{w}_{q+1, j, k} \rVert_{C_{t,x,q+1}}   \nonumber \\
& \hspace{44mm} + \tau_{q+1}^{-1} e^{\frac{1}{2} L^{\frac{1}{4}}}  M_{0}(t)^{\frac{1}{2}} \delta_{q+1}^{\frac{1}{2}} + e^{\frac{5}{2} L^{\frac{1}{4}}} l^{-1} M_{0}(t)^{\frac{1}{2}} \delta_{q+1}^{\frac{1}{2}} ) \nonumber  \\
&\overset{ \eqref{est 317} \eqref{est 360a}}{\lesssim}  \lambda_{q+1}^{-1} (e^{\frac{3}{2} L^{\frac{1}{4}}} m_{L} M_{0}(t) \lambda_{q}^{3- \gamma_{2}} \delta_{q}^{\frac{1}{2}} \delta_{q+1}^{\frac{1}{2}} + \tau_{q+1}^{-1} e^{\frac{1}{2} L^{\frac{1}{4}}}  M_{0}(t)^{\frac{1}{2}} \delta_{q+1}^{\frac{1}{2}} + e^{ \frac{5}{2} L^{\frac{1}{4}}}  l^{-1} M_{0}(t)^{\frac{1}{2}} \delta_{q+1}^{\frac{1}{2}})  \nonumber \\
& \hspace{12mm}\overset{\eqref{est 210} \eqref{est 32} \eqref{est 475}}{\lesssim}  M_{0}(t) \lambda_{q+2}^{2-\gamma_{2}} \delta_{q+2} [a^{b^{q}[ b^{2} (-2 + \gamma_{2} + 2 \beta) + b(-1 - \beta) + 3 - \gamma_{2} - \beta]} a^{L} \nonumber \\
& \hspace{10mm} + a^{b^{q} [ b^{2} (-2 + \gamma_{2} + 2 \beta) + b( \frac{1-\gamma_{2}}{2} + \frac{\alpha}{2} - \frac{3\beta}{2} ) ]} + a^{b^{q} [ b^{2} (-2 + \gamma_{2} + 2 \beta)+  b(-1 + \alpha - \beta) ]}  ]\ll M_{0}(t) \lambda_{q+2}^{2-\gamma_{2}} \delta_{q+2},  \nonumber 
\end{align}
where in the last inequality we used that 
\begin{equation}\label{est 468} 
b > \frac{3 - \gamma_{2} - \beta + L}{1+ \beta}
\end{equation} 
due to \eqref{est 446}, and that we have a strictly negative upper bounds of $1 - \gamma_{2} + \alpha - 3 \beta < -\frac{1}{2} + \frac{\gamma_{2}}{4}$, $-1 + \alpha - \beta = - \frac{\beta}{2}$ by \eqref{est 315c}, and we took $\beta > 1 - \frac{\gamma_{2}}{2}$ sufficiently close to $1 - \frac{\gamma_{2}}{2}$. 

\subsubsection{Bounds on $R_{N}$, the Nash error}
We define 
\begin{equation}\label{est 370}
N_{1} \triangleq \Upsilon_{l} \mathcal{B} [(\nabla \Lambda^{2- \gamma_{2}} y_{l})^{T} \cdot w_{q+1}] \text{ and } N_{2} \triangleq \Upsilon_{l} \mathcal{B} [ \Lambda^{2- \gamma_{2}} w_{q+1}^{\bot} (\nabla^{\bot} \cdot y_{l} )]
\end{equation} 
so that we can split $R_{N}$ from \eqref{est 345c} using \eqref{est 38} as 
\begin{equation}\label{est 369} 
R_{N} = N_{1} + N_{2}. 
\end{equation} 
For $N_{1}$, considering the frequency support of $y_{q}$ and thus the frequency support of $y_{l}$ due to Hypothesis \ref{Hypothesis 5.1} \ref{Hypothesis 5.1 (a)} and that of $w_{q+1}$ due to \eqref{est 163}, we rewrite it by \eqref{est 347} as 
\begin{equation*}
N_{1}(x) = \Upsilon_{l}  \sum_{j,k} \mathcal{B} \tilde{P}_{\approx \lambda_{q+1}} [( \nabla \Lambda^{2- \gamma_{2}} y_{l})^{T} \cdot \mathbb{P}_{q+1, k} ( \chi_{j} \bar{a}_{k,j} (x) b_{k} (\lambda_{q+1} \Phi_{j} (x)))].
\end{equation*} 
Thus, for $a \in 5 \mathbb{N}$ sufficiently large and $\beta > 1 - \frac{\gamma_{2}}{2}$ sufficiently close to $1 - \frac{\gamma_{2}}{2}$, we attain 
\begin{align}
\lVert N_{1}\rVert_{C_{t,x,q+1}} \overset{\eqref{est 332} \eqref{est 202}}{\lesssim}& e^{L^{\frac{1}{4}}} \lambda_{q+1}^{-1} \sum_{j,k} \lambda_{q} \lVert \Lambda^{2-\gamma_{2}} y_{q} \rVert_{C_{t,x,q+1}} \lVert 1_{\supp \chi_{j}} \bar{a}_{k,j}  \rVert_{C_{t,x,q+1}}   \label{est 372}  \\
\overset{\eqref{est 317} \eqref{est 360a}}{\lesssim}& m_{L} e^{\frac{3}{2}L^{\frac{1}{4}}}  M_{0}(t) \lambda_{q+2}^{2- \gamma_{2}} \delta_{q+2} \lambda_{q+2}^{-2 + \gamma_{2} + 2 \beta} \lambda_{q+1}^{-1-\beta} \lambda_{q}^{3- \gamma_{2} - \beta}  \nonumber\\
\overset{\eqref{est 475} }{\lesssim}& M_{0}(t) \lambda_{q+2}^{2 - \gamma_{2}} \delta_{q+2} a^{b^{q} [ b^{2} (-2 + \gamma_{2} + 2 \beta) + b(-1-\beta) + 3 - \gamma_{2} - \beta]} a^{L} \ll M_{0}(t) \lambda_{q+2}^{2-\gamma_{2}} \delta_{q+2}, \nonumber 
\end{align}
where the last inequality used the fact that 
\begin{align}\label{est 469}
b> \frac{3- \gamma_{2} - \beta + L}{1+ \beta} 
\end{align}
due to \eqref{est 446}. For $N_{2}$ we can first rewrite $w_{q+1}^{\bot}$ as 
\begin{align*}
w_{q+1}^{\bot}(t,x) &\overset{\eqref{est 208}\eqref{est 348}\eqref{est 178} \eqref{est 200b} }{=}   \sum_{j,k}   \mathbb{P}_{q+1, k} (\chi_{j}(t) \bar{a}_{k,j}(t,x)  (-ik e^{ik\cdot \lambda_{q+1} x}) \psi_{q+1, j, k}(t,x) ) \nonumber\\
\hspace{10mm} =&   \sum_{j,k}   \mathbb{P}_{q+1, k} (\chi_{j}(t) \bar{a}_{k,j}(t,x)  (-\frac{1}{\lambda_{q+1}} \nabla e^{ik \cdot \lambda_{q+1} x}) \psi_{q+1, j, k} (t,x) ) \nonumber \\
\hspace{10mm}\overset{\eqref{est 178}}{=}& - \lambda_{q+1}^{-1} \sum_{j,k} \nabla \mathbb{P}_{q+1, k} ( \chi_{j}(t) \bar{a}_{k,j} (t,x) \psi_{q+1, j, k} (t,x) c_{k} (\lambda_{q+1} x) ) \nonumber \\
&\hspace{10mm}+ \lambda_{q+1}^{-1} \sum_{j,k}  \mathbb{P}_{q+1, k} ( \chi_{j}(t) \nabla (\bar{a}_{k,j} (t,x) \psi_{q+1, j, k} (t,x)) c_{k} (\lambda_{q+1} x)).  
\end{align*}
This allows us to deduce by using \eqref{est 370}, \eqref{est 200b}, and the facts that $\mathcal{B}$ consists of $\mathbb{P}$ and $\mathbb{P} \nabla f = 0$ for all $f$, 
\begin{align}
N_{2}(x) =&\Upsilon_{l} \lambda_{q+1}^{-1}  \mathcal{B} ( \nabla ( \nabla^{\bot} \cdot y_{l})(x)  \sum_{j,k} \Lambda^{2- \gamma_{2}} \mathbb{P}_{q+1, k} (\chi_{j} \bar{a}_{k,j} (x) c_{k} (\lambda_{q+1} \Phi_{j} (x))) \nonumber \\
&+  \Upsilon_{l} \lambda_{q+1}^{-1} \mathcal{B} (( \nabla^{\bot} \cdot y_{l})(x) \sum_{j,k} \Lambda^{2-\gamma_{2}} \mathbb{P}_{q+1, k} (\chi_{j} \nabla (\bar{a}_{k,j} (x) \psi_{q+1, j, k} (x)) c_{k} (\lambda_{q+1} x))). \label{est 373}
\end{align} 
Now we are ready to compute from \eqref{est 373} by \cite[Equation (A.11)]{BSV19}, for $a \in 5 \mathbb{N}$ sufficiently large and $\beta > 1 - \frac{\gamma_{2}}{2}$ sufficiently close to $1 - \frac{\gamma_{2}}{2}$, 
\begin{align}
& \lVert N_{2} \rVert_{C_{t,x,q+1}} \overset{ \eqref{est 332}}{\lesssim} e^{L^{\frac{1}{4}}} \lambda_{q+1}^{-2} \sum_{j,k} \lVert \nabla(\nabla^{\bot} \cdot y_{l} ) \rVert_{C_{t,x,q+1}} \lVert \Lambda^{2- \gamma_{2}} \mathbb{P}_{q+1, k} ( \chi_{j} \bar{a}_{k,j}  c_{k} (\lambda_{q+1} \Phi_{j} (x))) \rVert_{C_{t,x,q+1}} \nonumber \\
& \hspace{15mm} + \lVert \nabla^{\bot}y_{l} \rVert_{C_{t,x,q+1}} \lVert \Lambda^{2- \gamma_{2}} \mathbb{P}_{q+1, k} (\chi_{j} \nabla (\bar{a}_{k,j} (x) \psi_{q+1, j, k} (x)) c_{k} (\lambda_{q+1} x) )  \rVert_{C_{t,x,q+1}}  \nonumber\\ 
& \hspace{5mm} \overset{\eqref{est 374}}{\lesssim} e^{L^{\frac{1}{4}}} \lambda_{q+1}^{-2}  \left[ m_{L} e^{\frac{1}{2} L^{\frac{1}{4}}} M_{0}(t) \lambda_{q}^{2} \delta_{q}^{\frac{1}{2}} \lambda_{q+1}^{2-\gamma_{2}} \delta_{q+1}^{\frac{1}{2}} + \lambda_{q} m_{L} M_{0}(t)^{\frac{1}{2}} \delta_{q}^{\frac{1}{2}} \lambda_{q+1}^{2- \gamma_{2}} [ \lambda_{q} \delta_{q+1}^{\frac{1}{2}}  e^{\frac{1}{2} L^{\frac{1}{4}}} M_{0}(t)^{\frac{1}{2}} ] \right] \nonumber \\
&\overset{\eqref{est 360} \eqref{est 374}}{\lesssim} M_{0}(t) \lambda_{q+2}^{2-\gamma_{2}} \delta_{q+2} a^{b^{q} [ b^{2} (-2 + \gamma_{2} + 2 \beta) + b(-\gamma_{2} - \beta) + 2-\beta]  + L}    \ll M_{0}(t) \lambda_{q+2}^{2- \gamma_{2}} \delta_{q+2},  \label{est 375}
\end{align}
where the last inequality used the fact that
\begin{equation}\label{est 476}
b > \frac{2 - \beta + L}{\gamma_{2} + \beta} 
\end{equation} 
due to \eqref{est 446}. Applying \eqref{est 372} and \eqref{est 375} to \eqref{est 369}  allows us to conclude 
\begin{equation}\label{est 376}
\lVert R_{N} \rVert_{C_{t,x,q+1}} \ll M_{0}(t) \lambda_{q+2}^{2- \gamma_{2}}\delta_{q+2}. 
\end{equation} 

\subsubsection{Bounds on $R_{L}$, the linear error}
We split $R_{L}$ rom \eqref{est 345d} as follows: 
\begin{equation}\label{est 377} 
R_{L} = L_{1} + L_{2} \hspace{1mm} \text{ where } L_{1} \triangleq \mathcal{B} \Lambda^{\gamma_{1}} w_{q+1} \hspace{1mm} \text{ and } \hspace{1mm} L_{2} \triangleq \frac{1}{2} \mathcal{B}  w_{q+1}.
\end{equation} 
We can estimate from \eqref{est 377}  for $a \in 5 \mathbb{N}$ sufficiently large and $\beta > 1 - \frac{\gamma_{2}}{2}$ sufficiently close to $1- \frac{\gamma_{2}}{2}$, 
\begin{align}
\lVert L_{1} \rVert_{C_{t,x,q+1}} \overset{\eqref{est 163}}{\lesssim}& \lambda_{q+1}^{\gamma_{1} -1} \lVert w_{q+1} \rVert_{C_{t,x,q+1}} \overset{ \eqref{est 350a}}{\lesssim} \lambda_{q+1}^{\gamma_{1} -1} e^{\frac{1}{2} L^{\frac{1}{4}}}  M_{0}(t)^{\frac{1}{2}} \delta_{q+1}^{\frac{1}{2}} \nonumber\\  
\overset{\eqref{est 35} \eqref{est 315b}}{\lesssim}&  M_{0}(t) \lambda_{q+2}^{2-\gamma_{2}} \delta_{q+2} a^{b^{q+1} [b(-2 + \gamma_{2} +  2 \beta) + \gamma_{1} +  \frac{\gamma_{2}}{2} -2]} \ll M_{0}(t) \lambda_{q+2}^{2-\gamma_{2}} \delta_{q+2}, \label{est 379}  
\end{align} 
where we used in the last inequality the fact that $\gamma_{1} < 2 - \frac{\gamma_{2}}{2}$ due to \eqref{est 315d}. Similarly, starting from \eqref{est 377}, for $\beta > 1 - \frac{\gamma_{2}}{2}$ sufficiently close to $1 - \frac{\gamma_{2}}{2}$ so that 
\begin{align*}
b(-2 + \gamma_{2} + 2 \beta) - 1 - \beta < 0
\end{align*}
and $a \in 5 \mathbb{N}$ sufficiently large, we can estimate
\begin{align}
\lVert L_{2} \rVert_{C_{t,x,q+1}}\overset{\eqref{est 163}}{\lesssim}&  \lambda_{q+1}^{-1} \lVert w_{q+1} \rVert_{C_{t,x,q+1}} \nonumber \\
\overset{\eqref{est 350a}}{\lesssim}&  M_{0}(t) \lambda_{q+2}^{2-\gamma_{2}} \delta_{q+2} a^{b^{q+1} [ b(-2 + \gamma_{2} + 2 \beta) - 1 - \beta]} \ll   M_{0}(t) \lambda_{q+2}^{2-\gamma_{2}}\delta_{q+2}. \label{est 380}
\end{align} 
By applying \eqref{est 379} and \eqref{est 380} to \eqref{est 377}, we can conclude
\begin{equation}\label{est 381} 
\lVert R_{L} \rVert_{C_{t,x,q+1}} \ll  M_{0}(t) \lambda_{q+2}^{2-\gamma_{2}} \delta_{q+2}.
\end{equation} 

\subsubsection{Bounds on $R_{O}$, the oscillation error}
Because $\bar{a}_{k,j} = \Upsilon_{l}^{-\frac{1}{2}} a_{k,j}$ by \eqref{est 351}, we have
\begin{align*}
&\divergence R_{O} \overset{\eqref{est 345b} \eqref{est 347}}{=} \divergence \mathring{R}_{l} \\
&+ \Upsilon_{l} [ ( \Lambda^{2-\gamma_{2}}  \sum_{j,k} \chi_{j} \mathbb{P}_{q+1, k} ( \bar{a}_{k,j} b_{k} (\lambda_{q+1} \Phi_{j} )) \cdot \nabla) \sum_{j', k'} \chi_{j'} \mathbb{P}_{q+1, k'} (\bar{a}_{k', j'} b_{k'} (\lambda_{q+1} \Phi_{j'})) \nonumber \\
& \hspace{5mm} - (\nabla  \sum_{j,k} \chi_{j} \mathbb{P}_{q+1, k} (\bar{a}_{k,j} b_{k} (\lambda_{q+1} \Phi_{j} )))^{T} \cdot \Lambda^{2-\gamma_{2}} \sum_{j', k'} \chi_{j'} \mathbb{P}_{q+1, k'} (\bar{a}_{k', j'} b_{k'} (\lambda_{q+1} \Phi_{j'} ))] \nonumber \\
& \hspace{20mm} \overset{\eqref{est 351}}{=}  \divergence \mathring{R}_{l} \nonumber \\
&+ [ ( \Lambda^{2-\gamma_{2}}  \sum_{j,k} \chi_{j} \mathbb{P}_{q+1, k} ( a_{k,j} b_{k} (\lambda_{q+1} \Phi_{j} )) \cdot \nabla) \sum_{j',k'}\chi_{j'} \mathbb{P}_{q+1, k'} (a_{k', j'} b_{k'} (\lambda_{q+1} \Phi_{j'}))  \nonumber \\
& \hspace{5mm} - (\nabla  \sum_{j,k} \chi_{j} \mathbb{P}_{q+1, k} (a_{k,j} b_{k} (\lambda_{q+1} \Phi_{j} )))^{T} \cdot \Lambda^{2-\gamma_{2}} \sum_{j',k'}\chi_{j'} \mathbb{P}_{q+1, k'} (a_{k', j'} b_{k'} (\lambda_{q+1} \Phi_{j'} ))] \nonumber 
\end{align*}
so that the current oscillation term has an identical structure to that in the additive case; cf. \cite[Equations (125d) and (130)]{Y23a}: 
\begin{align*}
\text{``}  \divergence R_{O} \triangleq& \divergence \mathring{R}_{l} + (\Lambda^{2- \gamma_{2}} w_{q+1} \cdot \nabla) w_{q+1} - (\nabla w_{q+1})^{T} \cdot \Lambda^{2- \gamma_{2}} w_{q+1}\text{'''}
\end{align*}
and 
\begin{align*}
\text{``} w_{q+1}(t,x) \triangleq \sum_{j,k} \chi_{j}(t) \mathbb{P}_{q+1,k}(a_{k,j} (t,x) b_{k} (\lambda_{q+1} \Phi_{j}(t,x))).\text{''}
\end{align*}
To be more precise, $a_{k,j}$ in \cite[Equation (132)]{Y23a}  has a factor of $\frac{ \sqrt{\bar{C}} L^{\frac{1}{2}}}{\sqrt{\epsilon_{\gamma}}}$ and $\frac{\epsilon_{\gamma}}{\lambda_{q+1}^{2-\gamma_{2}} \bar{C}L}$ respectively inside and outside the argument of $\gamma_{k}$ while our  $\mathring{R}_{q,j}$ within the definition of $a_{k,j}$ is defined by \eqref{est 355} rather than \cite[Equation (131)]{Y23a}, but this difference will not be of great significance and we will be able to show 
\begin{equation}\label{est 307} 
e^{2L^{\frac{1}{4}}}\lVert R_{O} \rVert_{C_{t,x,q+1}} \ll M_{0}(t) \lambda_{q+2}^{2- \gamma_{2}} \delta_{q+2}
\end{equation} 
where we chose to include the exponential term $e^{2L^{\frac{1}{4}}}$ for the subsequent convenience in the estimate of $R_{\text{Com2}}$. We leave the details of the derivation of \eqref{est 307} in the Appendix for completeness as it is still more complex than the case of the Navier-Stokes equations and we changed the definition of $a_{k,j}$ in \eqref{est 447} and many more from those in the additive case (cf. \cite[Equation (132)]{Y23a}).  

\subsubsection{Bounds on $R_{\text{Com1}}$, the first commutator error}
We estimate from \eqref{est 338} using $\lVert \mathcal{B} \rVert_{C_{x} \mapsto C_{x}} \lesssim 1$ and the standard commutator estimate (e.g. \cite[Equation (5)]{CDS12b}),  
\begin{align}
&\lVert R_{\text{Com1}} \rVert_{C_{t,x,q+1}} \overset{\eqref{est 332}}{\lesssim}  l^{2} \lVert \Lambda^{2- \gamma_{2}} y_{q}^{\bot} \Upsilon^{\frac{1}{2}} \rVert_{C_{t,q+1 }C_{x}^{1}} \lVert \nabla^{\bot} \cdot y_{q} \Upsilon^{\frac{1}{2}} \rVert_{C_{t, q+1 }C_{x}^{1}} \label{est 339} \\
& \hspace{25mm} + l^{1-4\delta} \lVert \Upsilon \rVert_{C_{t, q+1}^{\frac{1}{2} - 2 \delta}} \lVert \Lambda^{2-\gamma_{2}} y_{q}^{\bot} \ast_{x} \phi_{l} (\nabla^{\bot} \cdot y_{q} \ast_{x} \phi_{l}) \rVert_{C_{t, q+1}^{\frac{1}{2} - 2 \delta}C_{x}}  \nonumber \\
& \hspace{25mm} + e^{L^{\frac{1}{4}}} l^{2} \lVert \Lambda^{2-\gamma_{2}} y_{q}^{\bot} \ast_{x} \phi_{l} \rVert_{C_{t, q+1}^{1}C_{x}} \lVert \nabla^{\bot}\cdot y_{q} \ast_{x} \phi_{l} \rVert_{C_{t, q+1}^{1}C_{x}}  \nonumber  \\
& \hspace{7mm} \overset{\eqref{est 332} \eqref{est 317} \eqref{est 330} }{\lesssim}  l^{2} e^{L^{\frac{1}{4}}}m_{L}^{2} \lambda_{q}^{5- \gamma_{2}} M_{0}(t) \delta_{q} \nonumber \\
&+ l^{1- 4\delta} e^{L^{\frac{1}{4}}} (\lambda_{q}^{2- \gamma_{2}} \lVert y_{q} \rVert_{C_{t, q+1}^{1}C_{x}} \lambda_{q}^{\gamma_{2} -1}   \lVert \Lambda^{2-\gamma_{2}} y_{q} \rVert_{C_{t,x, q+1}}  )^{\frac{1}{2} - 2 \delta} [\lambda_{q}^{\gamma_{2} -1} (m_{L}  M_{0}(t)^{\frac{1}{2}}\delta_{q}^{\frac{1}{2}} \lambda_{q}^{2-\gamma_{2}}  )^{2} ]^{\frac{1}{2}+ 2 \delta} \nonumber \\
& + e^{3L^{\frac{1}{4}}} m_{L}^{4} l^{2}  M_{0}(t)^{2} \lambda_{q}^{9- 3 \gamma_{2}} \delta_{q}^{2}  \nonumber \\
&\overset{\eqref{est 317} \eqref{est 330} }{\lesssim}  M_{0}(t) [ e^{L^{\frac{1}{4}}} m_{L}^{2}  l^{2} \lambda_{q}^{5- \gamma_{2}}  \delta_{q}  \nonumber \\
& \hspace{20mm}+  M_{0}(t)^{\frac{3}{4} - \delta} m_{L}^{\frac{9}{2} - 2 \delta} e^{(\frac{1}{2} - 2 \delta)L^{\frac{1}{4}}} l^{1-4\delta}  \lambda_{q}^{\frac{9-3 \gamma_{2}}{2} + (\gamma_{2} -3) 2 \delta}  \delta_{q}^{\frac{5}{4} - \delta} +   e^{3L^{\frac{1}{4}}} m_{L}^{4} M_{0}(t) l^{2} \lambda_{q}^{9-3 \gamma_{2}} \delta_{q}^{2} ]. \nonumber 
\end{align}
Within the right hand side of \eqref{est 339} we can bound the first term of $e^{L^{\frac{1}{4}}} m_{L}^{2} l^{2}  \lambda_{q}^{5- \gamma_{2}}  \delta_{q}$ by the third term of $e^{3L^{\frac{1}{4}}}  m_{L}^{4} M_{0}(t)l^{2} \lambda_{q}^{9-3 \gamma_{2}} \delta_{q}^{2}$ using the fact that $2 - \gamma_{2} > \beta$ due to \eqref{est 315b}. Concerning the second term of $M_{0}(t)^{\frac{3}{4} - \delta} m_{L}^{\frac{9}{2} - 2 \delta} e^{(\frac{1}{2} - 2 \delta)L^{\frac{1}{4}}} l^{1-4\delta} \lambda_{q}^{\frac{9-3 \gamma_{2}}{2} + (\gamma_{2} -3) 2 \delta} \delta_{q}^{\frac{5}{4} - \delta}$ and the third term of $e^{3L^{\frac{1}{4}}} m_{L}^{4} M_{0}(t)l^{2} \lambda_{q}^{9-3 \gamma_{2}} \delta_{q}^{2}$ in \eqref{est 339} we realize that 
\begin{align*}
&M_{0}(t)^{\frac{3}{4}} m_{L}^{\frac{9}{2}} e^{\frac{1}{2}L^{\frac{1}{4}}}  l \lambda_{q}^{\frac{9-3\gamma_{2}}{2}} \delta_{q}^{\frac{5}{4}} \overset{\eqref{est 32} \eqref{est 35} \eqref{est 475}}{\lesssim} a^{L^{2}} a^{b^{q} [ -b \alpha + \frac{9-3\gamma_{2} - 5 \beta}{2} + b^{2} (-2 + \gamma_{2} + 2 \beta) ]}  \lambda_{q+2}^{2- \gamma_{2}} \delta_{q+2} \ll \lambda_{q+2}^{2-\gamma_{2}} \delta_{q+2}, \\
& e^{3L^{\frac{1}{4}}} m_{L}^{4} M_{0}(t)  l^{2} \lambda_{q}^{9- 3\gamma_{2}} \delta_{q}^{2} \overset{\eqref{est 32} \eqref{est 35} \eqref{est 475}}{\lesssim}  a^{L^{2}} a^{b^{q} [-2 \alpha b + 9 - 3 \gamma_{2} - 4 \beta + b^{2} (-2 + \gamma_{2} + 2 \beta) ]} \lambda_{q+2}^{2-\gamma_{2}} \delta_{q+2}  \ll \lambda_{q+2}^{2-\gamma_{2}} \delta_{q+2}
\end{align*}  
where the last inequalities are by 
\begin{align}\label{est 472}
b > \frac{1}{\alpha} \left[ L^{2} + \frac{9 - 3 \gamma_{2} - 5 \beta}{2}\right] \hspace{1mm}  \text{ and } \hspace{1mm} b > \frac{L^{2} + 9 - 3 \gamma_{2} - 4 \beta}{2\alpha}
\end{align}
due to \eqref{est 446}. Therefore, taking $\delta \in (0, \frac{1}{4})$ sufficiently small enough, $a \in 5 \mathbb{N}$ sufficiently large, and $\beta > 1 - \frac{\gamma_{2}}{2}$ sufficiently close to $1 -\frac{\gamma_{2}}{2}$ allows us to conclude from \eqref{est 339} that 
\begin{align}
\lVert R_{\text{Com1}} \rVert_{C_{t,x, q+1}}  \ll  M_{0}(t) \lambda_{q+2}^{2-\gamma_{2}} \delta_{q+2}. \label{est 398}
\end{align} 

\subsubsection{Bounds on $R_{\text{Com2}}$, the stochastic commutator error}\label{Section 5.2.6}
We continue from the decomposition of the stochastic commutator error $R_{\text{Com2}}$ to $\sum_{k=1}^{3} R_{\text{Com2,k}}$ in \eqref{est 389}-\eqref{est 390}.  We estimate via Young's inequality for convolution, for $a \in 5 \mathbb{N}$ sufficiently large and $\beta > 1- \frac{\gamma_{2}}{2}$ sufficiently close to $1 - \frac{\gamma_{2}}{2}$,  
\begin{align}
&\lVert R_{\text{Com2,1}} \rVert_{C_{t,x,q+1}} \overset{\eqref{est 332}}{\lesssim}  l^{\frac{1}{2} - 2 \delta} \lVert \Upsilon \rVert_{C_{t,q+1}^{\frac{1}{2} - 2 \delta}}  e^{L^{\frac{1}{4}}} \lVert \mathring{R}_{q} \rVert_{C_{t,x,q+1}} \label{est 393} \\
&\overset{\eqref{est 391a} \eqref{est 332} \eqref{est 32} \eqref{est 318}}{\lesssim} \lambda_{q+1}^{-\alpha (\frac{1}{2} - 2 \delta)}  e^{2L^{\frac{1}{4}}}M_{0}(t) \lambda_{q+1}^{2-\gamma_{2}} \delta_{q+1}  \nonumber  \\
& \hspace{10mm}  \lesssim   M_{0}(t) \lambda_{q+2}^{2-\gamma_{2}} \delta_{q+2} a^{b^{q} [b^{2} (-2 + \gamma_{2} + 2 \beta) + b( - \alpha (\frac{1}{2} - 2 \delta) + 2 - \gamma_{2} - 2 \beta ) ]} a^{L}  \ll M_{0}(t) \lambda_{q+2}^{2-\gamma_{2}} \delta_{q+2} \nonumber
\end{align}
where the last inequality relied on the fact that $- \frac{\alpha}{2} + 2 - \gamma_{2} - 2 \beta < -\frac{3}{4} + \frac{\gamma_{2}}{8} < 0$ due to \eqref{est 315b}-\eqref{est 315c} and 
\begin{equation}\label{est 473}
b> \frac{L}{\alpha ( \frac{1}{2} - 2 \delta) - 2 +\gamma_{2} + 2\beta} 
\end{equation} 
for all $\delta \in (0, \frac{1}{8})$ due to \eqref{est 446}. 

Next, for $a \in 5 \mathbb{N}$ sufficiently large 
\begin{align}\label{est 394}   
\lVert R_{\text{Com2,2}} \rVert_{C_{t,x,q+1}} \overset{\eqref{est 391b}}{=}& \lVert (\Upsilon - \Upsilon_{l}) \Upsilon_{l}^{-1} R_{0} \rVert_{C_{t,x,q+1}}   \nonumber\\
\overset{\eqref{est 332}}{\leq}& 2e^{2L^{\frac{1}{4}}} \lVert R_{O} \rVert_{C_{t,x,q+1}} \overset{\eqref{est 307}}{\ll}   M_{0}(t) \lambda_{q+2}^{2-\gamma_{2}} \delta_{q+2}
\end{align} 
where we used $e^{2L^{\frac{1}{4}}} \lVert R_{O} \rVert_{C_{t,x, q+1}} \ll  M_{0}(t)\lambda_{q+2}^{2-\gamma_{2}} \delta_{q+2}$ in \eqref{est 307}. 

Finally, concerning $R_{\text{Com2,3}}$ of \eqref{est 391c}, similarly to how we handled the estimates of $R_{T}$, we notice that we may apply $\tilde{P}_{\approx \lambda_{q+1}}$ to the products of $w_{q+1}$ and $y_{l}$ considering their frequency support from \eqref{est 163} and Hypothesis \ref{Hypothesis 5.1}(a), and estimate for $a \in 5 \mathbb{N}$ sufficiently large and $\beta > 1 - \frac{\gamma_{2}}{2}$ sufficiently close to $1 -\frac{\gamma_{2}}{2}$, 
\begin{align}  
& \lVert R_{\text{Com2,3}} \rVert_{C_{t,x,q+1}} \label{est 395} \\
&\leq \lVert  \Upsilon - \Upsilon_{l} \rVert_{C_{t,q+1}}  \nonumber\\
& \hspace{5mm} \times \lVert  \mathcal{B} \tilde{P}_{\approx \lambda_{q+1}} [\Lambda^{2-\gamma_{2}} w_{q+1}^{\bot} \nabla^{\bot} \cdot y_{l} + \Lambda^{2-\gamma_{2}} y_{l}^{\bot} \nabla^{\bot} \cdot w_{q+1}]  + \mathcal{B} (\Lambda^{2-\gamma_{2}} y_{l}^{\bot} \nabla^{\bot} \cdot y_{l})  \rVert_{C_{t,x,q+1}} \nonumber\\ 
&\overset{\eqref{est 332} \eqref{est 163} }{\lesssim} l^{\frac{1}{2} - 2\delta} m_{L}^{2} [ \lambda_{q+1}^{-1} \lambda_{q+1}^{2-\gamma_{2}} \lVert w_{q+1} \rVert_{C_{t,x,q+1}} \lambda_{q}^{\gamma_{2} -1} \lVert \Lambda^{1-\gamma_{2}} \nabla^{\bot} y_{q} \rVert_{C_{t,x,q+1}} \nonumber \\
& \hspace{10mm} + \lambda_{q+1}^{-1} \lVert \Lambda^{2- \gamma_{2}} y_{q} \rVert_{C_{t,x,q+1}} \lambda_{q+1} \lVert w_{q+1} \rVert_{C_{t,x,q+1}} + \lVert \Lambda^{2-\gamma_{2}} y_{q} \rVert_{C_{t,x,q+1}}^{2} \lambda_{q}^{\gamma_{2} -1} ]  \nonumber \\ 
&\overset{\eqref{est 350a}  \eqref{est 317}}{\lesssim}  \lambda_{q+1}^{-\alpha ( \frac{1}{2} - 2 \delta)} m_{L}^{2} M_{0}(t) [ \delta_{q+1}^{\frac{1}{2}} \lambda_{q}^{2-\gamma_{2}} \delta_{q}^{\frac{1}{2}} m_{L} e^{\frac{1}{2} L^{\frac{1}{4}}} + \lambda_{q}^{3-\gamma_{2}} \delta_{q} m_{L}^{2}]  \nonumber \\
&\lesssim   M_{0}(t)  \lambda_{q+2}^{2-\gamma_{2}} \delta_{q+2}  [a^{b^{q} [ b^{2} (-2 + \gamma_{2} + 2 \beta) - b [\alpha (\frac{1}{2} - 2 \delta) + \beta] + 2 - \gamma_{2} - \beta]} + a^{b^{q} [b^{2} (-2 + \gamma_{2} + 2 \beta) - b \alpha( \frac{1}{2} - 2 \delta) + 3 - \gamma_{2} - 2 \beta]} ]]a^{L} \nonumber \\
& \ll  M_{0}(t) \lambda_{q+2}^{2-\gamma_{2}} \delta_{q+2} \nonumber 
\end{align}
where the last inequality relied on the fact that 
\begin{equation}\label{est 474}
b > \frac{2- \gamma_{2} - \beta + L}{\alpha(\frac{1}{2} - 2 \delta) + \beta} \hspace{1mm} \text{ and } \hspace{1mm} b > \frac{3 - \gamma_{2} - 2 \beta + L}{\alpha (\frac{1}{2} - 2 \delta)}
\end{equation} 
for all $\delta \in (0, \frac{1}{8})$ due to \eqref{est 446}.  Applying \eqref{est 393}, \eqref{est 394}, and \eqref{est 395} to \eqref{est 390} finally allows us to conclude
\begin{equation}\label{est 396}
\lVert R_{\text{Com2}} \rVert_{C_{t,x,q+1}} \ll  M_{0}(t) \lambda_{q+2}^{2-\gamma_{2}} \delta_{q+2}.
\end{equation} 

In conclusion, applying \eqref{est 397}, \eqref{est 376}, \eqref{est 381}, \eqref{est 307}, \eqref{est 398}, and \eqref{est 396} to \eqref{est 190} gives us 
\begin{equation*}
\lVert \mathring{R}_{q+1} \rVert_{C_{t,x,q+1}} =\lVert R_{T} + R_{N} + R_{L} + R_{O} + R_{\text{Com1}} + R_{\text{Com2}} \rVert_{C_{t,x,q+1}} \ll M_{0} \lambda_{q+2}^{2-\gamma_{2}} \delta_{q+2}. 
\end{equation*} 
Finally, the proof that $(y_{q+1}, \mathring{R}_{q+1})$ are $(\mathcal{F}_{t})_{t\geq 0}$-adapted and that $y_{q+1}(t,x)$ and $\mathring{R}_{q+1}(t,x)$ are deterministic over $[t_{q+1}, 0]$ can be verified very similarly to \cite{Y23a} and thus omitted. 

\section{Appendix}
\subsection{Further Preliminaries}
The following inverse divergence operator $\mathcal{B}$ was used frequently:
\begin{lemma}\label{Inverse-divergence lemma} 
\rm{(Inverse divergence from \cite[Definition 4.1]{BSV19})} Let $f$ be divergence-free and mean-zero. Then we define the $(i,j)$-th entry of $\mathcal{B}f$ for $i, j \in \{1,2\}$ by 
\begin{equation*}
(\mathcal{B}f)_{ij} \triangleq - \partial_{j} \Lambda^{-2} f_{i} - \partial_{i}\Lambda^{-2} f_{j}.
\end{equation*} 
For $f$ that is not divergence-free, we define $\mathcal{B} f \triangleq \mathcal{B} \mathbb{P} f$; for $f$ that is not mean-zero, we define $\mathcal{B} f \triangleq \mathcal{B} \left( f - \fint_{\mathbb{T}^{2}} f dx \right)$. It follows that $\divergence (\mathcal{B} f) = \mathbb{P} f$, and $\mathcal{B} f$ is a symmetric and trace-free matrix. One property that we will frequently rely on in estimates is $\lVert \mathcal{B} \rVert_{C_{x} \mapsto C_{x}} \lesssim 1$ (e.g. \cite[p. 127]{BV19a} and \cite[Lemma 7.3]{LQ20}). 
\end{lemma}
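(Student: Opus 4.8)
The plan is to reduce all four assertions — symmetry of $\mathcal{B}f$, trace-freeness, the identity $\divergence(\mathcal{B}f) = \mathbb{P}f$, and the bound $\lVert\mathcal{B}\rVert_{C_{x}\mapsto C_{x}} \lesssim 1$ — to the base case in which $f$ is divergence-free and mean-zero, where the explicit formula $(\mathcal{B}f)_{ij} = -\partial_{j}\Lambda^{-2}f_{i} - \partial_{i}\Lambda^{-2}f_{j}$ is in force, and then to observe that the stated extensions $\mathcal{B}f \triangleq \mathcal{B}\mathbb{P}f$ and $\mathcal{B}f \triangleq \mathcal{B}(f - \fint_{\mathbb{T}^{2}}f\,dx)$ feed this formula only inputs that are divergence-free and, since $\Lambda^{-2}$ annihilates the zeroth Fourier mode under the standard convention, effectively mean-zero.

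First I would dispatch the two algebraic claims. Symmetry is immediate because the right-hand side of the defining formula is manifestly invariant under $i \leftrightarrow j$, and this is inherited by the extensions since they apply the same formula to a modified argument. For trace-freeness I would compute $\Tr(\mathcal{B}f) = \sum_{i=1}^{2}(\mathcal{B}f)_{ii} = -2\sum_{i=1}^{2}\partial_{i}\Lambda^{-2}f_{i} = -2\Lambda^{-2}(\divergence f)$, which vanishes when $f$ is divergence-free, and in general one applies this to $\mathbb{P}f$, which is divergence-free by construction.

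Next I would establish $\divergence(\mathcal{B}f) = \mathbb{P}f$ by a short Fourier-side computation in the base case: using $\Lambda^{2} = -\Delta$ and $\sum_{j}\partial_{j}\partial_{j} = \Delta$,
\begin{equation*}
(\divergence\mathcal{B}f)_{i} = \sum_{j=1}^{2}\partial_{j}(\mathcal{B}f)_{ij} = (-\Delta)\Lambda^{-2}f_{i} - \partial_{i}\Lambda^{-2}(\divergence f) = f_{i},
\end{equation*}
since $(-\Delta)\Lambda^{-2}$ is the identity on mean-zero functions and the last term vanishes; as $\mathbb{P}f = f$ for divergence-free $f$, this is the asserted identity in the base case. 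For general $f$ the input $\mathbb{P}f$ to $\mathcal{B}$ is divergence-free, so $\divergence(\mathcal{B}f) = \divergence(\mathcal{B}\mathbb{P}f) = \mathbb{P}f$ follows at once, and symmetry and trace-freeness transfer the same way.

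The remaining point, and the one I expect to require the most care, is the $C_{x}$-boundedness. I would argue that $\mathcal{B}\circ\mathbb{P}$ is a matrix-valued Fourier multiplier on $\mathbb{T}^{2}$ whose symbol is smooth away from the origin and positively homogeneous of degree $-1$ there, hence it acts by convolution against a kernel that is smooth on $\mathbb{T}^{2}\setminus\{0\}$ and has at worst an $\lvert x\rvert^{-1}$ singularity at $0$, which is integrable in two dimensions; Young's inequality then gives boundedness on $L^{\infty}\supset C_{x}$, and convolution against an $L^{1}$ kernel sends continuous functions to continuous functions by uniform continuity of the input together with dominated convergence, so $\mathcal{B}:C_{x}\mapsto C_{x}$ with norm controlled by the $L^{1}$-norm of the kernel; alternatively one may simply invoke \cite[p. 127]{BV19a} or \cite[Lemma 7.3]{LQ20}. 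The subtlety to watch is that the order-$0$ factor $\mathcal{R}\otimes\mathcal{R}$ inside $\mathbb{P}$ has a Calder\'on--Zygmund kernel that is only borderline non-integrable on its own; composing it with the order-$(-1)$ operator $\mathcal{B}$ strictly lowers the order, so the composite kernel is genuinely in $L^{1}(\mathbb{T}^{2})$ and no principal value enters, which makes the rest of the estimate routine.
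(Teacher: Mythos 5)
Your proposal is correct. The paper does not prove this lemma at all: it is recalled verbatim from \cite[Definition 4.1]{BSV19}, with the $C_{x}\mapsto C_{x}$ bound delegated to \cite[p.~127]{BV19a} and \cite[Lemma 7.3]{LQ20}. Your write-up supplies exactly the standard verification those references contain: the symmetry and trace computations are immediate from the formula, the Fourier-side identity $-\Delta\Lambda^{-2}=\Id$ on mean-zero functions gives $\divergence(\mathcal{B}f)=\mathbb{P}f$, and the $C_{x}$ bound follows because the composite symbol of $\mathcal{B}\circ\mathbb{P}$ is homogeneous of degree $-1$, so its periodized kernel has only an integrable $\lvert x\rvert^{-1}$ singularity in two dimensions and Young's inequality applies. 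Your remark that the Calder\'on--Zygmund factor $\mathcal{R}\otimes\mathcal{R}$ alone would not be bounded on $L^{\infty}$, but is harmless once composed with the order-$(-1)$ operator, is precisely the point that makes the citation-free argument go through.
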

For $k \in \mathbb{S}^{1}$ we define
\begin{equation}\label{est 178} 
b_{k}(\xi) \triangleq i k^{\bot} e^{ik\cdot \xi} \hspace{1mm} \text{ and } \hspace{1mm} c_{k}(\xi) \triangleq e^{ik\cdot \xi}
\end{equation} 
which satisfies 
\begin{equation*}
b_{k} = \nabla_{\xi}^{\bot} c_{k}, \hspace{1mm} c_{k} = - \nabla_{\xi}^{\bot} \cdot b_{k}, \hspace{1mm} \Lambda_{\xi}^{\alpha} b_{k}(\xi) = b_{k}(\xi) \hspace{1mm} \forall \hspace{1mm} \alpha \geq 0, \hspace{1mm} b_{k}(\xi)^{\bot} = - \nabla_{\xi} c_{k}(\xi).
\end{equation*} 
For any finite family of vectors $\Gamma \subset \mathbb{S}^{1}$ and $\{ a_{k}  \} \subset \mathbb{C}$ such that $a_{-k} = \bar{a}_{k}$, we set $W(\xi) \triangleq \sum_{k \in \Gamma} a_{k} b_{k} (\xi)$ and $W_{k}(\xi) = a_{k} b_{k}(\xi)$ and we have 
\begin{equation}\label{est 0}
\divergence (W \otimes W) = \frac{1}{2} \nabla_{\xi} \lvert W \rvert^{2} + ( \nabla_{\xi}^{\bot} \cdot W) W^{\bot}, \hspace{1mm} \sum_{k \in \Gamma} W_{k} \otimes W_{-k} = \sum_{k\in \Gamma} \lvert a_{k} \rvert^{2} k^{\bot} \otimes k^{\bot}. 
\end{equation}

\begin{lemma}\label{Geometric Lemma} 
\rm{(Geometric lemma from \cite[Lemma 4.2]{BSV19})} Let $B(\Id, \epsilon)$ denote the ball of symmetric $2\times 2$ matrices, centered at $\Id$ of radius $\epsilon > 0$. Then there exists $\epsilon_{\gamma} > 0$ with which there exist disjoint finite subsets $\Gamma_{j} \subset \mathbb{S}^{1}$ for $j \in \{1,2\}$ and smooth positive functions $\gamma_{k} \in C^{\infty} (B(\Id, \epsilon_{\gamma})), j \in \{1,2\}, k \in \Gamma_{j}$ such that 
\begin{enumerate}
\item $5 \Gamma_{j} \subset \mathbb{Z}^{2}$ for each $j$, 
\item if $k \in \Gamma_{j}$, then $- k \in \Gamma_{j}$ and $\gamma_{k} =  \gamma_{-k}$, 
\item 
\begin{equation}\label{est 265}
R = \frac{1}{2} \sum_{k \in \Gamma_{j}} (\gamma_{k}(R))^{2} (k^{\bot} \otimes k^{\bot})  \hspace{3mm} \forall \hspace{1mm} R \in B(\Id, \epsilon_{\gamma}) 
\end{equation} 
\item $\lvert k + k' \rvert \geq \frac{1}{2}$ for all $k, k' \in \Gamma_{j}$ such that $k + k' \neq 0$. 
\end{enumerate} 
\end{lemma}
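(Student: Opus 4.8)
The plan is to separate the argument into a finite combinatorial part (choosing the index sets $\Gamma_1,\Gamma_2$) and a soft analytic part (local inversion of the quadratic map $R\mapsto\frac12\sum_k\gamma_k^2\,k^\bot\otimes k^\bot$ near $R=\Id$).

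First I would determine the admissible directions. The constraints $5\Gamma_j\subset\mathbb{Z}^2$ and $\Gamma_j\subset\mathbb{S}^1$ force $\Gamma_j$ into the finite set $S\triangleq\mathbb{S}^1\cap\tfrac15\mathbb{Z}^2=\{(\pm1,0),(0,\pm1),(\pm\tfrac35,\pm\tfrac45),(\pm\tfrac45,\pm\tfrac35)\}$, the twelve rational points of the unit circle (the axis directions together with the normalized Pythagorean triples of hypotenuse $5$). Since $S$ is finite, $\lvert k+k'\rvert$ is automatically bounded below over all pairs with $k+k'\neq0$, so the content of condition (4) is only that the few near-antipodal pairs in $S$ — e.g.\ $(\tfrac35,\tfrac45)$ and $(-\tfrac45,-\tfrac35)$, whose sum has norm $\tfrac{\sqrt2}{5}<\tfrac12$ — are kept in different families. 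I would take
\[
\Gamma_1\triangleq\{(\pm1,0),(\pm\tfrac35,\pm\tfrac45)\},\qquad\Gamma_2\triangleq\{(0,\pm1),(\pm\tfrac45,\pm\tfrac35)\},
\]
which are disjoint, each closed under $k\mapsto-k$, each containing three distinct directions up to sign, and each satisfying $\lvert k+k'\rvert\ge\tfrac12$ whenever $k,k'\in\Gamma_j$ and $k+k'\neq0$ (a check of finitely many sums). This gives properties (1), (2) and (4); since $\Gamma_2$ is the image of $\Gamma_1$ under the coordinate swap — an orthogonal map fixing $\Id$ — it suffices to treat $\Gamma_1$ below.

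Next I would build $\gamma_k$ and $\epsilon_{\gamma}$. Let $\tilde\Gamma_1\subset\Gamma_1$ contain one representative of each of the three lines; since $k^\bot\otimes k^\bot$ depends only on $\mathbb{R}k$, a direct computation shows that $\{k^\bot\otimes k^\bot:k\in\tilde\Gamma_1\}$ is a basis of the (three-dimensional) space of symmetric $2\times2$ matrices and that $\Id=\sum_{k\in\tilde\Gamma_1}(c_k^0)^2\,k^\bot\otimes k^\bot$ for suitable $c_k^0>0$. Consider the smooth quadratic map $\mathcal M:(0,\infty)^3\to\{\text{symmetric }2\times2\text{ matrices}\}$, $\mathcal M((t_k))\triangleq\sum_{k\in\tilde\Gamma_1}t_k^2\,k^\bot\otimes k^\bot$; its differential at $(c_k^0)$ sends each standard basis vector to a nonzero multiple of $k^\bot\otimes k^\bot$, hence is invertible. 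By the inverse function theorem $\mathcal M$ is a diffeomorphism of a neighbourhood of $(c_k^0)$ onto a neighbourhood $U_1$ of $\Id$, whose inverse yields smooth functions $R\mapsto\gamma_k(R)$ on $U_1$, $k\in\tilde\Gamma_1$, with $\gamma_k(\Id)=c_k^0$ and $\sum_{k\in\tilde\Gamma_1}\gamma_k(R)^2\,k^\bot\otimes k^\bot=R$; extending by $\gamma_{-k}\triangleq\gamma_k$ and using $\tfrac12\sum_{k\in\Gamma_1}=\sum_{k\in\tilde\Gamma_1}$ gives exactly \eqref{est 265}. Doing the same for $\Gamma_2$ gives $U_2$. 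Finally I would pick $\epsilon_{\gamma}>0$ with $B(\Id,\epsilon_{\gamma})\subset U_1\cap U_2$ and, shrinking further by continuity from $\gamma_k(\Id)=c_k^0>0$, with every $\gamma_k$ strictly positive on $B(\Id,\epsilon_{\gamma})$.

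The only genuinely delicate step is the first one — producing $\Gamma_1,\Gamma_2$ that simultaneously satisfy the arithmetic condition (1) and the separation condition (4) while each still spans the symmetric matrices, which is exactly why two families are needed and is resolved by the explicit split above. Everything else is the standard submersion/inverse-function argument; as the lemma is quoted from \cite[Lemma 4.2]{BSV19}, one may alternatively simply cite that proof.
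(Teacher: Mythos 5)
Your proposal is correct and complete. Note that the paper itself offers no proof of this lemma: it is quoted verbatim from \cite[Lemma 4.2]{BSV19} and used as a black box, so there is nothing in the manuscript to compare against line by line. What you have written is essentially a faithful reconstruction of the standard argument behind such geometric lemmas (and, in substance, of the proof in \cite{BSV19}): identify the twelve rational directions in $\mathbb{S}^{1} \cap \tfrac{1}{5}\mathbb{Z}^{2}$, split them into two families so that condition (4) holds within each family while the problematic near-antipodal pairs such as $(\tfrac35,\tfrac45)$ and $(-\tfrac45,-\tfrac35)$ land in different families, check that the three lines in each family give matrices $k^{\bot}\otimes k^{\bot}$ spanning the three-dimensional space of symmetric $2\times2$ matrices with $\Id$ a strictly positive combination (your coefficients $\tfrac{7}{16}$ and $\tfrac{25}{32}$ for $\Gamma_{1}$ check out), and then invoke the inverse function theorem to invert $R\mapsto\sum t_{k}^{2}\,k^{\bot}\otimes k^{\bot}$ smoothly near $\Id$, shrinking $\epsilon_{\gamma}$ to keep the $\gamma_{k}$ positive and to serve both families simultaneously. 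The extension $\gamma_{-k}\triangleq\gamma_{k}$ together with $(-k)^{\bot}\otimes(-k)^{\bot}=k^{\bot}\otimes k^{\bot}$ correctly converts $\sum_{k\in\tilde\Gamma_{j}}$ into $\tfrac12\sum_{k\in\Gamma_{j}}$, giving \eqref{est 265}. The only gain of your write-up over the paper's citation is self-containedness; the only thing the citation buys is brevity.
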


\begin{lemma}
\rm{(\cite[Lemma A.2]{BSV19}, \cite[Proposition D.1]{BDIS15})} 
Within this lemma, we denote $D_{t}\triangleq \partial_{t} + u \cdot \nabla$. Given a smooth vector field $u(t,x)$, as well as $f_{0}$ and $g$, consider a smooth solution $f$ to
\begin{align*}
\partial_{t} f + (u\cdot\nabla) f = g, \hspace{5mm}  f(t_{0}, x) = f_{0}(x), 
\end{align*}
where $u(t,x)$ is a given smooth vector field. Let $\Phi$ be the inverse of the flux $X$ of $u$ starting at time $t_{0}$ as the identity
\begin{align*}
\partial_{t} X = u(X, t),  \hspace{5mm} X(t_{0}, x) = x, 
\end{align*}
so that $f(t,x) = f_{0} (\Phi(t,x))$ in case $g \equiv 0$. It follows that for $t > t_{0}$,
\begin{subequations}\label{est 197}
\begin{align}
& \lVert f (t) \rVert_{C_{x}} \leq \lVert f_{0} \rVert_{C_{x}} + \int_{t_{0}}^{t} \lVert g(\tau) \rVert_{C_{x}} d\tau, \label{est 197a} \\
& \lVert Df (t) \rVert_{C_{x}} \leq \lVert D f_{0} \rVert_{C_{x}} e^{( t - t_{0}) \lVert Du \rVert_{C_{t, x}}} + \int_{t_{0}}^{t} e^{(t- \tau)  \lVert D u \rVert_{C_{t, x}}} \lVert Dg(\tau) \rVert_{C_{x}} d\tau; \label{est 197b} 
\end{align}
\end{subequations}  
more generally, for any $N \in \mathbb{N} \setminus \{1\}$, there exists a constant $C = C(N)$ such that 
\begin{align}
\lVert D^{N} f(t) \rVert_{C_{x}}  \leq& ( \lVert D^{N} f_{0} \rVert_{C_{x}} + C(t-t_{0})  \lVert D^{N} u \rVert_{C_{t,x}} \lVert Df_{0} \rVert_{C_{x}} ) e^{C(t-t_{0})  \lVert D u \rVert_{C_{t,x}}} \nonumber\\
&+ \int_{t_{0}}^{t} e^{C(t-\tau)  \lVert Du \rVert_{C_{t,x}}} (  \lVert D^{N} g(\tau) \rVert_{C_{x}} + C(t-\tau)  \lVert D^{N} u \rVert_{C_{t,x}} \lVert Dg(\tau) \rVert_{C_{x}} )d\tau.  \label{est 198} 
\end{align} 
Additionally, 
\begin{subequations}\label{est 199}
\begin{align}
&\lVert D\Phi(t) - \Id \rVert_{C_{x}} \leq e^{(t-t_{0}) \lVert Du \rVert_{C_{t,x}}} - 1 \leq (t- t_{0}) \lVert Du \rVert_{C_{t,x}} e^{(t- t_{0}) \lVert Du \rVert_{C_{t,x}}}, \label{est 199a}\\
& \lVert D^{N} \Phi(t) \rVert_{C_{x}} \leq C(t-t_{0})  \lVert D^{N} u \rVert_{C_{t,x}} e^{C(t-t_{0})  \lVert Du \rVert_{C_{t,x}}} \hspace{3mm} \forall \hspace{1mm} N \in \mathbb{N}\setminus \{1\}. \label{est 199b}
\end{align}
\end{subequations}
\end{lemma}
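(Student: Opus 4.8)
The plan is to base the whole lemma on the method of characteristics for the transport equation, to read off the derivative bounds by differentiating the equation and applying Gr\"onwall's inequality in its variation-of-parameters form, and finally to obtain the statements about $\Phi$ by specializing these bounds to the transport equation that $\Phi$ itself solves.

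First I would note that, since $u$ is smooth and $\mathbb{T}^{2}$ is compact, the flow $X$ is globally defined and $X(t,\cdot)\colon\mathbb{T}^{2}\to\mathbb{T}^{2}$ is for each $t$ a diffeomorphism with inverse $\Phi(t,\cdot)$. Along a characteristic one has $\frac{d}{d\tau}f(\tau,X(\tau,y))=g(\tau,X(\tau,y))$, hence $f(t,X(t,y))=f_{0}(y)+\int_{t_{0}}^{t}g(\tau,X(\tau,y))\,d\tau$; taking the supremum over $y$, equivalently over $x=X(t,y)$ since $X(t,\cdot)$ is a bijection, yields \eqref{est 197a}. For \eqref{est 197b} I would differentiate the equation: $Df$ solves $\partial_{t}(Df)+(u\cdot\nabla)(Df)=Dg-(Df)(Du)$ with datum $Df_{0}$, so applying \eqref{est 197a} componentwise gives $\lVert Df(t)\rVert_{C_{x}}\le\lVert Df_{0}\rVert_{C_{x}}+\int_{t_{0}}^{t}\bigl(\lVert Dg(\tau)\rVert_{C_{x}}+\lVert Du\rVert_{C_{t,x}}\lVert Df(\tau)\rVert_{C_{x}}\bigr)\,d\tau$, and the variation-of-parameters Gr\"onwall lemma with rate $\lVert Du\rVert_{C_{t,x}}$ produces exactly \eqref{est 197b}.

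For \eqref{est 198} with $N\ge 2$ I would iterate: $D^{N}f$ solves $\partial_{t}(D^{N}f)+(u\cdot\nabla)(D^{N}f)=D^{N}g-[D^{N},u\cdot\nabla]f$, and by the Leibniz rule the commutator is a finite sum of terms $D^{j}u\cdot D^{N+1-j}f$ with $1\le j\le N$. The term $j=1$ is $(D^{N}f)(Du)$ and, after \eqref{est 197a}, contributes the Gr\"onwall exponential $e^{C(t-t_{0})\lVert Du\rVert_{C_{t,x}}}$; the term $j=N$ is $D^{N}u\cdot Df$, which together with the datum $D^{N}f_{0}$ and source $D^{N}g$ accounts for the remaining quantities on the right of \eqref{est 198}; and the intermediate terms $2\le j\le N-1$ are handled by induction on $N$ together with the Gagliardo--Nirenberg interpolation inequalities, which bound $\lVert D^{j}f\rVert_{C_{x}}$ ($2\le j\le N-1$) by products of powers of $\lVert Df\rVert_{C_{x}}$ and $\lVert D^{N}f\rVert_{C_{x}}$ (and likewise for $u$), so that Young's inequality absorbs the top-order factor into the Gr\"onwall term; a final application of Gr\"onwall then gives \eqref{est 198}. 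The estimates for $\Phi$ follow by specialization: differentiating $\Phi(t,X(t,y))=y$ in $t$ shows $\Phi$ solves the transport equation with $g\equiv 0$ and datum the identity map, so $h\triangleq D\Phi-\Id$ solves $\partial_{t}h+(u\cdot\nabla)h=-(\Id+h)(Du)$ with $h(t_{0})=0$; then \eqref{est 197a} gives $\lVert h(t)\rVert_{C_{x}}\le\int_{t_{0}}^{t}\lVert Du\rVert_{C_{t,x}}(1+\lVert h(\tau)\rVert_{C_{x}})\,d\tau$, Gr\"onwall yields $\lVert h(t)\rVert_{C_{x}}\le e^{(t-t_{0})\lVert Du\rVert_{C_{t,x}}}-1$, and the elementary bound $e^{s}-1\le s e^{s}$ for $s\ge 0$ gives \eqref{est 199a}; applying \eqref{est 198} to $f=\Phi$, for which $D^{N}f_{0}=0$ when $N\ge 2$ and $\lVert Df_{0}\rVert_{C_{x}}\lesssim 1$, gives \eqref{est 199b}.

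The hard part will be \eqref{est 198}: keeping its right-hand side to only $N$-th derivatives of $f_{0}$, $g$, $u$ plus a single extra derivative of $f_{0}$ and $g$, and crucially \emph{no} extra derivative of $u$, requires the careful commutator bookkeeping and the Gagliardo--Nirenberg redistribution of derivatives sketched above, rather than a naive repeated use of \eqref{est 197b}, which would generate spurious products of lower-order norms and too many derivatives falling on $u$.
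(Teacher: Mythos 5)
Your proposal is correct and is essentially the standard argument: the paper quotes this lemma without proof from \cite[Lemma A.2]{BSV19} and \cite[Proposition D.1]{BDIS15}, and your characteristics-plus-Gr\"onwall scheme, with the Leibniz/commutator expansion and the Gagliardo--Nirenberg--Young redistribution of the intermediate terms $D^{j}u\cdot D^{N+1-j}f$ for $2\le j\le N-1$, is precisely the proof given in those references. The specialization to $\Phi$ via $D_{t}\Phi=0$ and $h=D\Phi-\Id$ is also the standard route and is carried out correctly.
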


\begin{proposition}\label{Calderon commutator} 
\rm{(Calder$\acute{\mathrm{o}}$n commutator from \cite[Lemma A.5]{BSV19}; cf. \cite[Proposition A.3]{Y23a}, \cite[Lemma 2.2]{M08}, and \cite[Theorem 10.3]{L02})} Let $p \in (1,\infty)$. If $\phi \in W^{1,\infty}(\mathbb{T}^{2})$, then for any $f \in L^{p}(\mathbb{T}^{2})$ that is mean-zero, 
\begin{equation*}
\lVert [\Lambda, \phi] f \rVert_{L_{x}^{p}} \lesssim_{p} \lVert \phi \rVert_{W_{x}^{1,\infty}} \lVert f \rVert_{L_{x}^{p}}.
\end{equation*} 
Moreover, if $s \in [0,1], \phi \in W^{2,\infty}(\mathbb{T}^{2})$, then for any $f \in H^{s}(\mathbb{T}^{2})$ that is mean-zero, 
\begin{equation}\label{est 50}
\lVert [\Lambda, \phi] f \rVert_{H_{x}^{s}} \lesssim_{s} \lVert \phi \rVert_{W_{x}^{2,\infty}} \lVert f \rVert_{H_{x}^{s}}. 
\end{equation} 
\end{proposition}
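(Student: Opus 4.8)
The plan is to prove the two estimates separately, deducing the $H^s$ bound from the $L^p$ bound via a commutator identity and interpolation. Since $\Lambda=(-\Delta)^{1/2}$ acts on $\mathbb{T}^2$-periodic functions through the Fourier multiplier $|k|$, I first transfer to $\mathbb{R}^2$: acting on functions of zero mean, $\Lambda$ is convolution against the periodization $K_{\mathbb{T}}$ of the kernel $K(z)=c_2\,\mathrm{p.v.}\,|z|^{-3}$ of $(-\Delta)^{1/2}$ on $\mathbb{R}^2$, and $K_{\mathbb{T}}-K$ extends smoothly across the origin. Hence $[\Lambda,\phi]f(x)=\mathrm{p.v.}\int_{\mathbb{T}^2}(\phi(x)-\phi(y))K_{\mathbb{T}}(x-y)f(y)\,dy$ splits into a principal part carrying the Euclidean Calder\'{o}n kernel $(\phi(x)-\phi(y))K(x-y)$ and a remainder whose kernel is pointwise $\lesssim\lVert\nabla\phi\rVert_{L^\infty_x}$; the remainder, an integral operator against a bounded kernel on the compact torus, is bounded on every $L^p(\mathbb{T}^2)$ with the stated constant by a Schur test, so it suffices to control the principal part on $\mathbb{R}^2$.

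For the principal part, write $\phi(x)-\phi(y)=(x-y)\cdot\int_0^1\nabla\phi(y+t(x-y))\,dt$, so that the operator equals $\int_0^1 T_t\,dt$ with $T_t$ having kernel $c_2\,|x-y|^{-3}(x-y)\cdot\nabla\phi(y+t(x-y))$. Each $T_t$ is of first Calder\'{o}n commutator type: its kernel has size $\lesssim\lVert\nabla\phi\rVert_{L^\infty_x}|x-y|^{-2}$ and carries the odd cancellation of $z/|z|^3$, while $\nabla\phi$ is merely bounded. The $L^p$-boundedness of $\int_0^1 T_t\,dt$ with operator norm $\lesssim_p\lVert\nabla\phi\rVert_{L^\infty_x}$ is the classical first Calder\'{o}n commutator estimate; it follows either from Calder\'{o}n's rotation method, reducing to the uniform $L^p$-boundedness of directional Hilbert and Riesz transforms, or from viewing $[\Lambda,\phi]$ as the bilinear Fourier multiplier operator with symbol $|\xi|-|\xi-\eta|$, which after extracting a factor $|\eta|$ is of Coifman--Meyer type. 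This is precisely \cite[Lemma A.5]{BSV19} (cf. \cite[Lemma 2.2]{M08} and \cite[Theorem 10.3]{L02}), whose argument may be quoted.

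For the second estimate I interpolate between $s=0$ and $s=1$. The case $s=0$ is the $p=2$ instance of the first estimate, using $W^{2,\infty}\subset W^{1,\infty}$. For $s=1$, the identity $\partial_k\Lambda=\Lambda\partial_k$ gives $\partial_k[\Lambda,\phi]f=[\Lambda,\phi]\partial_k f+[\Lambda,\partial_k\phi]f$; then $\lVert[\Lambda,\phi]\partial_k f\rVert_{L^2_x}\lesssim\lVert\nabla\phi\rVert_{L^\infty_x}\lVert\partial_k f\rVert_{L^2_x}$ and $\lVert[\Lambda,\partial_k\phi]f\rVert_{L^2_x}\lesssim\lVert\nabla\partial_k\phi\rVert_{L^\infty_x}\lVert f\rVert_{L^2_x}$ by the $p=2$ estimate applied to $\partial_k f$ and to the Lipschitz function $\partial_k\phi$; together with the $s=0$ bound on $\lVert[\Lambda,\phi]f\rVert_{L^2_x}$ this yields $\lVert[\Lambda,\phi]f\rVert_{H^1_x}\lesssim\lVert\phi\rVert_{W^{2,\infty}_x}\lVert f\rVert_{H^1_x}$. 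Interpolating the bounds $H^0_x\to H^0_x$ and $H^1_x\to H^1_x$ gives the claim for $s\in[0,1]$.

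The main obstacle is the first estimate: since $\phi$ is only Lipschitz, the Calder\'{o}n kernel $(\phi(x)-\phi(y))K(x-y)$ fails the H\"{o}rmander regularity condition, so elementary Calder\'{o}n--Zygmund theory does not apply and one must invoke the finer Calder\'{o}n commutator / Coifman--Meyer machinery. The remaining ingredients, namely the periodization remainder, the differentiation identity, and the interpolation, are routine.
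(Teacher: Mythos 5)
The paper does not actually prove this proposition: it is stated in the appendix purely as a quoted result, with the $L^p$ bound attributed to \cite[Lemma A.5]{BSV19} and the $H^s$ bound to the circle of references \cite{M08, L02, Y23a}. Your proposal is therefore necessarily a different (because nonexistent in the paper) route, and it is a sound one. The reduction of the periodic operator to the Euclidean first Calder\'{o}n commutator via periodization of the kernel of $(-\Delta)^{1/2}$ is correct (the discrepancy between the torus multiplier $\lvert k \rvert$ and the periodized Euclidean kernel is a smooth, bounded kernel on the fundamental cell, and any additive constant in the transference is killed by the commutator), and your derivation of \eqref{est 50} from the $p=2$ case is genuinely complete: the identity $\partial_{k}[\Lambda,\phi]f = [\Lambda,\phi]\partial_{k}f + [\Lambda,\partial_{k}\phi]f$ is valid since $\partial_{k}$ and $\Lambda$ are commuting Fourier multipliers, $\partial_{k}f$ is automatically mean-zero on $\mathbb{T}^{2}$ so the $L^{2}$ estimate applies to it, the hypothesis $\phi \in W^{2,\infty}$ is exactly what makes $\partial_{k}\phi$ Lipschitz, and interpolating the $L^{2}\to L^{2}$ and $H^{1}\to H^{1}$ bounds gives $H^{s}\to H^{s}$ for $s\in[0,1]$. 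Two caveats: for the core $L^{p}$ bound you ultimately quote the classical Calder\'{o}n commutator theorem, which is the same move the paper makes, so the hard analytic content remains outsourced; and the remark that the bilinear symbol $\lvert \xi \rvert - \lvert \xi - \eta \rvert$ divided by the frequency of $\phi$ ``is of Coifman--Meyer type'' is a slight overstatement, since the quotient fails the Coifman--Meyer derivative bounds near the degenerate frequency configurations and \cite{BSV19} in fact needs a paraproduct decomposition there; since you defer to their argument for that step anyway, this does not affect the correctness of your plan.
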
 

\subsection{Proof of Proposition \ref{Proposition 4.1}}
The proof of Proposition \ref{Proposition 4.1} (1) can be attained by applying the martingale representation theorem (e.g. \cite[Theorem 8.2]{DZ14}) to \cite[Proposition 4.1]{Y23a}. To prove Proposition \ref{Proposition 4.1} (2) concerning stability, we define 
\begin{equation}
\bar{\mathbb{S}} \triangleq C_{\text{loc}} ( [0,\infty); (H_{\sigma}^{4})^{\ast} \times U_{1}) \cap L_{\text{loc}}^{2} ([0,\infty); \dot{H}_{\sigma}^{\frac{1}{2}} \times U_{1}) 
\end{equation} 
and $F: \dot{H}_{\sigma}^{\frac{1}{2}} \mapsto (H_{\sigma}^{4})^{\ast}$ by 
\begin{equation}
F(\xi) \triangleq - \mathbb{P} [ ( \Lambda \xi \cdot \nabla) \xi - (\nabla \xi)^{T} \cdot \Lambda \xi ] - \Lambda^{\gamma_{1}} \xi
\end{equation} 
(see \cite[Equation (260)]{Y23a}). By hypothesis, for all $l \in \mathbb{N}, P_{l} \in \mathcal{W} ( s_{l}, \xi_{l}, \theta_{l}, \{ C_{t,q}\}_{q\in \mathbb{N}, t \geq s_{l}} )$ so that by Definition \ref{Definition 4.1} (M2), for any $t \geq s_{l}$, 
\begin{equation}
\xi(t)= \xi_{l} + \int_{s_{l}}^{t} F(\xi(r)) dr + \int_{s_{l}}^{t} G( \xi(r)) d\theta(r) \hspace{3mm} P_{l}\text{-a.s.}
\end{equation} 
where $\theta$ under $P_{l}$ is a cylindrical $(\bar{\mathcal{B}}_{t})_{t\geq s_{l}}$-Wiener process on $U$ starting from initial data $\theta_{l}$ at initial time $s_{l}$. I.e., the process $t \mapsto \theta (t + s_{l}) - \theta_{l}$ is a cylindrical Wiener process on $U$ with initial data 0 at initial time 0 as $\theta(s_{l}) = \theta_{l}$ $P_{l}$-a.s. by Definition \ref{Definition 4.1} (M1).  The law of this Wiener process is unique and tight in $C([0,\infty); U_{1}) \cap L_{\text{loc}}^{2} ([0,\infty); U_{1})$ so that for any $\epsilon > 0$, there exists a compact set 
\begin{align*}
K_{1} \subset C([0,\infty); U_{1}) \cap L_{\text{loc}}^{2} ([0,\infty); U_{1}) 
\end{align*}
such that  
\begin{equation}\label{est 7} 
\sup_{l\in\mathbb{N}} P_{l} (\theta(\cdot + s_{l})- \theta_{l} \in [C([0,\infty); U_{1}) \cap L_{\text{loc}}^{2} ([0,\infty); U_{1})]  \setminus K_{1}) \leq \epsilon. 
\end{equation} 
We now define 
\begin{equation}\label{est 8}
K_{2} \triangleq \cup_{l\in\mathbb{N}} \{ \theta \in C([0,\infty) ;  U_{1}): \hspace{0.5mm}  \theta(t+ s_{l}) - \theta_{l} \in K_{1} \hspace{1mm} \forall \hspace{0.5mm} t \in [0,\infty), \theta(t) = \theta_{l} \hspace{0.5mm} \forall \hspace{0.5mm} t \in [0, s_{l}] \}). 
\end{equation} 
We can compute 
\begin{equation}
\sup_{l\in\mathbb{N}} P_{l} (\overline{C([0,\infty); U_{1})} \setminus K_{2}) \overset{\eqref{est 7}}{\leq} \epsilon, \label{est 10}
\end{equation} 
and similar argument to previous works (e.g. \cite[Proof of Theorem 5.1]{HZZ19}) shows that $K_{2}$ is relatively compact in $C([0,\infty); U_{1}) \cap L_{\text{loc}}^{2} ([0,\infty); U_{1})$. On the other hand, from the proof of \cite[Proposition 5.2]{Y23a}, specifically \cite[Equation (261)]{Y23a}, we know that 
\begin{equation*}
K \triangleq \cup_{q\in\mathbb{N}} \cap_{k \in \mathbb{N}: k \geq k_{0}}\{ \xi \in \Omega_{q}: \sup_{t \in [0,k]} \lVert \xi(t) \rVert_{\dot{H}_{x}^{\frac{1}{2}}} + \sup_{r, t \in [0,k]: r \neq t} \frac{ \lVert \xi(t) - \xi(r) \rVert_{H_{x}^{-4}}}{\lvert t-r \rvert^{\kappa}} + \int_{s_{q}}^{k} \lVert \xi(r) \rVert_{\dot{H}_{x}^{\frac{1}{2} + \iota}}^{2} dr \leq R_{k} \}, 
\end{equation*} 
where $k_{0} \triangleq \sup_{l \in \mathbb{N}} s_{l}$,  is relatively compact in 
\begin{equation}
\mathbb{S} \triangleq C_{\text{loc}} ([0,\infty); (H_{\sigma}^{4})^{\ast}) \cap L_{\text{loc}}^{2} ([0,\infty); \dot{H}_{\sigma}^{\frac{1}{2}}).
\end{equation}  
(defined in \cite[Equation (258)]{Y23a}). Similarly to the proof of \cite[Proposition 5.2]{Y23a}, one can show 
\begin{equation}\label{est 15}
\sup_{l\in\mathbb{N}} P_{l} (\Omega_{0} \setminus \bar{K}) \leq \epsilon. 
\end{equation} 
It follows that $K \times K_{2}$ is relatively compact in $\bar{\mathbb{S}}$ and the tightness of $(P_{l})_{l\in\mathbb{N}}$ in $\bar{\mathbb{S}}$ follows from \eqref{est 10} and \eqref{est 15}. By Prokhorov's theorem (e.g., \cite[Theorem 2.3]{DZ14}) we may assume w.l.o.g. that $P_{l}$ converges weakly to some probability measure $P$. By Skorokhod's theorem (e.g., \cite[Theorem 2.4]{DZ14}), we deduce the existence of a probability space $(\tilde{\Omega}, \tilde{\mathcal{F}}, \tilde{P})$ and $\bar{\mathbb{S}}$-valued random variables $(\tilde{\xi}_{l}, \tilde{\theta}_{l})$ and $(\tilde{\xi}, \tilde{\theta})$ such that 
\begin{subequations}\label{est 16}
\begin{align}
& (\tilde{\xi}_{l}, \tilde{\theta}_{l}) \text{ has the law } P_{l} \hspace{1mm} \forall \hspace{1mm} l \in \mathbb{N}, \\
&(\tilde{\xi}_{l}, \tilde{\theta}_{l}) \to (\tilde{\xi}, \tilde{\theta}) \text{ in } \bar{\mathbb{S}} \hspace{1mm} \tilde{P}\text{-a.s.}\\
&  (\tilde{\xi}, \tilde{\theta}) \text{ has the law } P. 
\end{align}
\end{subequations} 
We denote by $(\tilde{\mathcal{F}}_{t})_{t\geq 0}$ the $\tilde{P}$-augmented canonical filtration of $(\tilde{\xi},\tilde{\theta})$. It follows that  $\tilde{\theta}$ is a cylindrical Wiener process on $U$ w.r.t. $(\tilde{\mathcal{F}}_{t})_{t\geq 0}$. 

The remaining task is to verify (M1)-(M3), of which (M1) and (M3) follow from the proof of \cite[Proposition 4.1]{Y23a} and hence we focus on the verification of (M2). As shown in \cite[Equations (273)-(274)]{Y23a}, we can show that for all  $\psi^{k} \in C^{\infty} (\mathbb{T}^{3}) \cap \dot{H}_{\sigma}^{\frac{1}{2}}$, $\tilde{P}$-a.s. 
\begin{equation*}
\langle \tilde{\xi}_{l}(t), \psi^{k} \rangle \to \langle \tilde{\xi}(t), \psi^{k} \rangle, \hspace{3mm} \int_{s_{l}}^{t} \langle F(\tilde{\xi}_{l} (r)), \psi^{k} \rangle dr  \to \int_{s}^{t} \langle F(\tilde{\xi}(r)), \psi^{k} \rangle dr.
\end{equation*} 
We define 
\begin{equation}\label{est 22} 
M_{t,s}^{\xi, k} \triangleq \langle \xi(t) - \xi(s) - \int_{s}^{t} F(\xi(r)) dr, \psi^{k} \rangle. 
\end{equation} 
Then similarly to \cite[Equations (279) and (281)]{Y23a}, we can show that for every $t \in [s, \infty)$ and $p \in (1,\infty)$, 
\begin{subequations}\label{est 17}
\begin{align} 
&\sup_{l\in\mathbb{N}} \mathbb{E}^{\tilde{P}}[ \lvert M_{t, s_{l}}^{\tilde{\xi}_{l}, k} \rvert^{2p}] \leq C \sup_{l\in\mathbb{N}} \mathbb{E}^{P_{l}} \left[ \left(\int_{s_{l}}^{t} \lVert G(\xi(r)) \rVert_{L_{2}(U; \dot{H}_{\sigma}^{\frac{1}{2}})}^{2} ds\right)^{p} \right] < \infty, \\
& \lim_{l\to\infty} \mathbb{E}^{\tilde{P}}[ \lvert M_{t, s_{l}}^{\tilde{\xi}_{l}, k} - M_{t,s}^{\tilde{\xi}, k} \rvert^{2}] = 0. 
\end{align}
\end{subequations}
Next, we let $t > r \geq s$ and $g$ be any bounded, $\bar{\mathcal{B}}_{r}$-measurable, continuous function on $\bar{\mathbb{S}}$. Using \eqref{est 17}, similarly to \cite[Equation (280)]{Y23a} we can show 
\begin{align*}
 \mathbb{E}^{P} [ (M_{t,s}^{\xi, k} - M_{r,s}^{\xi, k} ) g( (\xi, \theta)\lvert_{[0,r]})]  = 0 
\end{align*}
and consequently, $t \mapsto M_{t,s}^{k}$ is a $(\bar{\mathcal{B}}_{t})_{t \geq s}$-martingale under $P$. Similarly to \cite[Equation (282)]{Y23a} we can also show 
\begin{equation*}
\mathbb{E}^{P} [ (M_{t,s}^{\xi, k})^{2} - \int_{s}^{t} \lVert G(\xi (\lambda) )^{\ast} \psi^{k} \rVert_{U}^{2}  d\lambda  \lvert \bar{\mathcal{B}}_{r}] = (M_{r,s}^{\xi, k})^{2} - \int_{s}^{r} \lVert G(\xi (\lambda) )^{\ast} \psi^{k} \rVert_{U}^{2} d\lambda 
\end{equation*}
which implies $\langle \langle M_{t,s}^{\xi, k} \rangle \rangle = \int_{s}^{t} \lVert G(\xi (\lambda) )^{\ast} \psi^{k} \rVert_{U}^{2} d\lambda $ under $P$. Finally, in order to compute the cross variation process $\langle \langle M_{t,s}^{\xi, k}, \int_{s}^{t} \langle \psi^{k}, G(\xi) d \theta \rangle \rangle \rangle$, we let $\{\gamma_{j} \}_{j\in\mathbb{N}}$ be an orthonormal basis of $U$,  define $\theta_{j} \triangleq \langle \theta, \gamma_{j}  \rangle_{U}$, and compute  
\begin{equation}\label{est 18}
\langle \langle M_{t,s}^{\xi, k}, \theta_{j} (t) - \theta_{j} (s) \rangle \rangle = \int_{s}^{t} \langle G^{\ast} (\xi) \psi^{k}, \gamma_{j}  \rangle_{U} d\lambda. 
\end{equation} 
Consequently, $\langle \langle M_{t,s}^{\xi, k} - \int_{s}^{t} \langle \psi^{k}, G(\xi) d\theta \rangle \rangle \rangle  = 0$; hence, $M_{t,s}^{\xi, k} = \int_{s}^{t} \langle \psi^{k}, G(\xi) d\theta \rangle$. By definition of $M_{t,s}^{\xi, k}$, this implies \eqref{est 2} and verifies (M2).

\subsection{Proof of Theorem \ref{Theorem 2.2} assuming Theorem \ref{Theorem 2.1}} 
We fix $T > 0$ arbitrarily, $K > 1$, and $\kappa \in (0,1)$ such that $\kappa K^{2} \geq 1$ to apply Theorem \ref{Theorem 2.1}. The probability measure $P \otimes_{\tau_{L}}R$ from Proposition \ref{Proposition 4.5} satisfies $P \otimes_{\tau_{L}} R ( \{ \tau_{L} \geq T \}) > \kappa$ due to Lemma \ref{Lemma 4.3} and \eqref{est 148} which implies $\mathbb{E}^{P \otimes_{\tau_{L}}R}[ \lVert \xi(T) \rVert_{\dot{H}_{x}^{\frac{1}{2}}}^{2}] > \kappa K^{2} e^{T} \lVert v^{\text{in}} \rVert_{\dot{H}_{x}^{\frac{1}{2}}}^{2}$ where $v^{\text{in}}$ is the deterministic initial data in Theorem \ref{Theorem 2.1}. This, along with \eqref{est 37}, implies a lack of joint uniqueness in law and consequently non-uniqueness in law for \eqref{est 30} due to Cherny's theorem (\cite[Lemma C.1]{HZZ19}, \cite[Theorem 3.1]{C03}). 

\subsection{Proof of Theorem \ref{Theorem 2.1} assuming Proposition \ref{Proposition 5.2}}
Let us fix $T > 0, K > 1$, and $\kappa \in (0,1)$. We take $L$ that satisfies \eqref{est 443} and \eqref{est 445} larger if necessary to satisfy 
\begin{equation}\label{est 335}
\sqrt{2} e^{2LT} > (\sqrt{8} + \sqrt{2}) e^{2L^{\frac{1}{2}}}\hspace{1mm}  \text{ and } \hspace{1mm} L > [\ln (K e^{\frac{T}{2}})]^{2}. 
\end{equation}  
Because $\lim_{L\to\infty} T_{L} = \infty$ $\textbf{P}$-a.s., for the fixed $T> 0$ and $\kappa > 0$, increasing $L$ larger if necessary gives us \eqref{est 148}. We fix such $L > 1$. Then we turn to Proposition \ref{Proposition 5.1} to find $a \in 5 \mathbb{N}$ and $\beta$ in \eqref{est 315b} to obtain a collection $(y_{q}, \mathring{R}_{q})_{q \in \mathbb{N}_{0}}$, each of which solves \eqref{est 320} and satisfies the Hypothesis \ref{Hypothesis 5.1} over $[t_{q}, T_{L}]$, as well as \eqref{est 327} over $[t_{q+1}, T_{L}]$. It follows that for all $\beta' \in (\frac{1}{2}, \beta)$, because $b^{q+1} \geq b(q+1)$ for all $q \geq 0$ and $b \geq 2$,  
\begin{align*}
\sum_{q\geq 0} \lVert y_{q+1}(t) - y_{q}(t) \rVert_{C_{x}^{\beta'}} \overset{\eqref{est 327} \eqref{est 317}}{\lesssim} &  \sum_{q\geq 0} ( e^{\frac{1}{2} L^{\frac{1}{4}}}M_{0}(t)^{\frac{1}{2}} \delta_{q+1}^{\frac{1}{2}})^{1- \beta' } ( C_{0} m_{L} M_{0}(t)^{\frac{1}{2}} \lambda_{q+1}^{2-\gamma_{2}} \delta_{q+1}^{\frac{1}{2}} )^{\beta'} \\
& \hspace{15mm}  \lesssim m_{L} M_{0}(t)^{\frac{1}{2}}\sum_{q\geq 0} a^{b(q+1) (-\beta + \beta')} < \infty. 
\end{align*}  
This implies that $\{ y_{q}\}_{q \in \mathbb{N}_{0}}$ is Cauchy in $C_{t}C_{x}^{\beta'}$ for all $\beta' \in (0, \beta)$. Additionally, we can compute the following temporal regularity estimate that we present for general $\gamma_{2} \in [1,2)$ for the convenience in the proof of Proposition \ref{Proposition 5.2}: due to \eqref{est 319}, \eqref{est 332}, and \eqref{est 317} 
\begin{align}
\lVert \partial_{t} y_{q} \rVert_{C_{t,x,q}}  \leq& \lVert (\partial_{t} + \Upsilon_{l} \Lambda^{2-\gamma_{2}} y_{l} \cdot \nabla) y_{q}  \rVert_{C_{t,x,q}} + \lVert \Upsilon_{l} (\Lambda^{2- \gamma_{2}} y_{l} \cdot \nabla) y_{q} \rVert_{C_{t,x,q}} \label{est 330}   \\ 
\lesssim& L^{\frac{1}{2}} e^{2L^{\frac{1}{4}}} M_{0}(t) \lambda_{q}^{3-\gamma_{2}} \delta_{q} + e^{L^{\frac{1}{4}}} \lVert \Lambda^{2- \gamma_{2}} y_{q} \rVert_{C_{t,x,q}} \lVert \nabla y_{q} \rVert_{C_{t,x,q}} \lesssim m_{L}^{2} e^{L^{\frac{1}{4}}} M_{0}(t) \lambda_{q}^{3-\gamma_{2}} \delta_{q}.\nonumber 
\end{align} 
Thus, we can estimate for any $\eta \in [0, \frac{\beta}{2-\beta})$ where $\beta < 2$ due to \eqref{est 315b}, for all $q \in \mathbb{N}$, by \eqref{est 327} and \eqref{est 330} 
\begin{equation*}
\lVert y_{q}- y_{q-1} \rVert_{C_{t,q}^{\eta} C_{x}} \lesssim \lVert y_{q} - y_{q-1} \rVert_{C_{t,x,q}}^{1- \eta} \lVert y_{q} - y_{q-1} \rVert_{C_{t,q}^{1} C_{x}}^{\eta} 
\lesssim m_{L}^{2\eta} e^{(\frac{1+ \eta}{2}) L^{\frac{1}{4}}}  M_{0}(t)^{\frac{1+\eta}{2}}  \lambda_{q}^{(2-\beta) \eta - \beta} \to 0 
\end{equation*} 
as $q \to \infty$. Hence, there exists a limiting solution $\lim_{q\to\infty} y_{q} \triangleq y \in C([0,T_{L}]; C^{\beta'} (\mathbb{T}^{2})) \cap C^{\eta} ( [0, T_{L}]; C(\mathbb{T}^{2}))$, and there exists deterministic constant $C_{L} > 0$ with which $y$ satisfies the following inequality: 
\begin{equation}\label{est 137}
\lVert y \rVert_{C_{T_{L}}C_{x}^{\beta'}} + \lVert y \rVert_{C_{T_{L}}^{\eta} C_{x}} \leq C_{L} \hspace{3mm}\forall \hspace{1mm} \beta' \in \left(\frac{1}{2}, \beta \right), \eta \in \left[0, \frac{\beta}{2-\beta} \right), 
\end{equation}  
where $\frac{1}{3}< \frac{\beta}{2-\beta}$. Let us now formally denote its initial data as $y^{\text{in}} \triangleq y\rvert_{t=0}$. We can define $v^{\text{in}} \triangleq y^{\text{in}}$. We see that $y$ is $(\mathcal{F}_{t})_{t\geq 0}$-adapted because each $y_{q}$ is $(\mathcal{F}_{t})_{t\geq 0}$-adapted according to Propositions \ref{Proposition 5.1}-\ref{Proposition 5.2}. Moreover, we deduce from \eqref{est 318} and \eqref{est 315b} that $\lVert \mathring{R}_{q} \rVert_{C_{T_{L}} C_{x}} \leq  \epsilon_{\gamma} M_{0}(L) \lambda_{q+1}^{1- 2\beta}  \to 0$ as $q\to\infty$. It follows from Calder$\acute{\mathrm{o}}$n commutator estimate, specifically \eqref{est 50}, that $v \triangleq \Upsilon y$ is $(\mathcal{F}_{t})_{t\geq 0}$-adapted and solves \eqref{est 30} weakly. 

Next, we can directly compute 
\begin{equation}\label{est 334}
\lVert \Lambda^{\frac{1}{2}} y_{0} (t) \rVert_{L_{x}^{2}}^{2} 
\overset{\eqref{est 321} \eqref{est 171}}{=} m_{L}^{2}  M_{0}(t) 8 \pi^{4}.
\end{equation} 
Moreover, \eqref{est 137} which holds for $\eta \leq \frac{1}{3}$ and \eqref{est 332} give us for a.e. $\omega \in \Omega$ and all $\beta' \in (\frac{1}{2}, \beta)$ 
\begin{equation*}
\lVert v(\omega) \rVert_{C_{T_{L}} C_{x}^{\beta'}} +  \lVert v(\omega) \rVert_{C_{T_{L}}^{\eta} C_{x}}  \overset{\eqref{est 325} \eqref{est 332}}{\lesssim} e^{L^{\frac{1}{4}}}  \lVert y (\omega) \rVert_{C_{T_{L}} C_{x}^{\beta'}} + m_{L}^{2} \lVert y(\omega) \rVert_{C_{T_{L}}^{\eta} C_{x}} \overset{\eqref{est 137}}{<} \infty,
\end{equation*} 
which implies \eqref{est 9}. Next, we compute for all $t \in [0, T_{L}]$ using Hypothesis \ref{Hypothesis 5.1} \ref{Hypothesis 5.1 (a)}, the fact that $b^{q+1} \geq b(q+1)$ for all $q \geq 0$ and $b \geq 2$, that \eqref{est 315b} guarantees $\frac{1}{2} - \beta < 0$, and that $2C_{0} \geq \pi$ due to \eqref{est 481}, 
\begin{equation}\label{est 333}
\lVert y(t)  - y_{0}(t) \rVert_{\dot{H}_{x}^{\frac{1}{2}}} \overset{\eqref{est 327}}{\leq}  2^{\frac{3}{2}} \pi C_{0} m_{L} M_{0}(t)^{\frac{1}{2}} \frac{ a^{b(\frac{1}{2} - \beta)}}{1- a^{b( \frac{1}{2} - \beta)}} \overset{\eqref{est 153b}}{<} m_{L} M_{0}(t)^{\frac{1}{2}} \sqrt{2} \pi^{2}.
\end{equation}
It follows that 
\begin{align}
\lVert y(T)\rVert_{\dot{H}_{x}^{\frac{1}{2}}} \overset{\eqref{est 334}}{\geq} & m_{L} M_{0}(T)^{\frac{1}{2}} \sqrt{8} \pi^{2} - \lVert y(T) - y_{0}(T) \rVert_{\dot{H}_{x}^{\frac{1}{2}}} \overset{\eqref{est 333} \eqref{est 326}}{>}  m_{L} e^{2LT + L} \sqrt{2} \pi^{2}  \nonumber \\
\overset{\eqref{est 335} \eqref{est 333}}{>}& e^{2L^{\frac{1}{2}}} ( \lVert y_{0}(0)  \rVert_{\dot{H}_{x}^{\frac{1}{2}}} + \lVert y(0) - y_{0} (0) \rVert_{\dot{H}_{x}^{\frac{1}{2}}}) \geq e^{2L^{\frac{1}{2}}} \lVert y(0) \rVert_{\dot{H}_{x}^{\frac{1}{2}} }.  \label{est 336} 
\end{align}
Therefore, we can conclude \eqref{est 313} as follows: on $\{T_{L} \geq T \}$ 
\begin{equation*}
\lVert v(T) \rVert_{\dot{H}_{x}^{\frac{1}{2}}} \overset{\eqref{est 325} \eqref{est 336} \eqref{est 324} }{>} e^{- L^{\frac{1}{4}}} e^{2L^{\frac{1}{2}}} \lVert y(0) \rVert_{\dot{H}_{x}^{\frac{1}{2}}} \overset{\eqref{est 335}}{>}  K e^{\frac{T}{2}} \lVert v^{\text{in}} \rVert_{\dot{H}_{x}^{\frac{1}{2}}}. 
\end{equation*} 
At last, we note that $v^{\text{in}} (x) = \Upsilon(0) y(0,x) = y(0,x)$ is deterministic because $y_{q}(0,x)$ is deterministic for all $q \in \mathbb{N}_{0}$ due to Propositions \ref{Proposition 5.1}-\ref{Proposition 5.2}. 

\subsection{Proof of \eqref{est 307}} 
Here, we explain the proof of \eqref{est 307} for completeness, we only sketch it and refer to \cite{Y23a} for details due to their similarities. First, the computations in the additive case, specifically \cite[Equation (170)]{Y23a}, inform us that we may define $R_{O, \text{high}}$ in the case of linear multiplicative noise as 
\begin{align}
&R_{O, \text{high}} \triangleq  \nonumber \\
&  \mathcal{B} \tilde{P}_{\approx \lambda_{q+1}} ( \sum_{j,j',k,k': k + k' \neq 0} (\Lambda^{2-\gamma_{2}} \mathbb{P}_{q+1, k} [\chi_{j} a_{k,j} b_{k} (\lambda_{q+1}\Phi_{j} )]) \cdot \nabla \mathbb{P}_{q+1, k'} [ \chi_{j'} a_{k', j'} b_{k'} (\lambda_{q+1} \Phi_{j'})] \nonumber \\
& \hspace{20mm}  - (\nabla \mathbb{P}_{q+1,k} [\chi_{j} a_{k,j} b_{k} (\lambda_{q+1} \Phi_{j} ) ])^{T} \cdot \Lambda^{2-\gamma_{2}} \mathbb{P}_{q+1, k'} [ \chi_{j'} a_{k', j'} b_{k'} (\lambda_{q+1} \Phi_{j'})]). \label{est 386}  
\end{align}
Next, we can define $\mathcal{J}_{j,k}$ identically to \cite[Equation (171)]{Y23a} but with $ \tilde{w}_{q+1, j, k}$ therein replaced by $\chi_{j} a_{k,j} b_{k} (\lambda_{q+1} \Phi_{j})$ and $\vartheta_{j,k}$ similarly to\cite[Equation (179)]{Y23a}, specifically 
\begin{align*}
\mathcal{J}_{j,k} &\triangleq \frac{1}{2} [  \Lambda^{2-\gamma_{2}} \mathbb{P}_{q+1,k} [\chi_{j} a_{k,j} b_{k} (\lambda_{q+1} \Phi_{j})] \cdot \nabla \mathbb{P}_{q+1, -k} [\chi_{j} a_{-k,j} b_{-k} (\lambda_{q+1} \Phi_{j})]\\
&\hspace{30mm} + \Lambda^{2- \gamma_{2}} \mathbb{P}_{q+1, -k} [\chi_{j} a_{-k,j} b_{-k} (\lambda_{q+1} \Phi_{j})] \cdot \nabla \mathbb{P}_{q+1, k} [\chi_{j} a_{k,j} b_{k} (\lambda_{q+1} \Phi_{j})] \nonumber\\
& \hspace{10mm}  - (\nabla \mathbb{P}_{q+1, k} [\chi_{j} a_{k,j} b_{k} (\lambda_{q+1} \Phi_{j})])^{T} \cdot \Lambda^{2- \gamma_{2}} \mathbb{P}_{q+1, -k} [\chi_{j} a_{-k,j} b_{-k} (\lambda_{q+1} \Phi_{j})] \nonumber \\
& \hspace{10mm} - (\nabla \mathbb{P}_{q+1, -k} \chi_{j} a_{-k,j} b_{-k} (\lambda_{q+1} \Phi_{j}))^{T} \cdot \Lambda^{2- \gamma_{2}} \mathbb{P}_{q+1, k} [\chi_{j} a_{k,j} b_{k} (\lambda_{q+1} \Phi_{j})] ], \nonumber 
\end{align*}
and 
\begin{equation*}
\vartheta_{j,k} \triangleq \nabla^{\bot} \cdot \mathbb{P}_{q+1, k} \chi_{j} a_{k,j} b_{k} (\lambda_{q+1} \Phi_{j}). 
\end{equation*} 
It follows that the decomposition of $\mathcal{J}_{j,k}$ that is akin to \cite[Equation (172)]{Y23a} holds with $\mathcal{T}^{m}$, $\mathcal{L}_{j,k}$, and $ \mathcal{P}_{j,k}$ defined similarly to \cite[Equations (188), (190), and (192)]{Y23a}: 
\begin{equation*}
\mathcal{J}_{j,k} = \nabla \mathcal{P}_{j,k} + \divergence (\mathcal{L}_{j,k})  = \nabla \left( \mathcal{P}_{j,k} + \frac{ \mathcal{L}_{j,k}^{11} + \mathcal{L}_{j,k}^{22}}{2} \right) + \divergence (\mathring{\mathcal{L}}_{j,k}), 
\end{equation*} 
where 
\begin{align*}
&\mathcal{P}_{j,k} \triangleq \frac{1}{2} ( \Lambda^{-\gamma_{2}} \vartheta_{j,k}) \vartheta_{j, -k}, \\
& \mathcal{L}_{j,k}^{ml} \triangleq \frac{\gamma_{2}}{2} \mathcal{T}^{m} ( \Lambda^{ -\gamma_{2}} \vartheta_{j,k}, \Lambda^{1- \gamma_{2}} \mathcal{R}^{l} \vartheta_{j, -k} ) \\
& \hspace{10mm} \text{ in which  } \mathcal{T}^{m} (f,g) (x) \triangleq \int\int_{\mathbb{R}^{2} \times \mathbb{R}^{2}} K_{s^{m}} (x-y, x-z) f(y) g(z) dy dz \nonumber \\
& \hspace{20mm} ( \mathcal{T}^{m} (f,g))\hspace{1mm}\hat{}\hspace{1mm}  (\xi) \triangleq \int_{\mathbb{R}^{2}} s^{m} (\xi - \eta, \eta) \hat{f} (\xi - \eta) \hat{g} (\eta) d\eta.  \nonumber 
\end{align*}
This allows us to define 
\begin{equation}\label{est 259}
R_{O, \text{approx}} \triangleq \sum_{j} \chi_{j}^{2} (\mathring{R}_{l} -\mathring{R}_{q,j}) \hspace{2mm} \text{ and }  \hspace{2mm} R_{O, \text{low}} \triangleq \sum_{j} \chi_{j}^{2} \mathring{R}_{q,j} + \sum_{j,k}  \mathring{\mathcal{L}}_{j,k} 
\end{equation} 
and write 
\begin{equation}\label{est 291}
R_{O} = R_{O, \text{approx}} + R_{O, \text{low}} + R_{O, \text{high}}.
\end{equation}  
The identities 
\begin{align*} 
&\mathbb{P}_{q+1, k} [\chi_{j} a_{k,j} b_{k} (\lambda_{q+1} \Phi_{j})](x) \nonumber\\
& \hspace{10mm} =[\chi_{j} a_{k,j} b_{k} (\lambda_{q+1} \Phi_{j})] (x) +\chi_{j} [\mathbb{P}_{q+1, k}, a_{k,j} (x) \psi_{q+1, j, k} (x) ] b_{k} (\lambda_{q+1} x),   \\
&\Lambda^{2- \gamma_{2}} \mathbb{P}_{q+1, k} [\chi_{j} a_{k,j} b_{k} (\lambda_{q+1} \Phi_{j})](x) \nonumber\\
&\hspace{7mm} = \lambda_{q+1}^{2- \gamma_{2}} [\chi_{j} a_{k,j} b_{k} (\lambda_{q+1} \Phi_{j})](x) + \chi_{j} [\mathbb{P}_{q+1, k} \Lambda^{2- \gamma_{2}}, a_{k,j}(x) \psi_{q+1, j, k}(x) ] b_{k} (\lambda_{q+1} x), \\
& \Lambda^{- \gamma_{2}} \vartheta_{j,k}(x) =  \chi_{j} ik^{\bot} \cdot \Lambda^{1- \gamma_{2}} \mathcal{R}^{\bot} P_{\approx \lambda_{q+1} k} (a_{k,j} (x) \psi_{q+1, j, k} (x) c_{k} (\lambda_{q+1} x)),  \\
& \Lambda^{1- \gamma_{2}} \mathcal{R}_{l} \vartheta_{j, -k}(x) = - i \chi_{j} k^{\bot}  \cdot \nabla^{\bot}  \Lambda^{1- \gamma_{2}} \mathcal{R}_{l} P_{\approx -k \lambda_{q+1}} ( a_{-k, j} (x) \psi_{q+1, j, -k} (x) c_{-k} (\lambda_{q+1} x) ), \\
& ( \Lambda^{- \gamma_{2}} \vartheta_{j,k} )\hspace{1mm}\hat{}\hspace{1mm} (\xi) = - \chi_{j} \frac{ k \cdot \xi}{\lvert \xi \rvert^{\gamma_{2}}} \hat{K}_{\approx 1} \left( \frac{ \xi}{\lambda_{q+1}} - k \right) (a_{k,j} \psi_{q+1, j, k})\hspace{1mm}\hat{}\hspace{1mm}  (\xi - k \lambda_{q+1}), \\
&( \Lambda^{1- \gamma_{2}} \mathcal{R}_{l} \vartheta_{j, -k})\hspace{1mm}\hat{}\hspace{1mm}  (\xi)= \chi_{j} i \xi_{l} \frac{ k \cdot \xi}{\lvert \xi \rvert^{\gamma_{2}}} \hat{K}_{\approx 1} \left( \frac{\xi}{\lambda_{q+1}} + k \right) (a_{k,j} \psi_{q+1, j, -k})\hspace{1mm}\hat{}\hspace{1mm} (\xi + k \lambda_{q+1}),
\end{align*}
from \cite[Equations (175), (178), (194)--(195)]{Y23a} continue to hold. They allow us to define 
\begin{align*}
M_{k}^{ml} (\xi, \eta) \triangleq& - i s^{m} (\xi + k \lambda_{q+1}, \eta - k \lambda_{q+1}) \frac{ k\cdot ( \xi + k \lambda_{q+1} )}{\lvert \xi + k \lambda_{q+1} \rvert^{\gamma_{2}}} \nonumber\\
& \times \hat{K}_{\approx 1} \left( \frac{\xi}{\lambda_{q+1}} \right) (\eta_{l} - k_{l} \lambda_{q+1}) \frac{ k \cdot (\eta - k \lambda_{q+1})}{\lvert \eta - k \lambda_{q+1} \rvert^{\gamma_{2}}} \hat{K}_{\approx 1} \left( \frac{\eta}{\lambda_{q+1}} \right),  
\end{align*}
\begin{align*}
M_{k,r}^{ml} (\xi, \eta) \triangleq& \frac{ ((1-r) \eta - r \xi - k \lambda_{q+1})_{m}}{\lvert (1-r) \eta - r \xi - k \lambda_{q+1} \rvert^{2- \gamma_{2}}} \nonumber\\
& \times \frac{ k \cdot (\xi + k \lambda_{q+1} )}{\lvert \xi + k \lambda_{q+1} \rvert^{\gamma_{2}}} \hat{K}_{\approx 1} \left( \frac{\xi}{\lambda_{q+1}} \right) (\eta_{l} - k_{l} \lambda_{q+1}) \frac{ k \cdot (\eta - k \lambda_{q+1} )}{\lvert \eta - k \lambda_{q+1} \rvert^{\gamma_{2}}} \hat{K}_{\approx 1} \left( \frac{\eta}{\lambda_{q+1}} \right),   
\end{align*}
\begin{equation*}
(M_{k,r}^{\ast})^{ml} (\xi, \tilde{\xi}) \triangleq \frac{ (( 1-r) \tilde{\xi} - r \xi - k)_{m}}{\lvert (1-r) \tilde{\xi} - r \xi - k \rvert^{2 -\gamma_{2}}} (\tilde{\xi}_{l} - k_{l}) \frac{ k \cdot (\xi + k) k \cdot (\tilde{\xi} - k)}{\lvert \xi + k \rvert^{\gamma_{2}} \lvert \tilde{\xi} - k \rvert^{\gamma_{2}}} \hat{K}_{\approx 1} (\xi) \hat{K}_{\approx 1} (\tilde{\xi}),
\end{equation*}
similarly to \cite[Equations (196)--(197), (200)]{Y23a} and retain the identities 
\begin{align*}
\mathcal{L}_{j,k}^{ml} (x) =& \frac{\gamma_{2}}{2} \frac{\chi_{j}^{2}(t)}{(2\pi)^{2}} \int \int_{\mathbb{R}^{2} \times \mathbb{R}^{2}} M_{k}^{ml} (\xi, \eta) \nonumber\\
& \times (a_{k,j} \psi_{q+1, j, k})\hspace{1mm}\hat{}\hspace{1mm} (\xi) (a_{k,j} \psi_{q+1, j, -k})\hspace{1mm}\hat{}\hspace{1mm} (\eta) e^{ix\cdot (\xi + \eta)} d\xi d\eta, 
\end{align*}
\begin{align*}
& M_{k,r}^{ml} (\xi, \eta)=\lambda_{q+1}^{2- \gamma_{2}} (M_{k,r}^{\ast})^{ml} \left(\frac{ \xi}{\lambda_{q+1}}, \frac{\eta}{\lambda_{q+1}} \right), \\
& M_{k,r}^{ml} (0,0) = - k_{m} k_{l} \lambda_{q+1}^{2-\gamma_{2}}  \hspace{5mm} \forall \hspace{1mm} k \in \mathbb{S}^{1}, \\
&  \frac{1}{(2\pi)^{2}} \int\int_{\mathbb{R}^{2} \times \mathbb{R}^{2}} (a_{k,j} \psi_{q+1, j, k}) \hspace{1mm} \hat{}\hspace{1mm} (\xi) (a_{k,j} \psi_{q+1, j, -k}) \hspace{1mm} \hat{}\hspace{1mm} (\eta) e^{ix\cdot (\xi + \eta)} d\xi d \eta \overset{\eqref{est 200b}}{=} a_{k,j}^{2} (x),
\end{align*}
from \cite[Equations (199), (201)--(202)]{Y23a}. Additionally, we can define 
\begin{align*}
\tilde{\mathcal{L}}_{j,k}^{ml} (t,x) \triangleq& \frac{\gamma_{2}}{2}\frac{\chi_{j}^{2}(t)}{(2\pi)^{2}} \int\int_{\mathbb{R}^{2} \times \mathbb{R}^{2}} \int_{0}^{1} [ M_{k,r}^{ml} (\xi,\eta) - M_{k,r}^{ml} (0,0) ] dr \nonumber  \\
& \hspace{20mm} \times  (a_{k,j} \psi_{q+1, j, k})(\xi)\hspace{1mm}\hat{}\hspace{1mm} (a_{k,j} \psi_{q+1, j, -k})\hspace{1mm}\hat{}\hspace{1mm} (\eta) e^{ix\cdot (\xi + \eta)} d\xi d \eta, 
\end{align*}
\begin{align*}
( \tilde{\mathcal{L}}_{j,k}^{(1)})^{ml} (t,x) \triangleq& -i\frac{\gamma_{2}}{2}  \frac{ \chi_{j}^{2}(t)}{(2\pi)^{2}} \int_{0}^{1}\int_{0}^{1}\int \int_{\mathbb{R}^{2} \times \mathbb{R}^{2}} \lambda_{q+1}^{1- \gamma_{2}}  \nabla_{\xi} (M_{k,r}^{\ast})^{ml} \left( \frac{ \bar{r}\xi}{\lambda_{q+1}}, \frac{\bar{r} \eta}{\lambda_{q+1}} \right) \nonumber \\
& \cdot ( \nabla (a_{k,j} \psi_{q+1, j, k} ))\hspace{1mm}\hat{}\hspace{1mm} (\xi) (a_{k,j} \psi_{q+1, j, -k})\hspace{1mm}\hat{}\hspace{1mm}  (\eta) e^{ix\cdot (\xi + \eta)} d\xi d\eta d r d \bar{r}, \\ 
 ( \tilde{\mathcal{L}}_{j,k}^{(2)})^{ml} (t,x) \triangleq& -i \frac{\gamma_{2}}{2} \frac{\chi_{j}^{2} (t)}{(2\pi)^{2}} \int_{0}^{1}\int_{0}^{1} \int\int_{\mathbb{R}^{2} \times \mathbb{R}^{2}} \lambda_{q+1}^{1- \gamma_{2}}  \nabla_{\eta} (M_{k,r}^{\ast})^{ml} \left( \frac{\bar{r} \xi}{\lambda_{q+1}}, \frac{\bar{r} \eta}{\lambda_{q+1}} \right) \nonumber \\
& \cdot ( a_{k,j} \psi_{q+1, j, k} )\hspace{1mm}\hat{}\hspace{1mm} (\xi) (\nabla a_{k,j} \psi_{q+1, j, -k})\hspace{1mm}\hat{}\hspace{1mm}  (\eta) e^{ix \cdot (\xi + \eta)} d\xi d \eta d r d \bar{r} 
\end{align*}
similarly to \cite[Equation (203) and (205)]{Y23a} and retain the identities 
\begin{equation}\label{est 260} 
\mathcal{L}_{j,k}^{ml} (t, x)  =  \frac{\gamma_{2}}{2} \chi_{j}^{2}(t) \lambda_{q+1}^{2- \gamma_{2}} (k^{\bot} \otimes k^{\bot} -\Id)^{ml} a_{k,j}^{2}(t,x) + \tilde{\mathcal{L}}_{j,k}^{ml} (t,x), 
\end{equation} 
\begin{equation}\label{est 261}
\tilde{\mathcal{L}}_{j,k}^{ml} (t,x)  = ( \tilde{\mathcal{L}}_{j,k}^{(1)})^{ml} (t,x)  + ( \tilde{\mathcal{L}}_{j,k}^{(2)})^{ml} (t,x) 
\end{equation}
from \cite[Equations (204) and (206)]{Y23a}. Finally, we can define for $z, \tilde{z} \in \mathbb{R}^{2}$
\begin{align*}
& ( \mathcal{K}_{k, r, \bar{r}}^{(1)})^{ml} (z, \tilde{z}) \triangleq -i \left( \frac{ \lambda_{q+1}}{\bar{r}} \right)^{4}  ( \nabla_{\xi} (M_{k,r}^{\ast} )^{ml})\hspace{1mm}\check{}\hspace{1mm}  \left( \frac{\lambda_{q+1} z}{\bar{r}}, \frac{\lambda_{q+1} \tilde{z}}{\bar{r}}\right), \\
& ( \mathcal{K}_{k, r, \bar{r}}^{(2)})^{ml} (z, \tilde{z}) \triangleq -i\left( \frac{ \lambda_{q+1}}{\bar{r}} \right)^{4}  ( \nabla_{\eta} (M_{k,r}^{\ast} )^{ml})\hspace{1mm}\check{}\hspace{1mm}  \left( \frac{\lambda_{q+1} z}{\bar{r}}, \frac{\lambda_{q+1} \tilde{z}}{\bar{r}}\right), 
\end{align*}
and 
\begin{subequations}\label{est 258} 
\begin{align}
& \tilde{\mathcal{T}}_{k}^{(1), ml} (\nabla (a_{k,j} \psi_{q+1, j, k}), a_{k,j} \psi_{q+1, j, -k}) (t,x) \triangleq \frac{\gamma_{2}}{2} \lambda_{q+1}^{1-\gamma_{2}} \int_{0}^{1}\int_{0}^{1} \int\int_{\mathbb{R}^{2} \times \mathbb{R}^{2}} ( \mathcal{K}_{k,r,\bar{r}}^{(1)})^{ml} ( x- z, x - \tilde{z}) \nonumber\\
& \hspace{45mm} \cdot \nabla (a_{k,j} \psi_{q+1, j, k})(z) (a_{k,j} \psi_{q+1, j, -k}) (\tilde{z}) dz d \tilde{z} dr d \bar{r},\\
& \tilde{\mathcal{T}}_{k}^{(2), ml} ( a_{k,j} \psi_{q+1, j, k}, \nabla (a_{k,j} \psi_{q+1, j, -k})) (t,x) \triangleq \frac{\gamma_{2}}{2} \lambda_{q+1}^{1-\gamma_{2}} \int_{0}^{1}\int_{0}^{1} \int\int_{\mathbb{R}^{2} \times \mathbb{R}^{2}} ( \mathcal{K}_{k,r,\bar{r}}^{(2)})^{ml} ( x- z, x - \tilde{z}) \nonumber\\
& \hspace{45mm}\cdot (a_{k,j} \psi_{q+1, j, k})(z) \nabla (a_{k,j} \psi_{q+1, j, -k}) (\tilde{z}) dz d \tilde{z} dr d \bar{r},
\end{align}
\end{subequations} 
similarly to \cite[Equations (207)--(208)]{Y23a} and retain the identities  
\begin{subequations}\label{est 418}
\begin{align}
& (\tilde{\mathcal{L}}_{j,k}^{(1)})^{ml} (t,x)  = \chi_{j}^{2}(t) \tilde{\mathcal{T}}_{k}^{(1), ml} (\nabla (a_{k,j} \psi_{q+1, j, k}), a_{k,j} \psi_{q+1, j, -k})(t,x), \\
& ( \tilde{\mathcal{L}}_{j,k}^{(2)})^{ml} (t,x)  =  \chi_{j}^{2}(t) \tilde{\mathcal{T}}_{k}^{(2), ml} (a_{k,j} \psi_{q+1, j, k}, \nabla (a_{k,j} \psi_{q+1, j, -k}))(t,x), 
\end{align}
\end{subequations}
and an estimate 
\begin{equation}\label{est 267}
\left\lVert (z, \tilde{z})^{a} \nabla_{(z, \tilde{z})}^{b} ( \mathcal{K}_{k, r, \bar{r}}^{(i)} )^{ml} \right\rVert_{L_{z, \tilde{z}}^{1}(\mathbb{R}^{2} \times \mathbb{R}^{2} )} \leq C_{a,b} \left( \frac{\lambda_{q+1}}{\bar{r}}\right)^{\lvert b \rvert - \lvert a \rvert}, \hspace{3mm} i \in \{1,2\}, \hspace{1mm} 0 \leq \lvert a \rvert, \lvert b \rvert \leq 1,
\end{equation} 
which holds for uniformly in $r \in (0,1)$, from \cite[Equations (209)--(210)]{Y23a}. At last, \eqref{est 259}, \eqref{est 260}, and \eqref{est 261} lead to similar decomposition as \cite[Equations (211)--(212)]{Y23a}, namely 
\begin{equation}\label{est 262}
R_{O,\text{low}} = \mathring{O}_{1} + \mathring{O}_{2} 
\end{equation} 
where $\mathring{O}_{1}$ and $\mathring{O}_{2}$ respectively are the trace-free parts of 
\begin{subequations}\label{est 263}
\begin{align}
& O_{1}(t,x) \triangleq \sum_{j} \chi_{j}^{2}(t) \mathring{R}_{q,j}(t,x) + \frac{ \gamma_{2} \lambda_{q+1}^{2-\gamma_{2}}}{2}  \sum_{j,k}  (k^{\bot} \otimes k^{\bot} - \Id) \chi_{j}^{2}(t) a_{k,j}^{2}(t,x), \label{est 263a}\\
& O_{2} \triangleq O_{21} + O_{22} \hspace{2mm} \text{ where } \hspace{2mm} O_{21} \triangleq  \sum_{j,k}  \tilde{\mathcal{L}}_{j,k}^{(1)}  \hspace{1mm} \text{ and } O_{22} \triangleq  \sum_{j,k}  \tilde{\mathcal{L}}_{j,k}^{(2)}. \label{est 263b}
\end{align}
\end{subequations} 
Importantly, analogous computations to \cite[Equation (213)]{Y23a} go through similarly to show 
\begin{align}
O_{1} \overset{\eqref{est 263a}}{=} & \sum_{j} \chi_{j}^{2} \mathring{R}_{q,j} + \frac{ \gamma_{2} \lambda_{q+1}^{2-\gamma_{2}}}{2}  \sum_{j,k}  k^{\bot} \otimes k^{\bot} \chi_{j}^{2} a_{k,j}^{2} - \frac{\gamma_{2} \lambda_{q+1}^{2-\gamma_{2}}}{2}  \sum_{j,k}  \chi_{j}^{2} \Id a_{k,j}^{2} \nonumber \\
\overset{\eqref{est 351}}{=}& \sum_{j} \chi_{j}^{2} \lambda_{q+1}^{2- \gamma_{2}} [ \frac{ \mathring{R}_{q,j}}{\lambda_{q+1}^{2- \gamma_{2}}}  \nonumber \\
&+ \frac{\gamma_{2}}{2} \sum_{k}  k^{\bot} \otimes k^{\bot}  \frac{M_{0} (\tau_{q+1} j)}{\gamma_{2}} \delta_{q+1} \gamma_{k}^{2} \left( \Id - \frac{ \mathring{R}_{q,j}}{\lambda_{q+1}^{2-\gamma_{2}} \delta_{q+1}  M_{0} (\tau_{q+1} j)} \right) - \frac{\gamma_{2}}{2} \sum_{k} \Id a_{k,j}^{2} ]  \nonumber \\
&\overset{\eqref{est 265}}{=} \sum_{j} \chi_{j}^{2}(t) \lambda_{q+1}^{2- \gamma_{2}} \Id \left[ M_{0}(\tau_{q+1} j) \delta_{q+1} - \frac{\gamma_{2}}{2} \sum_{k} a_{k,j}^{2} \right] \label{est 459} 
\end{align}
so that $\mathring{O}_{1}$ vanishes as desired, leading us to simplify \eqref{est 262} as 
\begin{equation}\label{est 269} 
R_{O, \text{low}} = \mathring{O}_{21} + \mathring{O}_{22}
\end{equation} 
We now start our estimates. First, we observe that 
\begin{equation*}
D_{t,q} (\mathring{R}_{l} - \mathring{R}_{q,j}) = D_{t,q} \mathring{R}_{l} \hspace{1mm} \text{ and } \hspace{1mm} (\mathring{R}_{l} - \mathring{R}_{q,j} )(\tau_{q+1} j, x) = 0
\end{equation*}
due to \eqref{est 355} so that we may apply \eqref{est 197a} to estimate for all $t \in \supp \chi_{j}$
\begin{align}
 \lVert (\mathring{R}_{l} - \mathring{R}_{q,j})(t) & \rVert_{C_{x}} \overset{\eqref{est 349}}{\leq}  \tau_{q+1} \sup_{s \in [\tau_{q+1} j, t]} (\lVert \partial_{s} \mathring{R}_{l} (s) \rVert_{C_{x}} + \lvert \Upsilon_{l}(s) \rvert \lVert \Lambda^{2-\gamma_{2}} y_{l} (s) \rVert_{C_{x}} \lVert \nabla \mathring{R}_{l} (s) \rVert_{C_{x}}) \nonumber \\
\overset{\eqref{est 318} \eqref{est 317} \eqref{est 32}}{\lesssim}& \tau_{q+1} l^{-1} M_{0}(t) \lambda_{q+1}^{2-\gamma_{2}} \delta_{q+1} (1+ a^{L^{2} + b^{q}(3 - \gamma_{2} - \beta - b \alpha) } )  \lesssim \tau_{q+1} l^{-1} M_{0}(t) \lambda_{q+1}^{2-\gamma_{2}} \delta_{q+1} \label{est 383}
\end{align}
where the last inequality relied on 
\begin{align}
b>  \frac{L^{2} + 3 - \gamma_{2} - \beta}{\alpha} \label{est 470} 
\end{align}
due to \eqref{est 446}. We emphasize that this is one of the crucial parts where our appropriate modifications from the deterministic convex integration scheme of \cite{BSV19} plays a crucial role (recall Remark \ref{Remark 2.1} (2)). We can attain \eqref{est 383} only because we have $\lVert \partial_{s} \mathring{R}_{l}(s) \rVert_{C_{x}}$; if that were $\lVert \partial_{s} \mathring{R}_{q}(s) \rVert_{C_{x}}$ as in the deterministic case, it would be impossible because our Reynolds stress is not differentiable in time. It follows that 
\begin{equation}\label{est 387} 
\lVert R_{O, \text{approx}} \rVert_{C_{t,x,q+1}} 
\overset{\eqref{est 259}}{\leq} \sum_{j} \lVert 1_{\supp \chi_{j}} (\mathring{R}_{l} - \mathring{R}_{q,j}) \rVert_{C_{t,x,q+1}} 
\overset{\eqref{est 383}}{\lesssim}   \tau_{q+1} l^{-1} M_{0}(t) \lambda_{q+1}^{2-\gamma_{2}} \delta_{q+1}.
\end{equation} 
We can estimate 
\begin{align*}
& \lVert O_{21} \rVert_{C_{t,x,q+1}} \overset{\eqref{est 263b}}{=} \lVert   \sum_{j,k} \tilde{\mathcal{L}}_{j,k}^{(1)}   \rVert_{C_{t,x,q+1}} \\
\overset{\eqref{est 258}\eqref{est 418}}{\lesssim}&  \lVert \sum_{j,k}  \frac{\gamma_{2}}{2} \chi_{j}^{2} \lambda_{q+1}^{1-\gamma_{2}} \int_{0}^{1}\int_{0}^{1}\int\int_{\mathbb{R}^{2}\times \mathbb{R}^{2}}  (\mathcal{K}_{k,r,\bar{r}}^{(1)})^{ml} (x- z, x - \tilde{z})\cdot \nabla (a_{k,j} \psi_{q+1, j, k}) (z) \\
& \hspace{10mm}  \times (a_{k,j} \psi_{q+1,j,-k}) (\tilde{z}) dz d \tilde{z} dr d \bar{r} \rVert_{C_{t,x,q+1}} \overset{\eqref{est 267} \eqref{est 424} \eqref{est 360c}}{\lesssim}  \lambda_{q+1}^{1-\gamma_{2}} \lambda_{q} \delta_{q+1} M_{0}(t)
\end{align*} 
so that due to similarity of $O_{21}$ and $O_{22}$, we obtain 
\begin{equation}\label{est 385}  
\lVert R_{O, \text{low}} \rVert_{C_{t,x,q+1}} \overset{\eqref{est 269}}{\leq} \lVert O_{21} \rVert_{C_{t,x,q+1}} + \lVert O_{22} \rVert_{C_{t,x,q+1}} \lesssim \lambda_{q+1}^{1-\gamma_{2}} \lambda_{q} \delta_{q+1} M_{0}(t). 
\end{equation} 
Next, concerning $R_{O,\text{high}}$ in \eqref{est 386} we can define 
\begin{subequations}\label{est 272} 
\begin{align}
 O_{3} \triangleq& \mathcal{B} \tilde{P}_{\approx \lambda_{q+1}}\label{est 272a} \\
 & \times  \left( \sum_{j, j', k, k': k + k' \neq 0} (\Lambda^{2- \gamma_{2}} \mathbb{P}_{q+1, k} [\chi_{j} a_{k,j} b_{k} (\lambda_{q+1} \Phi_{j})]) \cdot \nabla \mathbb{P}_{q+1, k'} [\chi_{j'} a_{k',j'} b_{k'} (\lambda_{q+1} \Phi_{j'})] \right), \nonumber \\
 O_{4} \triangleq& \mathcal{B} \tilde{P}_{\approx \lambda_{q+1}}  \label{est 272b}  \\
 & \times  \left( \sum_{j, j', k, k': k + k' \neq 0}  (\nabla \mathbb{P}_{q+1, k} [\chi_{j} a_{k,j} b_{k} (\lambda_{q+1} \Phi_{j})])^{T} \cdot \Lambda^{2-\gamma_{2}} \mathbb{P}_{q+1, k'} [\chi_{j'} a_{k',j'} b_{k'} (\lambda_{q+1} \Phi_{j'})] \right),\nonumber 
\end{align}
\end{subequations} 
so that \eqref{est 386} gives us  
\begin{equation}\label{est 271} 
R_{O, \text{high}} = O_{3} - O_{4}. 
\end{equation} 
Similarly to \cite[Equations (221)--(224) and (227)]{Y23a}, we can split 
\begin{equation}\label{est 286} 
O_{3} = \sum_{k=1}^{3} O_{3k} 
\end{equation} 
where 
\begin{subequations}\label{est 275} 
\begin{align}
O_{31}(x) \triangleq& \mathcal{B} \tilde{P}_{\approx \lambda_{q+1}} \lambda_{q+1}^{2 - \gamma_{2}} \sum_{j, j',k,k': k + k'\neq 0}  \label{est 275a}  \\
&  \times \divergence ( [\chi_{j} a_{k,j} b_{k} (\lambda_{q+1} \Phi_{j})] \otimes [\chi_{j'} a_{k',j'} b_{k'} (\lambda_{q+1} \Phi_{j'})])(x),\nonumber \\
O_{32}(x) \triangleq& \mathcal{B} \tilde{P}_{\approx \lambda_{q+1}} \lambda_{q+1}^{2 - \gamma_{2}} \sum_{j, j',k, k': k + k'\neq 0}  \label{est 275b} \\
& \times  \divergence ( [\chi_{j} a_{k,j} b_{k} (\lambda_{q+1} \Phi_{j})](x) \otimes \chi_{j'} [\mathbb{P}_{q+1, k'}, a_{k', j'}(x) \psi_{q+1, j', k'}(x)] b_{k'} (\lambda_{q+1} x) ), \nonumber \\
O_{33}(x) \triangleq& \mathcal{B} \tilde{P}_{\approx \lambda_{q+1}}  \sum_{j, j', k, k': k + k'\neq 0}  \label{est 275c}\\
&  \times \divergence (\chi_{j} [\mathbb{P}_{q+1,k} \Lambda^{2-\gamma_{2}}, a_{k, j}(x) \psi_{q+1, j, k}(x)] b_{k} (\lambda_{q+1} x) \otimes \mathbb{P}_{q+1, k'} [\chi_{j'} a_{k',j'} b_{k'} (\lambda_{q+1} \Phi_{j'})](x)), \nonumber 
\end{align}
\end{subequations} 
and furthermore 
 \begin{equation}\label{est 280}
O_{31} = O_{311} + O_{312}
\end{equation} 
where 
\begin{subequations}
\begin{align}
&O_{311}(x)  \triangleq \frac{1}{2} \mathcal{B} \tilde{P}_{\approx \lambda_{q+1}} \lambda_{q+1}^{2- \gamma_{2}} \sum_{j, j', k, k': k + k' \neq 0} \chi_{j} \chi_{j'} \nabla (a_{k,j} (x) a_{k', j'} (x) \psi_{q+1, j', k'}(x) \psi_{q+1, j, k}(x))  \nonumber \\
& \hspace{35mm} \times (b_{k}(\lambda_{q+1} x) \cdot b_{k'} (\lambda_{q+1} x) - e^{i(k+k') \cdot \lambda_{q+1} x}),  \label{est 279} \\
& O_{312}(x) \triangleq  \mathcal{B} \tilde{P}_{\approx \lambda_{q+1}} \lambda_{q+1}^{2-\gamma_{2}} \sum_{j,j',k,k': k + k' \neq 0} \chi_{j} \chi_{j'}  \nonumber\\
& \hspace{10mm} \times b_{k'}(\lambda_{q+1} x) \otimes b_{k} (\lambda_{q+1} x) \nabla (a_{k,j} (x) a_{k', j'} (x) \psi_{q+1, j', k'} (x) \psi_{q+1, j, k} (x)).\label{est 278b}
\end{align}
\end{subequations}
Therefore, similarly to \cite[Equations (228)--(230)]{Y23a} we can bound using \cite[Equation (A.17)]{BSV19} for $O_{33}$, 
\begin{equation}\label{est 451}
\lVert O_{3} \rVert_{C_{t, x, q+1}}  \overset{\eqref{est 286} \eqref{est 280}}{\leq} \sum_{k=1}^{2} \lVert O_{31k} \rVert_{C_{t,x,q+1}} +  \sum_{k=2}^{3} \lVert O_{3k} \rVert_{C_{t,x,q+1}} \lesssim  \lambda_{q+1}^{1-\gamma_{2}} \lambda_{q} \delta_{q+1}M_{0}(t).  
\end{equation} 
Next, we can define 
\begin{subequations}\label{est 282} 
\begin{align}
&O_{41}(x) \triangleq \sum_{j, j', k, k': k + k' \neq 0}  \mathcal{B} \tilde{P}_{\approx \lambda_{q+1}} \label{est 282a} \\
&\hspace{5mm}  \times  (\chi_{j} \nabla ( [ \mathbb{P}_{q+1, k}, a_{k,j} (x) \psi_{q+1, j, k} (x) ] b_{k} (\lambda_{q+1} x) )^{T} \cdot \lambda_{q+1}^{2-\gamma_{2}} [\chi_{j'} a_{k',j'} b_{k'} (\lambda_{q+1} \Phi_{j'})](x)), \nonumber  \\
&O_{42}(x) \triangleq \sum_{j, j', k, k': k + k' \neq 0}  \mathcal{B} \tilde{P}_{\approx \lambda_{q+1}} \label{est 282b}   \\
& \hspace{5mm} \times ((\nabla \mathbb{P}_{q+1, k} [\chi_{j} a_{k,j} b_{k} (\lambda_{q+1} \Phi_{j})] (x))^{T} \cdot \chi_{j'} [\mathbb{P}_{q+1, k'} \Lambda^{2- \gamma_{2}}, a_{k', j'}(x) \psi_{q+1, j', k'} (x)] b_{k'} (\lambda_{q+1} x) ) \nonumber 
\end{align}
\end{subequations} 
so that $O_{4}$ from \eqref{est 272b} satisfies  
\begin{equation}\label{est 283} 
O_{4} = O_{41} + O_{42}
\end{equation} 
similarly to \cite[Equations (231)--(232)]{Y23a}. Applying \cite[Equation (A.17)]{BSV19} to $O_{41}$ gives us 
\begin{align}
& \lVert O_{41} \rVert_{C_{t,x,q+1}} \overset{\eqref{est 282a}}{\lesssim} \lambda_{q+1}^{-1} \sum_{j, j', k, k': k + k' \neq 0} [ \lambda_{q+1}^{-1} \lVert 1_{\supp \chi_{j}} \nabla^{2} (a_{k,j} \psi_{q+1, j, k}) \rVert_{C_{t,x,q+1}} \lVert b_{k} (\lambda_{q+1} x) \rVert_{C_{t,x,q+1}}\nonumber  \\
& \hspace{25mm} + \lambda_{q+1}^{-1} \lVert 1_{\supp \chi_{j}} \nabla (a_{k,j} \psi_{q+1, j, k} ) \rVert_{C_{t,x,q+1}} \lVert \nabla b_{k} (\lambda_{q+1} x) \rVert_{C_{t,x,q+1}} ] \lambda_{q+1}^{2-\gamma_{2}} \lVert a_{k', j'} \rVert_{C_{t,x,q+1}} \nonumber \\
&\hspace{5mm} \overset{\eqref{est 424} \eqref{est 360}}{\lesssim} \lambda_{q+1}^{1- \gamma_{2}} \lambda_{q} \delta_{q+1} M_{0}(t).\label{est 284} 
\end{align}
Similarly, we can estimate from \eqref{est 282b} by \cite[Equation (A.17)]{BSV19}, \eqref{est 424}, and \eqref{est 360}
\begin{equation}\label{est 285}
\lVert  O_{42} \rVert_{C_{t,x,q+1}} \lesssim \lambda_{q+1}^{1-\gamma_{2}} \lambda_{q} \delta_{q+1} M_{0}(t). 
\end{equation}
In sum, we conclude 
\begin{equation}\label{est 290}
\lVert O_{4} \rVert_{C_{t,x,q+1}} \overset{\eqref{est 283}}{\leq} \sum_{k=1}^{2} \lVert O_{4k} \rVert_{C_{t,x,q+1}} \overset{\eqref{est 284} \eqref{est 285}}{\lesssim} \lambda_{q+1}^{1-\gamma_{2}} \lambda_{q} \delta_{q+1} M_{0}(t).
\end{equation} 
Therefore, we showed that 
\begin{equation}\label{est 294}
\lVert R_{O,\text{high}} \rVert_{C_{t,x,q+1}} \overset{\eqref{est 271}}{\leq} \lVert O_{3} \rVert_{C_{t,x,q+1}} + \lVert O_{4} \rVert_{C_{t,x,q+1}} \overset{\eqref{est 451} \eqref{est 290}}{\lesssim} \lambda_{q+1}^{1-\gamma_{2}} \lambda_{q} \delta_{q+1} M_{0}(t).
\end{equation} 
At last, we are able to verify \eqref{est 307}: for $a \in 5 \mathbb{N}$ sufficiently large and $\beta > 1 - \frac{\gamma_{2}}{2}$ sufficiently close to $1- \frac{\gamma_{2}}{2}$, 
\begin{align*}
&e^{2L^{\frac{1}{4}}}\lVert R_{O} \rVert_{C_{t,x,q+1}} \overset{\eqref{est 291}}{\leq}  e^{2L^{\frac{1}{4}}} \left(\lVert R_{O, \text{approx}} \rVert_{C_{t,x,q+1}} + \lVert R_{O, \text{low}} \rVert_{C_{t,x,q+1}} + \lVert R_{O, \text{high}} \rVert_{C_{t,x,q+1}} \right) \nonumber \\
& \hspace{5mm} \overset{\eqref{est 387} \eqref{est 385}  \eqref{est 294}}{\lesssim} e^{2L^{\frac{1}{4}}} \left( \tau_{q+1} l^{-1} M_{0}(t) \lambda_{q+1}^{2- \gamma_{2}} \delta_{q+1} + \lambda_{q+1}^{1- \gamma_{2}} \lambda_{q} \delta_{q+1} M_{0}(t)\right) \nonumber \\
& \hspace{5mm} \overset{\eqref{est 210}\eqref{est 32}}{\lesssim}     M_{0}(t) \lambda_{q+2}^{2- \gamma_{2}} \delta_{q+2} [ a^{b^{q} [ b^{2} (-2 + \gamma_{2} + 2 \beta) + b( \frac{\alpha}{2} + \frac{1-\gamma_{2}}{2} - \frac{3\beta}{2} )]} + a^{b^{q} [ b^{2} (-2 + \gamma_{2} + 2 \beta) + b(1- \gamma_{2} - 2 \beta) + 1]} ] a^{L} \nonumber \\ 
& \hspace{15mm} \ll M_{0}(t) \lambda_{q+2}^{2- \gamma_{2}} \delta_{q+2}, 
\end{align*}
where the last inequality used the fact that 
\begin{equation}\label{est 471} 
b> \frac{2L}{-\alpha - 1 + \gamma_{2} + 3 \beta} \hspace{1mm} \text{ and } \hspace{1mm}  b > \frac{1+ L}{-1 + \gamma_{2} + 2 \beta}
\end{equation} 
due to \eqref{est 446}. 

\section*{Acknowledgments}
The author expresses deep gratitude to Prof. Adam Larios, Mr. Elliott Walker, and Prof. Yao Yao for valuable discussions.\\

\noindent \textbf{Conflict of interest} The author declares that he has no known competing financial interests or personal relationships that could have appeared to influence the work reported in this work.\\

\noindent \textbf{Data availability}  This work has no associated data. \\

\noindent \textbf{Funding} This work was supported by the Simons Foundation (962572, KY).

\end{document}